\newcommand{\EE}{\mathbb{ E}}
\newcommand{\PP}{\mathbb{P}}
\newcommand{\R}{\mathbb{R}}
\newcommand{\C}{\mathbb{C}}
\newcommand{\Q}{\mathbb{Q}}
\newcommand{\HH}{\mathbb{H}}
\newcommand{\N}{\mathbb{N}}
\newcommand{\D}{\mathbb{D}}
\newcommand{\Z}{\mathbb{Z}}
\newcommand{\TT}{\mathbb{T}}
\newcommand{\pa}{\partial}
\newcommand{\F}{{\cal F}}
\def\eps{\varepsilon}
\def\til{\widetilde}
\def\ha{\widehat}
\def\sem{\setminus}
\def\lin{\overline}
\def\ulin{\underline}
\def\L{{\cal L}}
\def\conf{\stackrel{\rm Conf}{\twoheadrightarrow}}
 \DeclareMathOperator{\diam}{diam}
\DeclareMathOperator{\dist}{dist} \DeclareMathOperator{\dcap}{dcap}
 \DeclareMathOperator{\id}{id}
\DeclareMathOperator{\Imm}{Im }
\DeclareMathOperator{\mA}{m} 
 \DeclareMathOperator{\cc}{c}
\DeclareMathOperator{\bb}{b} \DeclareMathOperator{\doub}{doub}
\theoremstyle{plain}
\newtheorem{Theorem}{Theorem}[section]
\newtheorem{Lemma}[Theorem]{Lemma}
\newtheorem{Corollary}[Theorem]{Corollary}
\newtheorem{Proposition}[Theorem]{Proposition}
\theoremstyle{definition}
\newtheorem{Definition}[Theorem]{Definition}
\newtheorem{Remark}[Theorem]{Remark}
\numberwithin{equation}{section}
\newcommand{\BGE}{\begin{equation}}
\newcommand{\BGEN}{\begin{equation*}}
\newcommand{\EDE}{\end{equation}}
\newcommand{\EDEN}{\end{equation*}}
\begin{document}
\title{Two-curve Green's function for $2$-SLE: the interior case}
\author{Dapeng Zhan\thanks{Research partially supported by NSF grants  DMS-1056840 and DMS-1806979.}}
\affil{Michigan State University}
\maketitle

\begin{abstract}
  A $2$-SLE$_\kappa$ ($\kappa\in(0,8)$) is a pair of random curves $(\eta_1,\eta_2)$ in a simply connected domain $D$  connecting two pairs of boundary points  such that conditioning on any curve, the other is a chordal SLE$_\kappa$ curve in a complement domain. In this paper we prove that for any $z_0\in D$, the limit $\lim_{r\to 0^+}r^{-\alpha_0} \PP[\dist(z_0,\eta_j)<r,j=1,2]$, where $\alpha_0=\frac{(12-\kappa)(\kappa+4)}{8\kappa}$, exists. Such limit is called a two-curve Green's function. We find the convergence rate and the exact formula of the Green's function in terms of a hypergeometric function up to a multiplicative constant. For $\kappa\in(4,8)$, we also prove the convergence of $\lim_{r\to 0^+}r^{-\alpha_0} \mathbb{P}[\dist(z_0,\eta_1\cap \eta_2)<r]$, whose limit is a constant times the previous Green's function. To derive these results, we work on two-time-parameter stochastic processes, and use orthogonal polynomials to derive the transition density of a two-dimensional diffusion process that satisfies some system of SDE.
\end{abstract}

\section{Introduction}
The Schramm-Loewner evolution (SLE), first introduced by Oded Schramm in 1999 (\cite{S-SLE}), is a one-parameter ($\kappa\in(0,\infty)$) family of measures on non-self-crossing curves, which has received a lot of attention over the past two decades. 
It has been shown that, modulo time parametrization,  many discrete random paths on grids
have SLE with different parameters as their scaling limits. We refer the reader to Lawler's textbook \cite{Law-SLE} for basic properties of SLE.

One of the most important functions associated to SLE is the Green's function, which can be roughly defined as the scaling limit of the probability that an SLE curve hits a small disc around an interior or boundary point of its domain. The existence of chordal SLE   Green's function for an interior point was given in \cite{Law4}, where conformal radius was used instead of Euclidean distance. The existence of the original one-point Green's function (using Euclidean distance) was proved later in \cite{LR}. 
The existence of boundary point Green's function for chordal SLE was given in \cite{Mink-real}. Other related works include the Green's function for radial SLE (\cite{AKL}), multipoint Green's function for chordal SLE (\cite{LW,Mink-real,LR} for $2$-point, \cite{existence} for $n$-point), and Green's function for SLE$_\kappa(\rho)$ and hSLE (\cite{Vik-Green}).


A $2$-SLE$_\kappa$ (also called bi-chordal SLE$_\kappa$) is a pair of random curves  in a simply connected domain connecting two pairs of boundary points, which satisfy   that, when any one curve is given, the conditional law of the other curve is that of a chordal SLE$_\kappa$ curve in one complement domain of the first curve. It is a special case of multiple $N$-SLE$_\kappa$ (when $N=2$) studied in \cite{multiple}, and exists for all $\kappa\in(0,8)$ and any admissible link pattern. A $2$-SLE arises naturally as a scaling limit of some lattice model with alternating boundary conditions (\cite{Wu-hSLE,KS-hSLE}), as interacting flow lines in imaginary geometry (\cite{MS1,MS2}), and as two exploration curves of a CLE (\cite{MSW,MW-connection}).

Suppose $(\eta_1,\eta_2)$ is a $2$-SLE$_\kappa$  in a simply connected domain $D$, and $z_0\in \lin D$. Then the probability that both $\eta_1$ and $\eta_2$ visit a small disc centered at $z_0$ with radius $\eps$ tends to $0$ as $\eps\to 0$. It is expected that this probability decays like some power of $\eps$, and the rescaled probability tends to a nontrivial limit, which is called the two-curve Green's function for this $2$-SLE$_\kappa$. A similar object considered in \cite{Vik-Green}  is the rescaled probability that either $\gamma_1$ or $\gamma_2$ gets close to a given interior point. Their Green's function is a sum of two one-curve Green's functions for the $2$-SLE$_\kappa$, and is different from the one considered here. In this paper we focus on the interior point case, i.e., $z_0\in D$. In the subsequent paper \cite{Two-Green-boundary}, we will work on the boundary point case, which uses a similar approach.

Below is our first main theorem, which holds for all $\kappa\in(0,8)$.

\begin{Theorem}
  Let $\kappa\in(0,8)$. Let
  \BGE \alpha_0=\frac{(12-\kappa)(\kappa+4)}{8\kappa}>0.\label{alpha0}\EDE
  Let $F$ be the hypergeometric function $\,_2F_1(\frac 4\kappa,1-\frac 4\kappa;\frac 8\kappa,\cdot)$, which is known to be positive on $[0,1]$. Let $D$ be a simply connected domain  with four distinct boundary points (prime ends)  $a_1,b_1,a_2,b_2$ such that $a_1$ and $a_2$ together separate $b_1$ from $b_2$ on $\pa D$. Let $(\ha \eta_1,\ha \eta_2)$ be a $2$-SLE$_\kappa$ in $D$ with link pattern $(a_1, b_1;a_2, b_2)$. Let  $z_0\in D$, and  $f_{z_0}$ be the conformal map from $D$ onto $\D=\{|z|<1\}$ such that $f_{z_0}(z_0)=0$ and $f_{z_0}'(0)>0$.
   Let
  $$G_{D;a_1,b_1;a_2,b_2}(z_0):=4^{1-\frac{12}\kappa}|f'(z_0)|^{\alpha_0} \prod_{j=1}^2|f_{z_0}(a_j)-f_{z_0}(b_j)|^{\frac 8\kappa -1} \prod_{x\in\{a,b\}}|f_{z_0}(x_1)-f_{z_0}(x_2)|^{\frac 4\kappa } $$ $$\times F\Big(\frac{|f_{z_0}(a_1)-f_{z_0}(b_2)||f_{z_0}(a_2)-f_{z_0}(b_1)|}{|f_{z_0}(a_1)-f_{z_0}(a_2)||f_{z_0}(b_1)-f_{z_0}(b_2)|}\Big)^{-1}.$$
  Let  $\beta_0=\frac{2+\frac\kappa 8}{3+\frac \kappa 8}$. Let $R=\dist(z_0,\pa D)$.
  Then there is a constant $C_0>0$  depending only on $\kappa$ such that
\BGE \PP[\dist(z_0,\ha \eta_j)<r,j=1,2]= C_0G_{D;a_1,b_1;a_2,b_2}(z_0) r^{\alpha_0}\Big(1+O\Big(\Big(\frac{r}{R}\Big)^{\beta_0}\Big)\Big),\quad\mbox{as }   r\to 0^+.\label{main-est-1}\EDE
Here the implicit constants in the $O$ symbol depend only on $\kappa$. In particular, it implies that there is a constant $C_0' >0$ depending only on $\kappa$ such that
\BGE \PP[\dist(z_0,\ha \eta_j)<r,j=1,2]\le C_0' \Big(\frac rR\Big)^{\alpha_0},\quad \forall r>0.\label{main-est-2}\EDE \label{main-thm1}
\end{Theorem}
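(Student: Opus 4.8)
\emph{Step 1: conformal reduction.} A $2$-SLE$_\kappa$ is conformally invariant, and $G_{D;a_1,b_1;a_2,b_2}$ has been built so as to be covariant with exponent $\alpha_0$ — that is, $G_{D;a_1,b_1;a_2,b_2}(z_0)=|\phi'(z_0)|^{\alpha_0}G_{\phi(D);\phi(a_1),\phi(b_1);\phi(a_2),\phi(b_2)}(\phi(z_0))$ for every conformal $\phi$, since all the chord-length and cross-ratio factors are invariants of the image configuration in $\D$ while $|f_{z_0}'(z_0)|=\crad(z_0;D)^{-1}$ supplies the scaling. It therefore suffices to establish \eqref{main-est-1} when $D=\D$ and $z_0=0$, so $R\asymp 1$, first with $\dist(z_0,\ha\eta_j)<r$ replaced by the single-curve conditions $\crad(z_0;\D\setminus\ha\eta_j)<r$: the Koebe distortion theorem makes these scales comparable up to a bounded factor, and a regularity estimate — that on the relevant event each $\ha\eta_j$ reaches within conformal radius $\asymp r$ of $z_0$ through a boundedly distorted neighbourhood — upgrades the comparison to the factor $1+O(r^{\beta_0})$ needed in \eqref{main-est-1}. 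Given \eqref{main-est-1}, estimate \eqref{main-est-2} is immediate: it follows from \eqref{main-est-1} when $r/R$ is below a $\kappa$-dependent threshold, and is trivial (with a larger constant, as the left-hand side is $\le 1$) above it.

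\emph{Step 2: a two-time-parameter covariant martingale.} Grow $\ha\eta_1$ from $a_1$ and $\ha\eta_2$ from $a_2$ in radial parametrizations centered at $z_0$, let $D_{t_1,t_2}$ be the component of $D\setminus(\ha\eta_1[0,t_1]\cup\ha\eta_2[0,t_2])$ containing $z_0$, and follow, under the uniformizing map $D_{t_1,t_2}\to\D$ with $z_0\mapsto 0$, the images of the two curve tips and of the targets $b_1,b_2$, together with $\lambda_{t_1,t_2}=-\log\crad(z_0;D_{t_1,t_2})$ (increasing in each variable). By the domain Markov property of $2$-SLE, the asymptotic $\PP[\dist(z_0,\ha\eta_1)<r,\dist(z_0,\ha\eta_2)<r]\sim C_0\,r^{\alpha_0}G_{D;a_1,b_1;a_2,b_2}(z_0)$ amounts to the statement that
\[ M_{t_1,t_2}:=r^{\alpha_0}\,G_{D_{t_1,t_2};\,\ha\eta_1(t_1),b_1;\,\ha\eta_2(t_2),b_2}(z_0) \]
is a local martingale in each of $t_1,t_2$ (before disconnection). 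Proving this is an It\^o computation for the two coupled radial Loewner flows, relying on the commutation relations that make $2$-SLE$_\kappa$ well defined; requiring the drift to vanish pins down the exponent as $\alpha_0$ in \eqref{alpha0} and forces the non-power part of $G$ to satisfy the hypergeometric ODE for $\,_2F_1(\frac4\kappa,1-\frac4\kappa;\frac8\kappa;\cdot)$, hence to be $F$ (with $F>0$ on $[0,1]$ a classical fact). This is, in reverse, how the formula for $G$ is arrived at.

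\emph{Step 3: change of measure, ergodicity, and the exact constant.} Weighting $\PP$ by $M_{\cdot}/M_0$ (a Radon--Nikodym change of measure, with truncation to handle integrability) gives a law $\PP^*$ under which both curves are driven toward $z_0$; for an appropriate two-parameter stopping scheme — each curve grown to its own conformal-radius level $r$ — one arrives, in normalized form, at a formula of the type $\PP[\dist(z_0,\ha\eta_1)<r,\dist(z_0,\ha\eta_2)<r]=c(\kappa)\,r^{\alpha_0}G_D(z_0)\,\EE^*[e^{-\alpha_0 S_r}\Phi_r^{-1}]$, where $\Phi_r$ is the shape factor at the stopping time and $S_r\ge 0$ records the extra pinching of $z_0$ produced by the two curves together. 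Under $\PP^*$, after reparametrization the pair (shape, $S$) evolves as a time-homogeneous two-dimensional diffusion solving an explicit system of SDEs — the object referred to in the abstract. I would compute its generator, identify the invariant density, and diagonalize the generator by the appropriate (Jacobi-type) family of orthogonal polynomials; this yields a closed form for the transition density and, decisively, a spectral gap, so the $\PP^*$-law of $(\Phi_r,S_r)$ converges exponentially fast to equilibrium. Hence $\EE^*[e^{-\alpha_0 S_r}\Phi_r^{-1}]\to c_\kappa'\in(0,\infty)$ at rate $O(r^{\beta_0})$ — the exponent $\beta_0=\frac{2+\kappa/8}{3+\kappa/8}$ coming from the bottom of the spectrum after the radial-time gap is converted to the $r$-scale and combined with Step 1 — and together with Step 2 this gives \eqref{main-est-1} with $C_0=c(\kappa)c_\kappa'>0$.

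\emph{Main obstacle.} The crux is Step 3: correctly setting up the two-parameter time and the associated stopping argument for the simultaneous growth, and then producing the transition density of the tilted two-dimensional diffusion in closed enough form — via orthogonal polynomials — to extract a quantitative spectral gap, so that the error is genuinely $O((r/R)^{\beta_0})$ rather than merely $o(1)$. A recurring secondary difficulty is keeping every implicit constant dependent on $\kappa$ alone, which demands uniform control over all admissible link patterns and all positions of $a_1,b_1,a_2,b_2$ relative to $z_0$, including configurations in which a marked point is very close to $z_0$ and the shape factor degenerates. Finally, for $\kappa\in(4,8)$ the curves are non-simple, so the evolving domains, the stopping scheme, and the conformal-radius-versus-Euclidean comparison all require extra care — this being also the regime in which the companion statement for $\dist(z_0,\ha\eta_1\cap\ha\eta_2)$ becomes meaningful.
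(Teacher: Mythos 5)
Your outline follows essentially the same route as the paper: reduce to $D=\D$, $z_0=0$; identify $\crad(z_0;D_{t_1,t_2})^{-\alpha_0}$ times the shape function (forced by the hypergeometric ODE) as the two-parameter martingale whose normalization is the Radon--Nikodym derivative between the $2$-SLE$_\kappa$ law and a coupled law under which both curves converge to $z_0$; then grow the curves simultaneously so that the configuration becomes a two-dimensional diffusion whose transition density is diagonalized by Jacobi-type orthogonal polynomials, the spectral gap $2+\kappa/8$ yielding exactly $\beta_0=\frac{2+\kappa/8}{3+\kappa/8}$. The one load-bearing ingredient you leave unspecified --- and correctly flag as the crux --- is the second rule fixing the simultaneous parametrization: the paper requires the harmonic measure from $z_0$ of each boundary arc between $b_1$ and $b_2$ to stay equal to $1/2$, which via Beurling's estimate guarantees both curves are within distance comparable to $e^{-t}$ of $z_0$ and makes the reparametrized pair $(Z_1^u,Z_2^u)$ a time-homogeneous Markov process.
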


Below is our second main theorem, which makes sense only for $\kappa\in(4,8)$.

\begin{Theorem}
	Let $\kappa\in(4,8)$. We adopt the notation in the last theorem. Then there is a constant $C_1>0$  depending only on $\kappa$ such that
	$$\PP[\dist(z_0,\ha \eta_1\cap\ha \eta_2)<r]= C_1G_{D;a_1,b_1;a_2,b_2}(z_0) r^{\alpha_0}\Big(1+O\Big( \frac{r}{R} \Big)^{\beta_0}\Big),\quad\mbox{as }  r\to 0^+.$$ \label{main-thm2}
\end{Theorem}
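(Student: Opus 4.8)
The plan is to deduce Theorem \ref{main-thm2} from Theorem \ref{main-thm1} by a local analysis of the intersection set. Write $B_r=\{\dist(z_0,\ha\eta_j)<r,\ j=1,2\}$ and $A_r=\{\dist(z_0,\ha\eta_1\cap\ha\eta_2)<r\}$. Any point of $\ha\eta_1\cap\ha\eta_2$ within distance $r$ of $z_0$ lies on each $\ha\eta_j$, so $A_r\subseteq B_r$; Theorem \ref{main-thm1} then gives at once the upper bound $\limsup_{r\to0^+}r^{-\alpha_0}\PP[A_r]\le C_0 G_{D;a_1,b_1;a_2,b_2}(z_0)$, and the content of Theorem \ref{main-thm2} is the matching lower bound together with the existence and value of the limit. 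By the conformal covariance of the $2$-SLE$_\kappa$ law and of the functional $G_{D;a_1,b_1;a_2,b_2}$ (the reduction already used in the proof of Theorem \ref{main-thm1}) it suffices to treat $D=\D$, $z_0=0$, where $R=1$ and the error term reads $O(r^{\beta_0})$.

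I would then re-run the two-time-parameter Loewner framework from the proof of Theorem \ref{main-thm1}: grow $\ha\eta_1$ and $\ha\eta_2$ toward $z_0$, uniformize the remaining component containing $z_0$ back to $\D$ fixing $0$, and follow the associated two-dimensional diffusion together with $-\log\crad(z_0)$ of the remaining domain. Let $\sigma_r$ be the two-parameter stopping rule at which the conformal radius of $z_0$ first reaches order $r$; by Koebe distortion $B_r$ agrees, up to changing $r$ by a bounded factor, with $\{\sigma_r<\infty\}$, and at time $\sigma_r$ both tips lie within distance $\asymp r$ of $z_0$. The analysis underlying Theorem \ref{main-thm1} shows that, conditioned on $\sigma_r<\infty$, the law of the configuration near $z_0$ at time $\sigma_r$, rescaled by $1/r$, converges as $r\to0^+$ to an explicit universal limit at rate $O(r^{\beta_0})$ (governed by the spectral gap of the diffusion, whose transition density is computed via orthogonal polynomials); this is exactly what produces the factor $C_0 G_{D;a_1,b_1;a_2,b_2}(z_0)\,r^{\alpha_0}$ in \eqref{main-est-1}.

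The new input is to locate $A_r$ inside $B_r$ through this stopping rule. Conditionally on the full configuration at $\sigma_r$, consider the event that $\ha\eta_1$ and $\ha\eta_2$ subsequently acquire a common point within distance $r$ of $z_0$; after rescaling by $1/r$ this is an event about two SLE-type curves in a nearly universal bounded configuration. I would show: (i) its conditional probability is bounded below by some $c(\kappa)>0$ uniformly in small $r$ --- a one-step estimate suffices, using that conditionally on $\ha\eta_1$ the curve $\ha\eta_2$ is a chordal SLE$_\kappa$ with $\kappa\in(4,8)$ in a complement domain, which touches any prescribed boundary arc of comparable harmonic measure (here the piece of boundary within $\asymp r$ of $z_0$ coming from $\ha\eta_1$) with positive probability, the configuration being nondegenerate by the convergence above; and (ii) this conditional probability has a limit $p_0=p_0(\kappa)\in(0,1]$ as $r\to0^+$, at rate $O(r^{\beta_0})$, from the convergence of the rescaled configuration at $\sigma_r$ together with continuity of the touching probability as a functional of the limiting configuration (the threshold ``distance exactly $r$'' being attained with probability $0$ in the limit); equivalently, conditioned on $\sigma_r<\infty$, the pair formed by the rescaled configuration at $\sigma_r$ and the indicator $\mathbf 1_{A_r}$ converges jointly, with limiting indicator of mean $p_0$. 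Then $\PP[A_r]=(p_0+O(r^{\beta_0}))\,\PP[B_r]$, and \eqref{main-est-1} yields Theorem \ref{main-thm2} with $C_1=p_0C_0\in(0,C_0]$.

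The main obstacle is part (ii): upgrading the a priori bound of (i) to an actual limit, i.e.\ showing that the conditional probability of a \emph{genuine} nearby intersection becomes asymptotically independent of the macroscopic data. This requires the same strong-coupling estimates that drive the convergence of the diffusion in Theorem \ref{main-thm1}, now applied to the nonlocal functional ``$\ha\eta_1$ and $\ha\eta_2$ touch within distance $r$ of $z_0$'' rather than to a smooth function of the conformal coordinates; in particular one must rule out the two curves approaching $z_0$ arbitrarily often on ``opposite sides'' without ever touching nearby, which is where the hypothesis $\kappa\in(4,8)$ and the lower bound (i) enter, and which makes the case $\kappa\le4$ genuinely different, as there $A_r=\emptyset$.
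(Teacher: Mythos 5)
There is a genuine gap, and you have in fact pointed at it yourself: step (ii) of your argument --- that the conditional probability of a nearby intersection, given the configuration at the stopping rule $\sigma_r$, converges to a constant $p_0$ at rate $O(r^{\beta_0})$ --- is asserted, not proved. This is not a technical loose end but the entire content of the theorem along your route: you would need to show that the nonlocal functional ``the two future curves acquire a common point within distance $r$ of $z_0$'' is a sufficiently regular function of the rescaled configuration that the spectral-gap convergence of the diffusion transfers to it, including a quantitative continuity statement for the touching probability and a proof that the event $\{\dist(z_0,\ha\eta_1\cap\ha\eta_2)=r\}$ has asymptotically vanishing probability. None of the estimates established for Theorem \ref{main-thm1} apply to such a functional as stated; they concern the law of the finite-dimensional process $(Z_1^u,Z_2^u)$, not conditional laws of the full future configuration given $B_r$. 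Your step (i) (positivity, via the boundary-touching of chordal SLE$_\kappa$ for $\kappa\in(4,8)$) is correct and is exactly the new ingredient that is needed --- but not in the service of (ii).

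The way to close the argument without proving (ii) is to notice that the renewal structure in the proof of Theorem \ref{main-thm1} applies \emph{verbatim} to the intersection event: set $p(z_1,z_2;r):=\PP[\dist(0,\ha\eta_1\cap\ha\eta_2)<r]$ for the normalized configuration. Since $A_r\subset B_r\subset\{T^u>t_0\}$ for $t_0\le-\log(4r)$, and since any intersection point within distance $r$ of $0$ is necessarily an intersection of the two tails after time $\ulin u(t_0)$, whose $g_{\ulin u(t_0)}$-images form a new $2$-SLE$_\kappa$, the sandwich inequalities (\ref{p-condition}) hold with this new $p$, and the whole chain --- monotone comparison (\ref{Rr}), existence of $L=\lim r^{-\alpha_0}p(r)$, the quasi-invariant density, and the rate $O(r^{\beta_0})$ --- goes through unchanged. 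The only point that genuinely requires $\kappa\in(4,8)$ is that the averaged $p(r)$ is strictly positive, and that is precisely your observation (i): on the event that both curves visit $\{|z-z_0|<r\}$, the component $D_2$ of $D\sem\ha\eta_1$ containing $a_2,b_2$ has nonempty boundary inside that disc, and the conditional chordal SLE$_\kappa$ there hits that boundary piece with positive probability. So your proposal contains the right new ingredient but embeds it in a comparison of $\PP[A_r]$ with $\PP[B_r]$ that you cannot justify; rerunning the Theorem \ref{main-thm1} argument directly on $A_r$ avoids the problem entirely.
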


The exponent $\alpha_0$ appeared earlier in \cite{MW}: it agrees with the decay rate $A$ in \cite[Formula (4.13)]{MW}  with $\rho=0$, and equals $2$ minus the Hausdorff dimension of the double points of SLE$_\kappa$ (\cite[Theorem 1.1]{MW}). We expect that Theorem \ref{main-thm2} may be used to prove the existence of the $(2-\alpha_0)$-dimensional  Minkowski content of $\eta_1\cap\eta_2$, following the approach of \cite{LR}. For that purpose, one needs the existence of two-curve two-point Green's function, i.e., the rescaled probability that the two curves both get near two marked points.

Once the existence of the  Minkowski content of $\eta_1\cap\eta_2$ is established, one may further derive the existence of Minkowski content of double points of a single SLE$_\kappa$ curve. When $\kappa=6$, this is the analogue of the Minkowski content of the pivotal points in the critical lattice model (cf.\ \cite{GPS}).
The approach is related to another object called two-sided radial SLE$_\kappa$. A two-sided radial SLE$_\kappa$ curve grows in a simply connected domain, say $D$, from one marked boundary point, say $a_1$, passes through a marked interior point, say $z_0$, and ends at another marked boundary point, say $a_2$. The marked interior point $z_0$ breaks the curve into two arms, which satisfy the property that, given any arm the other arm is a chordal SLE$_\kappa$ in one complement domain of the given arm. So we may view the two arms as $2$-SLE$_\kappa$ in $D$ with link pattern $(a_1,z_0;a_2,z_0)$. This is not the same as the $2$-SLE$_\kappa$ studied in Theorems \ref{main-thm1} and \ref{main-thm2}. But we expect that the $(2-\alpha_0)$-dimensional  Minkowski content of the intersection of the two arms also exists.

To study the Minkowski content of double points of a chordal SLE$_\kappa$ curve, one may use the decomposition of chordal SLE (\cite{decomposition}): if one weights the law of a chordal SLE$_\kappa$ curve $\eta$ in $D$ from $a_1$ to $a_2$ by its $(1+\frac \kappa 8)$-dimensional Minkowski content, and samples a point $z_0$ on $\eta$ according to the Minkowski content measure, then one gets a measure on the pairs $(\eta,z_0)$, which agrees with that of the pair obtained by first sampling a random point $z_0$ in $D$ according to the chordal SLE$_\kappa$ Green's function in $D$ from $a_1$ to $a_2$, and then sampling a two-sided radial SLE$_\kappa$ in $D$ from $a_1$ to $a_2$ passing through $z_0$. Note that changing the law of $\eta$ does not affect the a.s.\ existence of the Minkowski content of double points of $\eta$. If $z$ is a double point of $\eta$ (under the new measure), then there are $t_1<t_2$ such that $\eta(t_1)=\eta(t_2)=z$. When the sampled point $z_0$ falls on the arc $\eta(t_1,t_2)$, the two parts of $\eta$ before $z_0$ and after $z_0$ form two arms of a two-sided radial SLE$_\kappa$ in $D$, and the double point $z$ lies in the intersection of the two arms.

Similar theorems also hold in the case that $z_0$ lies on the boundary, assuming that $\pa D$ is smooth near $z_0$ (\cite{Two-Green-boundary}), where the exponent $\alpha_0$ is replaced by another exponent: $\frac2\kappa(12-\kappa)$. That result may serve as the boundary estimate when one tries to derive the two-curve two-point Green's function for $2$-SLE$_\kappa$.

\begin{Definition}
We call  $G_{D;a_1,b_1;a_2,b_2}$ in Theorem \ref{main-thm1} the  two-curve Green's function for  $2$-SLE$_\kappa$  in $D$ with link pattern $(a_1,b_1;a_2,b_2)$.
\end{Definition}

\begin{Remark}  It is easy to derive the following properties of the  two-curve Green's function.
  \begin{itemize}
    \item[(i)] Using Koebe's $1/4$ Theorem and the boundedness of $F$ on $[0,1]$, we see that there is a constant $C>0$ depending only on $\kappa$ such that
    \BGE G_{D;a_1,b_1;a_2,b_2}(z_0)\le C \dist(z_0,\pa D)^{-\alpha_0}.\label{Green-bound}\EDE
    \item[(ii)] For $a_1,b_1,a_2,b_2$ in the definition, there is another admissible link pattern, which is $(a_1,b_2;a_2,b_1)$. It is easy to see that $\frac{G_{D;a_1,b_2;a_2,b_1}(z_0)}{G_{D;a_1,b_1;a_2,b_2}(z_0)}$ does not depend on $z_0$, but only on the cross-ratio of $a_1,b_1,a_2,b_2$ in $D$.
  \end{itemize}
\end{Remark}

The approach of the main theorems is somehow similar to that of the Green's function for a single chordal SLE$_\kappa$, where one parametrizes the curve according to the conformal radius viewed from the marked point and obtains an invariant measure on a process of harmonic measures. The conformal radius parametrization is convenient because on the one hand, it is comparable to the Euclidean distance by Koebe's $1/4$ theorem, and on the other hand, it is easier to do calculation using It\^o's calculus. Here is how it goes for the two-curve setting here. In order to apply It\^o's calculus, we choose a common single-time-parametrization for both of $\eta_1$ and $\eta_2$, which is equivalent to letting the two curves grow simultaneously. The success of the chordal SLE Green's function theory suggests us to choose a parametrization such that at any time $t$, the conformal radius of the remaining domain viewed from $z_0$ is $e^{-t}$. However,  this requirement does not uniquely determine the parametrization, and Koebe's $1/4$ theorem only tells us that the minimum of $\dist(z_0,\eta_j[0,t])$, $j=1,2$, is comparable to $e^{-t}$. It is not necessarily true that both curves get close to $z_0$ for big $t$. So we need a second rule to determine the parametrization, which should guarantee that the maximum of $\dist(z_0,\eta_j[0,t])$, $j=1,2$, is comparable to $e^{-t}$, and should also be convenient for calculation.

We find a second rule using the marked points $b_1$ and $b_2$. By conformal invariance, we may assume that $D$ is the unit disc $\D=\{|z|<1\}$ and $z_0$ is the center $0$. We may further reduce it to the case that $b_1$ and $b_2$ are opposite points on the circle, say $b_1=1$ and $b_2=-1$, by growing a part of $\eta_1$ or $\eta_2$ and mapping the remaining domain back to $\D$. Then we choose to grow $\eta_1$ and $\eta_2$ simultaneously  so that at any time $t$, (i) the conformal radius of the remaining domain viewed from $0$ is $e^{-t}$; and (ii) the harmonic measure in the remaining domain viewed from $0$ of any boundary arc bounded by $b_1$ and $b_2$ is $1/2$. These two rules uniquely determine the growth up to the time when either $\eta_1$ or $\eta_2$ finishes its journey or the two curves together disconnect $0$ from $b_1$ or $b_2$, and we stop at that time.
By Beurling's estimate, the second rule implies that $\dist(0,\eta_1[0,t])$ is comparable to $\dist(0,\eta_2[0,t])$ for each $t$ within the lifetime. Combining this with the estimate using Koebe's $1/4$ theorem, we then find that both $\dist(0,\eta_1[0,t])$ and $\dist(0,\eta_2[0,t])$ are comparable to $e^{-t}$

%

For any fixed time $t$,   on the event that the process does not end at time $t$, by the second rule we may choose $g_t$ which maps the remaining domain conformally onto $\D$ and fixes $0,1,-1$. Then $g_t(\eta_j(t))$, $j=1,2$, are two points on $\pa\D$, which are separated by $b_1=1$ and $b_2=-1$.
We then study the growth of the two-dimensional Markov process $(Z_1(t) ,Z_2(t) )$ in $(0,\pi)^2$, where $Z_j(t)=\arg(g_t(\eta_j(t))/b_j)$. Using a framework of two-parameter martingales, we are able to show that $(Z_1,Z_2)$ is a semi-martingale, and derive the system of SDEs for them.

Then we follow the approach of \cite[Appendix B]{tip} and use orthogonal two-variable polynomials to derive the explicit transition density for this Markov process. Here we use the known fact that each of $\eta_1$ and $\eta_2$ is a hypergeometric SLE (\cite{Wu-hSLE}). Using the transition density, we find that $(Z_1,Z_2)$ has a quasi-invariant measure, say $\mu_*^\#$ on  $(0,\pi)^2$, which means that if we start $(Z_1,Z_2)$ from a random point with law $\mu_*^\#$, then for any deterministic $t>0$, the probability that the process survives at time $t$ is $e^{-\alpha_0 t}$, and the law of $(Z_1(t),Z_2(t))$ conditional on this event is still $\mu_*^\#$. Furthermore, if we start $(Z_1,Z_2)$ from any deterministic point, then the conditional distribution of $(Z_1(t),Z_2(t))$ approaches exponentially to $\mu_*^\#$. With this quasi-invariant measure in hand, the remaining part of the proofs of the main theorems are finished by using Koebe's distortion theorem.

The technique of this paper also works for the boundary Green's function for $2$-SLE$_\kappa$ (\cite{Two-Green-boundary}), where the marked point $z_0$ lies on the boundary, and we can derive an estimate similar to Theorem \ref{main-thm1}. We may apply the technique to study the $n$-curve Green's function of $n$-SLE$_\kappa$, $n\ge 3$, when the link pattern satisfy that none of the curves separates any two other curves. In this case we may assume $D=\D$, $z_0=0$, and $a_1,b_1,\dots,a_n,b_n$ are ordered clockwise on $\pa\D$. Then we grow the $n$ curves from $a_1,\dots,a_n$ simultaneously such that the conformal radius at any time $t$ equals $e^{-t}$, and for any $1\le k\le n$, the harmonic measure of the boundary arc between $b_{k-1}$ and $b_{k}$  stays constant. These two rules guarantee that at any time $t$, each $\dist(0,\eta_k[0,t])$ is comparable to $e^{-t}$, and we may choose $g_t$ which maps the remaining domain conformally onto $\D$ and fixes $0,b_1,\dots,b_n$. Then we obtain an $n$-dimensional process $(Z_1,\dots,Z_n)$, where $Z_k(t)=\arg(g_t(\eta_k(t))/b_k)$. When we have the explicit formula for the marginal law of each curve in the $n$-SLE$_\kappa$, which is known so far only for $\kappa=2$ (\cite[Theorem 4.1]{KKP17}) and $\kappa=4$ (\cite[Theorem 1.5]{PW17}), we can obtain the transition density and quasi-invariant density for the $Z$-process, which can be used to conclude the existence of the Green's function.

The rest of the paper is organized as follows. In Sections \ref{section-Loewner} and \ref{section-hSLE}, we review Loewner equations, SLE, $2$-SLE, and hypergeometric SLE. In Section \ref{section-two-parameter} we develop a framework on stochastic processes that depend on two time parameters. In Section \ref{section-Ensemble} we describe the interaction between two radial Loewner chains, whose chordal counterpart appeared earlier in the works on the reversibility and duality of SLE (\cite{reversibility,duality}). The essential new stuff starts from Section \ref{section-Time-curve}, in which we grow the two curves in a $2$-SLE simultaneously as described above and derive the SDEs for the  process $(Z_1(t),Z_2(t))$. In Section \ref{section-transition-density} we derive the transition density and quasi-invariant density of this process. In the last section, we finish the proofs.

\section*{Acknowledgments}
The author thanks Xin Sun for suggesting this problem and inspiring discussions on the project. The final part of the paper was finished when the author attended the conference ``Random Conformal Geometry and Related Fields'' held by KIAS. The author thanks the participants of the conference for valuable comments on the paper.


\section{Preliminary} \label{section-prel}
\subsection{Loewner equations, SLE and $2$-SLE} \label{section-Loewner}
In this subsection, we recall the definitions of Loewner equations, SLE and $2$-SLE. Let $\HH$ denote the upper half-plane $\{z\in\C:\Imm z>0\}$. Let $\D$ and $\TT$ denote the unit disc $\{z\in\C:|z|<1\}$ and its boundary, respectively. We use $\cot_2,\tan_2,\sin_2,\cos_2$ to denote the functions $\cot(\cdot/2),\tan(\cdot/2),\sin(\cdot/2),\cos(\cdot/2)$, respectively.

We will extensively use radial Loewner equation in the paper. For the definition, we start with hulls in $\D$.
A set $K\subset\D$ is called a $\D$-hull if $\D\sem K$ is a simply connected domain that contains $0$. For a $\D$-hull $K$, there is a unique conformal map $g_K$ from $\D\sem K$ onto $\D$ such that $g_K(0)=0$ and $g_K'(0)>0$. By Schwarz Lemma, $g_K'(0)\ge 1$, and the equality holds only when $K=\emptyset$. By Schwarz reflection principle, we may view $g_K$ as a conformal map from $\C\sem K^{\doub}$ onto $\C\sem S_K$, where $K^{\doub}$ is the union of the closure of $K$ and the reflection of $K$ about $\TT$, i.e., $\{1/\lin z: z\in K\}$, and $S_K$ is a compact subset of $\TT$. 
Let $\dcap(K):=\log(g_K'(0))\ge 0$ be called the $\D$-capacity of $K$.
If $K_1\subset K_2$ are two $\D$-hulls, then we define $K_2/K_1:=g_{K_1}(K_2\sem K_1)$, which is also a $\D$-hull, and satisfies $\dcap(K_2/K_1)=\dcap(K_2)-\dcap(K_1)$. 

Let $\ha w \in C([0,T),\R)$ for some $T\in(0,\infty]$. The radial Loewner equation driven by $\ha w$  is
$$\pa_t g_t(z)= g_t(z)\cdot\frac{e^{i\ha w(t)}+g_t(z)}{e^{i\ha w(t)}-g_t(z)},\quad 0\le t<T;\quad g_0(z)=z.$$
For each $t\in[0,T)$, let $K_t$ be the set of $z\in\D$ such that the solution $g_{\cdot}(z)$ blows up before or at $t$ (so that $g_t$ is well defined on $\D\sem K_t$). Then we call $g_t$ and $K_t$ the radial Loewner maps and hulls, respectively, driven by $\ha w$. It turns out that, for each $t$, $K_t$ is a $\D$-hull with $\dcap(K_t)=t$, and $g_{K_t}=g_t$. If for every $t\in[0,T)$, $g_t^{-1}$ as a conformal map from $\D$ onto $\D\sem K_t$ extends continuously to $\lin\D$, and $\eta(t):=g_t^{-1}(e^{i\ha w(t)})$, $0\le t<T$, is continuous in $t$, then we say that $\eta$ is a radial Loewner curve driven by $\ha w$. Such $\eta$ may not exist in general; when it exists, the hulls $(K_t)$ are generated by $\eta$ in the sense that for every $t$, $\D\sem K_t$ is the connected component of $\D\sem \eta([0,t])$ that contain $0$. 

Let $\ha w$ be as above. Let $u$ be a continuous and strictly increasing function defined on $[0,T)$ with $u(0)=0$. Suppose that the two families $g^u_t$ and $K^u_t$, $0\le t<T$, satisfy that $g^u_{u^{-1}(t)}$ and $K^u_{u^{-1}(t)}$, $0\le t<u(T)$, are radial Loewner maps and hulls, respectively, driven by $\ha w\circ u^{-1}$. Then we say that $g^u_t$ and $K^u_t$, $0\le t<T$, are radial Loewner maps and hulls, respectively, driven by $\ha w$ with speed $du$. If $u$ is absolutely continuous, then we say that the speed is $u'$. 

The following lemma is  well known, and has appeared in the literature in different forms. 

\begin{Lemma}
	Suppose $K_t$, $0\le t<T$, are radial Loewner hulls driven by some $\ha w\in C([0,T),\R)$. Let $L$ be a $\D$-hull such that $\lin L\cap \lin{K_t}  =\emptyset$ for all $t\in[0,T)$. Then for any $t\in[0,T)$, $g_{K_t}(L)$ is a $\D$-hull that has positive distance from $e^{i\ha w(t)}$, so that $g_{g_{K_t}(L)}$ is analytic at $e^{i\ha w(t)}$; and $g_L(K_t)$, $0\le t<T$, are radial Loewner hulls driven by some $\ha w^L\in C([0,T),\R)$ with speed $|g_{g_{K_t}(L)}'(e^{i\ha w(t)})|^2$, where $\ha w^L$ satisfies $e^{i\ha w^L(t)}=g_{g_{K_t}(L)}(e^{i\ha w(t)})$, $0\le t<T$. \label{Loewner-KL}
\end{Lemma}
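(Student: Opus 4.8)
\emph{Proof plan.} The plan is to run the classical ``conformal image of a Loewner chain'' argument: everything reduces to a commutation identity obtained by factoring $g_{K_t\cup L}$ in two ways, and then differentiating that identity in $t$. Throughout write $L_t:=g_{K_t}(L)$, $\til K_t:=g_L(K_t)$, $\phi_t:=g_{L_t}$, $\til g_t:=g_{\til K_t}$, and abbreviate $u_t:=e^{i\ha w(t)}$.

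\textbf{Part 1.} First I would note that since $L\subset\D$ is a $\D$-hull, the reflected set $K_t^{\doub}$ only adds to $\lin{K_t}$ points in $\{|z|>1\}$, so $K_t^{\doub}\cap\lin\D=\lin{K_t}$ remains disjoint from $\lin L$; hence $g_{K_t}$, viewed via Schwarz reflection as a conformal map on $\C\sem K_t^{\doub}$, is bianalytic on a neighbourhood of the compact set $\lin L$. Since $\lin L\cap\lin{K_t}=\emptyset$, the set $K_t\cup L$ is a $\D$-hull, so $L_t=(K_t\cup L)/K_t=g_{K_t}(L)$ is a $\D$-hull by the quotient rule recalled above (and symmetrically $\til K_t=(K_t\cup L)/L$ is a $\D$-hull). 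For the positive-distance claim I use that the radial driving point is characterized by $\{u_t\}=\bigcap_{\eps>0}\lin{g_{K_t}(K_{t+\eps}\sem K_t)}$: thus $u_t$ is a limit of images under $g_{K_t}$ of points of $K_{t+\eps_0}\sem K_t$, which all stay at distance $\ge\dist(\lin L,\lin{K_{t+\eps_0}})>0$ from $\lin L$ for a fixed small $\eps_0$ with $t+\eps_0<T$; since $g_{K_t}$ is bianalytic near $\lin L$, these images remain bounded away from $g_{K_t}(\lin L)=\lin{L_t}$, so $\dist(u_t,\lin{L_t})>0$ and $g_{g_{K_t}(L)}$ is analytic at $u_t$.

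\textbf{Part 2.} Factoring $g_{K_t\cup L}$ through $g_{K_t}$ and through $g_L$ and using $(K_t\cup L)/K_t=L_t$, $(K_t\cup L)/L=\til K_t$ yields the identity $\til g_t\circ g_L=\phi_t\circ g_{K_t}$ on $\D\sem(K_t\cup L)$. Since $g_L$ is a fixed conformal map, $(\til K_t)$ is an increasing family of $\D$-hulls, and by continuity of $t\mapsto K_t$ and of $t\mapsto g_{K_t}$ on regions away from the hulls (in particular near $\lin L$), both $t\mapsto L_t$, $t\mapsto\til K_t$ vary continuously and $t\mapsto(\phi_t,\phi_t',\phi_t(u_t))$ is continuous. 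I also record that for $t\le r<T$, since $g_{K_r}=g_{K_r/K_t}\circ g_{K_t}$ one has $L_r=g_{K_r/K_t}(L_t)$, and factoring $g_{(K_r/K_t)\cup L_t}$ in two ways (legitimate for $r$ close to $t$, as $K_r/K_t$ is then a small hull near $u_t$) gives $g_{L_r}=g_{g_{L_t}(K_r/K_t)}\circ\phi_t\circ(g_{K_r/K_t})^{-1}$. Now fix $z\in\D\sem(K_{t_0}\cup L)$ with $t_0>t$ and differentiate the identity at $z$; put $\zeta:=g_{K_t}(z)$. The left side gives $(\pa_t\til g_t)(g_L(z))$, and $\pa_t[\phi_t(g_{K_t}(z))]$ is the sum of the term with $\phi_t$ frozen and $g_{K_t}(z)$ moving, which by the radial Loewner equation for $g_{K_t}$ equals $\phi_t'(\zeta)\,\zeta\,\frac{u_t+\zeta}{u_t-\zeta}$, and the term with $\zeta$ frozen and $L_t$ moving. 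For the latter I use the factorization: as $r\downarrow t$, $K_r/K_t$ is a vanishing $\D$-hull at $u_t$ of $\D$-capacity $r-t$, hence $g_{L_t}(K_r/K_t)$ is a vanishing $\D$-hull at $\phi_t(u_t)$ of $\D$-capacity $|\phi_t'(u_t)|^2(r-t)+o(r-t)$ (the $|f'|^2$ scaling of $\D$-capacity under a map analytic at the point); differentiating the factorization at $r=t$ and using $\pa_r(g_{K_r/K_t})^{-1}(\zeta)|_{r=t}=-\zeta\frac{u_t+\zeta}{u_t-\zeta}$, the moving-$L_t$ term equals
\[ |\phi_t'(u_t)|^2\,\phi_t(\zeta)\,\frac{\phi_t(u_t)+\phi_t(\zeta)}{\phi_t(u_t)-\phi_t(\zeta)}\;-\;\phi_t'(\zeta)\,\zeta\,\frac{u_t+\zeta}{u_t-\zeta}. \]
The two copies of $\phi_t'(\zeta)\,\zeta\,\frac{u_t+\zeta}{u_t-\zeta}$ cancel, and since $\phi_t(\zeta)=\til g_t(g_L(z))$ and $w:=g_L(z)$ runs over all of $\D\sem\til K_t$, I obtain
\[ \pa_t\til g_t(w)=|\phi_t'(u_t)|^2\,\til g_t(w)\,\frac{\phi_t(u_t)+\til g_t(w)}{\phi_t(u_t)-\til g_t(w)}, \]
which is precisely the radial Loewner equation for $\til g_t=g_{\til K_t}$ with driving function $e^{i\ha w^L(t)}=\phi_t(u_t)=g_{g_{K_t}(L)}(e^{i\ha w(t)})$ and speed $|\phi_t'(u_t)|^2=|g_{g_{K_t}(L)}'(e^{i\ha w(t)})|^2$. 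Letting $w\to0$ gives $\pa_t\log\til g_t'(0)=|\phi_t'(u_t)|^2$, i.e.\ $\dcap(\til K_t)=\int_0^t|\phi_s'(u_s)|^2\,ds$, and reparametrizing by $\dcap(\til K_t)$ turns the above ODE into the standard radial Loewner equation driven by the corresponding time-change of $\ha w^L$; this is, by definition, the assertion that $\til K_t=g_L(K_t)$ are radial Loewner hulls driven by $\ha w^L$ with speed $|\phi_t'(e^{i\ha w(t)})|^2$. The speed is positive (as $\phi_t'\ne0$) and continuous, and $\ha w^L$ is continuous, by the regularity noted above.

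\textbf{Main difficulty.} The quantitative heart is the estimate that the $\D$-capacity of a $\D$-hull shrinking to a boundary point $x_0$ scales by the factor $|f'(x_0)|^2$ under a conformal map $f$ analytic at $x_0$ — this is exactly what produces the factor $|g_{g_{K_t}(L)}'(e^{i\ha w(t)})|^2$ in the speed. The remaining, more bookkeeping-heavy point is to justify that all the $t$-derivatives above exist and may be taken term by term, which rests on the joint regularity of $(t,z)\mapsto g_{K_t}(z)$ and the continuity of $t\mapsto\phi_t$ near $e^{i\ha w(t)}$ supplied by Part 1.
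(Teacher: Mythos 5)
Your proof is correct, and it rests on exactly the two facts that any proof of this lemma must use: the factorization identity $g_{g_{K_t}(L)}(K_{t+\eps}/K_t)=g_L(K_{t+\eps})/g_L(K_t)$ obtained by writing $g_{K_{t+\eps}\cup L}$ in two ways, and the $|f'(x_0)|^2$ scaling of $\dcap$ for a hull shrinking to a boundary point $x_0$ under a map analytic there (the analogue of \cite[Lemma 2.8]{LSW1}). The paper itself states the lemma as well known and suppresses the proof; the argument it has in mind runs through Pommerenke's characterization of normalized $\D$-Loewner chains: one checks that $(g_L(K_t))$ satisfies the shrinking-crosscut property, identifies the driving point from $\bigcap_{\eps>0}\lin{g_L(K_{t+\eps})/g_L(K_t)}=\{g_{g_{K_t}(L)}(e^{i\ha w(t)})\}$, and computes $\frac{d}{dt}\dcap(g_L(K_t))$ from the capacity-scaling lemma, with no need to differentiate the conjugation identity pointwise. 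You instead differentiate $g_{g_L(K_t)}\circ g_L=g_{g_{K_t}(L)}\circ g_{K_t}$ in $t$ and read off the radial Loewner ODE for $g_{g_L(K_t)}$ directly; your cancellation of the two $\phi_t'(\zeta)\,\zeta\,\frac{u_t+\zeta}{u_t-\zeta}$ terms is the right computation and the resulting vector field is exactly the radial Loewner field with speed $|\phi_t'(u_t)|^2$ and driving point $\phi_t(u_t)$. What your route buys is an explicit formula for $\pa_t g_{g_{K_t}(L)}$ as a by-product; what it costs is the extra bookkeeping you acknowledge at the end, namely justifying differentiability of $t\mapsto g_{L_t}$ and the interchange of limits, which the chain-characterization route sidesteps. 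The one input you use without proof is the capacity-scaling estimate $\dcap(A_r)=|\phi_t'(u_t)|^2(r-t)+o(r-t)$; since you flag it as the quantitative heart and it is a standard lemma, this is acceptable, but in a self-contained write-up it would need to be stated and either proved or precisely cited.
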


It will be useful to work on the covering radial Loewner equation. Let $e^i$ denote the covering map $z\mapsto e^{iz}$ from $\HH$ onto $\D\sem\{0\}$. 
The covering radial Loewner equation driven by $\ha w \in C([0,T),\R)$   is
$$ \pa_t \til g_t(z)= \cot_2(\til g_t(z)-\ha w(t)),\quad g_0(z)=z.$$ 
For each $t\in[0,T)$, let $\til K_t$ denote the set of $z\in\HH$ such that the solution $\til g_{\cdot}(z)$ blows up before or at $t$. We call $\til g_t$ and $\til K_t$, $0\le t<T$, the covering radial Loewner maps and hulls, respectively, driven by $\ha w$. It turns out that $\til K_t$ has period $2\pi$, $\til g_t$ maps $\HH\sem K_t$ conformally onto $\HH$ with $\til g_t(z+2\pi)=\til g_t(z)+2\pi$; and if $(g_t)$ and $(K_t)$ are the radial Loewner maps and hulls driven by $\ha w$, then $\til K_t=(e^i)^{-1}(K_t)$ and $e^i\circ \til g_t=g_t\circ e^i$. If $u$ is a continuous and strictly increasing function on $[0,T)$, we may similarly define covering radial Loewner maps $\til g^u_t$ and hulls $\til K^u_t$ with speed $du$ driven by $\ha w$. 

If $\ha w(t)=\sqrt\kappa B(t)$, $0\le t<\infty$, where $\kappa>0$ and $B(t)$ is a standard Brownian motion, then the radial Lowner curve $\eta$ driven by $\ha w$ is known to exist, and is called a radial SLE$_\kappa$ curve in $\D$ from $1$ to $0$.
What will be used in this paper is a generalization of radial SLE$_\kappa$: radial SLE$(\kappa;\ulin\rho)$, whose growth is affected by one or more force points lying on the boundary or the interior. For the generality needed here, we assume that all force points lie on the boundary and are distinct from the initial point of the curve. We start with the definition of radial SLE$(\kappa;\ulin\rho)$ in $\D$. Let $\rho_1,\dots,\rho_n\in\R$. Let $e^{iw},e^{iv_1},\dots,e^{iv_n}$ be distinct points on $\TT$.  Let $B(t)$ be a standard Brownian motion. Suppose that $\ha w(t)$ and $\ha v_j(t)$, $1\le j\le n$, $0\le t<T$, solve the following system of SDEs with the maximal solution interval:
$$d\ha w(t)=\sqrt\kappa dB(t)+\sum_{j=1}^n \frac{\rho_j}2 \cot_2(\ha w(t)-\ha v_j(t))dt,\quad \ha w(0)=w;$$
$$d\ha v_j(t)=\cot_2(\ha v_j(t)-\ha w(t)),\quad \ha v_j(0)=v_j,\quad 1\le j\le n.$$
Then we call the radial Loewner curve driven by $\ha w$  the SLE$(\kappa;\rho_1,\dots,\rho_n)$ curve in $\D$ started from $e^{iw}$ aimed at $0$ with force points $e^{iv_1},\dots,e^{iv_n}$.  The covering radial Loewner maps implicitly appear in the definition: if $\til g_t$ are covering radial Loewner maps, then $\ha v_j(t)=\til g_t(v_j)$.

Although we say that $\eta$ is aimed at $0$, it often happens that $\eta$ does not end at $0$.
A radial SLE$(\kappa;\ulin\rho)$ curve in a general simply connected domain $D$ started from a boundary point aimed at an interior point with force points on the boundary is defined by a conformal map from $\D$ onto $D$. 
The targeted interior point actually acts as another force point with force value $\kappa-6-\sum_{j=1}^n \rho_j$ (cf.\ \cite{SW}).

At the end of this subsection, we briefly recall chordal Loewner equation, chordal SLE$_\kappa$, and $2$-SLE$_\kappa$. Let $\ha w \in C([0,T),\R)$ for some $T\in(0,\infty]$. The chordal Loewner equation driven by $\ha w$  is
$$\pa_t g_t(z)= \frac 2{g_t(z)-\ha w(t)},\quad 0\le t<T;\quad g_0(z)=z.$$
For each $t\in[0,T)$, let $K_t$ be the set of $z\in\HH$ such that the solution $g_{\cdot}(z)$ blows up before or at $t$ (so that $g_t$ is well defined on $\HH\sem K_t$). Then we call $g_t$ and $K_t$ the chordal Loewner maps and hulls, respectively, driven by $\ha w$. It turns out that, for each $t$, $K_t$ is a bounded and relatively closed subset of $\HH$, and $g_t$ maps $\HH\sem K_t$ conformally onto $\HH$. If for every $t\in[0,T)$, $g_t^{-1}$ as a conformal map from $\HH$ onto $\HH\sem K_t$ extends continuously to $\lin\HH$, and $\eta(t):=g_t^{-1}({\ha w(t)})$, $0\le t<T$, is continuous in $t$, then we say that $\eta$ is a chordal Loewner curve driven by $\ha w$. 

If $\ha w(t)=\sqrt\kappa B(t)$, $0\le t<\infty$, where $\kappa>0$ and $B(t)$ is a standard Brownian motion, then the chordal Lowner curve $\eta$ driven by $\ha w$ is known to exist, and is called a chordal SLE$_\kappa$ curve in $\HH$ from $0$ to $\infty$. In fact, we have $\eta(0)={\ha w(0)}=0$ and $\lim_{t\to\infty} \eta(t)=\infty$ (\cite{RS}). If $D$ is a simply connected domain with two distinct marked boundary points (prime ends) $a$ and  $b$, the chordal SLE$_\kappa$ curve in $D$ from $a$ to $b$ is defined to be the conformal image of a chordal SLE$_\kappa$ curve in $\HH$ from $0$ to $\infty$ under a conformal map from $(\HH;0,\infty)$ onto $(D;a,b)$.

For any $\kappa>0$, both radial SLE$_\kappa$ and chordal SLE$_\kappa$ satisfy conformal invariance and Domain Markov Property (DMP). The DMP means that if $\eta$ is a radial (resp.\ chordal) SLE$_\kappa$ curve in $D$ from $a$ to $b$, and $T$ is a stopping time, then conditionally on the part of $\eta$ before $T$ and the event that $\eta$ does not reach $b$ at time $T$, the part of $\eta$ after $T$ is a radial (resp.\ chordal) SLE$_\kappa$ curve from $\eta(T)$ to $b$ in one connected component of $D\sem \eta([0,T])$.
If $\kappa\in(0,8)$, chordal SLE$_\kappa$ satisfies reversibility: the time-reversal of a chordal SLE$_\kappa$ curve in $D$ from $a$ to $b$ is a chordal SLE$_\kappa$ curve in $D$ from $b$ to $a$, up to a time-change (\cite{reversibility,MS3}).

Let $D$ be a simply connected domain with distinct boundary points  $a_1,b_1,a_2,b_2$ such that $a_1$ and $a_2$ together separate $b_1$ from $b_2$ on $\pa D$ (and vice versa). Let $\kappa\in(0,8)$. A $2$-SLE$_\kappa$ in $D$ with link pattern $(a_1, b_1;a_2, b_2)$ is a pair of random curves $(\eta_1,\eta_2)$ in $\lin D$ such that for $j=1,2$, $\eta_j$ connects $a_j$ with $b_j$, and conditionally on $\eta_{3-j}$, $\eta_j$ is a chordal SLE$_\kappa$ curve in the connected component of $D\sem \eta_{3-j}$ whose boundary contains $a_j$ and $b_j$. Because of reversibility, we do not need to specify the orientation of $\eta_1$ and $\eta_2$. If we want to emphasize the orientation, then we use an arrow like $a_1\to b_1$ in the link pattern.
The existence of $2$-SLE$_\kappa$ was proved in \cite{multiple} for $\kappa\in(0,4]$ using Brownian loop measure and in \cite{MS1,MS3} for $\kappa\in(4,8)$ using flow line theory. The uniqueness of $2$-SLE$_\kappa$ (for a fixed domain and link pattern) was proved in \cite{MS2} (for $\kappa\in(0,4]$) and \cite{MSW} (for $\kappa\in(4,8)$) using an ergodicity argument.

Using the DMP for chordal SLE$_\kappa$, it is easy to derive the following DMP for $2$-SLE$_\kappa$: If $(\eta_1,\eta_2)$ is a $2$-SLE$_\kappa$ in $D$ with link pattern $(a_1\to b_1;a_2,b_2)$, and if $T$ is a stopping time for $\eta_1$, then conditionally on the part of $\eta_1$ before $T$ and the event that $\eta_1$ neither reaches $b_1$ or disconnects $b_1$ from $a_2,b_2$ at time $T$, the rest part of $\eta_1$ and the complete $\eta_2$ form a $2$-SLE$_\kappa$  with link pattern $(\eta_1(T)\to b_1;a_2,b_2)$ in the connected component of $D\sem \eta_1([0,T])$ whose boundary contains $b_1,a_2,b_2$. We will have a stronger DMP later in Lemma \ref{DMP-bi-chordal}. 


\subsection{Hypergeometric SLE}\label{section-hSLE}
We now review the hypergeometric SLE defined earlier in \cite{kappa-rho} (called intermediate SLE$_\kappa(\rho)$ there) and \cite{QW}.
Let $\kappa\in(0,8)$. Let $F$ be the hypergeometric function $\,_2F_1(\frac 4\kappa,1-\frac 4\kappa;\frac 8\kappa,\cdot)$ in Theorem \ref{main-thm1}. Such $F$ is the solution of
\BGE x(1-x)F''(x)+\Big[\frac 8\kappa -2x\Big]F'(x)-\frac 4\kappa\Big(1-\frac 4\kappa\Big)F(x)=0.\label{ODE-hyper}\EDE
Since $\frac 8\kappa>(1-\frac 4\kappa)+\frac 4\kappa$, $F$ extends continuously to $1$ with
$F(1)=\frac{\Gamma(\frac 8\kappa)\Gamma(\frac 8\kappa-1)}{\Gamma(\frac 4\kappa)\Gamma(\frac {12}\kappa-1)}>0$.
Using (\ref{ODE-hyper}) one can prove that $F$ is positive on $[0,1]$. Then we let $G(x)=\kappa x\frac{F'(x)}{F(x)}$, $\til F(x)  =x^{\frac 2\kappa} F(x)$, and $\til G(x)=\kappa x\frac{\til F'(x)}{\til F(x)}=G(x)+2$.

\begin{Definition}
   Let $0,v_1,v_2\in\R$ be such that $0<v_1<v_2$ or $0>v_1>v_2$. Let $B(t)$ and $B'(t)$ be two independent standard real Brownian motion. Suppose $\ha w,\ha v_1,\ha v_2\in C([0,\infty),\R)$ satisfy the following properties. There is $T\in(0,\infty]$ such that $\ha w(t)$ and $\ha v_j(t)$, $0\le t<T$, $j=1,2$, are continuous random process such that they together solve the following SDE with the maximal solution interval and respective initial values $0$ and $v_j$, $j=1,2$:
   \begin{align*}
     d\ha w(t)=&\sqrt\kappa dB(t)+\Big(\frac{1}{\ha w(t)-\ha v_1(t)}-\frac 1{\ha w(t)-\ha v_2(t)}\Big)\til G\Big(\frac{\ha w(t)-\ha v_1(t)}{\ha w(t)-\ha v_2(t)}\Big)dt;\\
        d\ha v_j(t)=&\frac{2dt}{\ha v_j(t)-\ha w(t)}, \quad j=1,2.
   \end{align*}
Moreover, if $T<\infty$, then $\ha w(T+t)=\ha w(T)+\sqrt\kappa B'(t)$, $0\le t<\infty$. Then the chordal Loewner curve driven by $\ha w$ is called a full hSLE$_\kappa$ curve in $\HH$ from $0$ to $\infty$  with force points $v_1,v_2$.

If $f$ maps $\HH$ conformally onto a simply connected domain $D$, then the $f$-image of a full hSLE$_\kappa$ curve in $\HH$ from $0$ to $\infty$ with force points $v_1,v_2$ is called a full hSLE$_\kappa$ curve in $D$ from $f(0)$ to $f(\infty)$ with force points $f(v_1),f(v_2)$.
\end{Definition}

\begin{Remark}
In the definition of full hSLE$_\kappa$ in $\HH$, if $\kappa\in(0,4]$, then a.s.\ $T=\infty$, and we do not need the $B'$ in the definition. If $\kappa\in(4,8)$, then a.s.\ $T<\infty$; and $\eta(t)$ tends to some point on $\R$ between $\infty$ and $v_2$. The assumption that  $\ha w(T+t)=\ha w(T)+\sqrt\kappa B'(t)$, $0\le t<\infty$,  means that  given the part of $\eta$ up to $T$, the rest of  $\eta$ is a chordal SLE$_\kappa$ curve from $\eta(T)$ to $\infty$ in the remaining domain. In both cases, a full hSLE$_\kappa$ curve always ends at its target.
\end{Remark}


We now describe hSLE using radial Loewner equation.
Let   $w_0,v_1,v_2,w_\infty\in\R$ be such that   $w_0>v_1>v_2>w_\infty>w_0-2\pi$ or $w_0<v_1<v_2<w_\infty<w_0+2\pi$. Let $B(t)$ be a standard Brownian motion.  Let $\ha w_0(t)$, $\ha w_\infty(t)$, and $\ha v_j(t)$, $j=1,2$, $0\le t<T$, be the solution of the SDEs:
\begin{align*}
  d\ha w_0(t)&=\sqrt\kappa dB(t)+ \frac{\kappa-6}2 \cot_2(\ha w_0(t)-\ha w_\infty(t))dt+\\
&+\frac 12 (\cot_2(\ha w_0(t)-\ha v_1(t))-\cot_2(\ha w_0(t)-\ha v_2(t))) \til G(R(t))dt,\\
R(t)&= \frac{\sin_2(\ha w_0(t) -\ha v_1(t) )\sin_2(\ha v_2(t)-\ha w_\infty(t))}{\sin_2(\ha w_0(t)-\ha v_2(t))\sin_2(\ha v_1(t)-\ha w_\infty(t))}\\
d\ha w_\infty(t)&=\cot_2(\ha w_\infty(t)-\ha w_0(t))dt,\\
d\ha v_j(t)&=\cot_2(\ha v_j(t)-\ha w_0(t))dt,\quad j=1,2,
\end{align*}
with initial values $w_0$, $w_\infty$, and $v_j$, $j=1,2$, respectively, such that $[0,T)$ is the maximal solution interval. Then we call the radial Loewner curve driven by $\ha w_0$ a radial hSLE$_\kappa$ curve in $\D$  from $e^{iw_0}$ to $e^{iw_\infty}$ with force points $e^{iv_1},e^{iv_2}$, viewed from $0$.

\begin{Proposition}
Let $w_0,w_\infty,v_1,v_2$ be as above.
  Suppose $\eta(t)$, $0\le t<T'$, is a full hSLE$_\kappa$ curve in $\D$  from $e^{iw_0}$ to $e^{iw_\infty}$ with force points $e^{iv_1},e^{iv_2}$. Let $T$ be the first time that $\eta$ separates $0$ from any of $e^{iw_0}, e^{iv_1},e^{iv_2}$. If such time does not exist, then we set $T=T'$. Then up to a time-change, $\eta(t)$, $0\le t< T$, is a radial hSLE$_\kappa $ curve in $\D$ from $e^{iw_0}$ to $e^{iw_\infty}$ with force points $e^{iv_1},e^{iv_2}$, viewed from $0$. \label{hSLE-coordinate}
\end{Proposition}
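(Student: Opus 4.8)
The plan is to establish Proposition~\ref{hSLE-coordinate} by a change-of-coordinates computation: I will start from the chordal SDE defining the full hSLE$_\kappa$ curve, transport it into the radial picture via a conformal map that fixes the interior point $0$, and check that the resulting driving function solves exactly the radial hSLE$_\kappa$ system, up to the time-change dictated by the conformal-radius normalization. Concretely, by conformal invariance of hSLE (built into the definition) I may as well work in $\HH$: let $\psi$ be a Möbius map sending $(\HH;0,\infty)$ to $(\D;e^{iw_0},e^{iw_\infty})$ with $\psi$ not fixing $0$, transport the force points to $e^{iv_1},e^{iv_2}$, and let $\wt\eta$ be the corresponding full hSLE$_\kappa$ in $\HH$ with its chordal driving function $\ha w$ and force-point processes $\ha v_1,\ha v_2$. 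The composition $\eta=\psi(\wt\eta)$ is a radial Loewner curve once reparametrized by $\D$-capacity, and the key object to track is the image of the driving point and of the three force points under the uniformizing maps onto $\D$.

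The main technical step is the It\^o computation. I would introduce the chordal Loewner maps $g_t$ for $\wt\eta$, set $h_t = \phi_t\circ g_t\circ\psi^{-1}$ where $\phi_t$ is the normalized radial uniformizer onto $\D$ (fixing $0$, with $h_t'$ appropriately normalized at the target), and define $\ha w_0(t) = $ argument of $h_t(\eta(t))$, $\ha v_j(t) = $ argument of $h_t$ at the $j$-th force point, $\ha w_\infty(t) = $ argument of $h_t$ at the image of $\infty$. Writing $\phi_t$ as a Möbius transformation whose coefficients depend on $t$ through the hulls, one differentiates $h_t$ in $t$ using the chordal Loewner equation for $g_t$ and the fact that $\phi_t$ is determined by three normalization conditions; this is the standard ``chordal-to-radial coordinate change for SLE$(\kappa;\ulin\rho)$'' computation (as in \cite{SW}), and it produces drift terms of the form $\cot_2$ differences. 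The nontrivial bookkeeping is to verify that the hypergeometric drift coefficient $\til G$ evaluated at the chordal cross-ratio argument $\frac{\ha w - \ha v_1}{\ha w - \ha v_2}$ transforms into $\til G$ evaluated at $R(t)=\frac{\sin_2(\ha w_0-\ha v_1)\sin_2(\ha v_2 - \ha w_\infty)}{\sin_2(\ha w_0 - \ha v_2)\sin_2(\ha v_1 - \ha w_\infty)}$; this is exactly the statement that a cross-ratio of four boundary points is a conformal invariant, and under $z\mapsto e^{iz}$ a difference $a-b$ becomes $2\sin_2(a-b)$ up to a unimodular factor, so the four such factors assemble into precisely $R(t)$. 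The drift coefficient $\frac{\kappa-6}{2}\cot_2(\ha w_0 - \ha w_\infty)$ appears because in the radial coordinate the target point $e^{iw_\infty}$ acts as an additional force point with weight $\kappa-6-\rho_1-\rho_2$, and the $\til G$ contributions carry the $\rho_1,\rho_2$; after the $du$ time-change the Brownian term is restored to $\sqrt\kappa\,dB$ with the new clock. I also need to handle the stopping time: $T$ is defined as the first time $\eta$ separates $0$ from one of $e^{iw_0},e^{iv_1},e^{iv_2}$, which is precisely the time the corresponding $\D$-hull swallows that boundary point, i.e.\ the explosion time of the radial system, so the two maximal intervals match after reparametrization.

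I expect the main obstacle to be the careful algebraic verification that the transported drift is \emph{exactly} the radial hSLE drift — in particular matching the cross-ratio argument of $\til G$ and checking that no extra $\cot_2$ terms survive or fail to appear. A clean way to organize this is to first do the computation for a general radial SLE$(\kappa;\ulin\rho)$ with boundary force points (where the chordal-to-radial transformation is classical and the drift transforms term by term, each force point contributing $\frac{\rho_j}{2}\cot_2(\ha w_0 - \ha v_j)$ plus the target contributing $\frac{\kappa-6-\sum\rho_j}{2}\cot_2(\ha w_0 - \ha w_\infty)$), and then observe that the full hSLE drift is the SLE$(\kappa;\rho_1,\rho_2)$ drift with the two weights replaced by functions of the configuration: one writes $\til G(x) = \frac{2}{x-1}\cdot(\text{something}) + \cdots$ or, more transparently, notes that the hSLE chordal drift $\big(\frac{1}{\ha w-\ha v_1}-\frac{1}{\ha w-\ha v_2}\big)\til G$ is the conformal image of a radial drift of the same shape with the cross-ratio argument converted via the $\sin_2$ identities. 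One should also record the identity relating the chordal cross-ratio and the radial one at $t=0$ via the initial data $w_0>v_1>v_2>w_\infty>w_0-2\pi$, to make sure the orientations and the branch of $\arg$ are consistent throughout $[0,T)$. The time-change factor $|g'_{g_{K_t}(L)}(e^{i\ha w(t)})|^2$-type Jacobian from Lemma~\ref{Loewner-KL} is the mechanism converting chordal capacity time to $\D$-capacity time, and its smoothness guarantees the reparametrization $u$ is a valid (continuous, strictly increasing, absolutely continuous) time-change, completing the proof.
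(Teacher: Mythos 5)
Your proposal is correct and is exactly the argument the paper invokes: the paper's entire proof is the single sentence ``This follows from the standard argument as in \cite{SW}'', i.e.\ the chordal-to-radial coordinate change for SLE$(\kappa;\ulin\rho)$-type drifts, which is what you carry out in detail (transport of the driving point and force points, conformal invariance of the cross-ratio argument of $\til G$ via the $\sin_2$ identities, the target contributing the $\frac{\kappa-6}{2}\cot_2(\ha w_0-\ha w_\infty)$ term since the two $\til G$-weights cancel in sum, and the capacity time-change). No gap; you have simply written out what the citation compresses.
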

\begin{proof}
  This follows from the standard argument as in \cite{SW}.
\end{proof}

One important property of hSLE is its connection with $2$-SLE. If $(\eta_1,\eta_2)$ is a $2$-SLE$_\kappa$ in $D$ with link pattern $(a_1\to b_1;a_2\to b_2)$, then for $j=1,2$, $\eta_j$ is a full hSLE$_\kappa$ curve in $D$ from ${a_j}$ to ${b_j}$ with force points ${b_{3-j}}$ and ${a_{3-j}}$ (see e.g., \cite[Proposition 6.10]{Wu-hSLE}). The two curves $\eta_1$ and $\eta_2$  commute with each other in the following sense: if we run one curve, say $\eta_{3-j}$ up to a stopping time $T$ before reaching $b_{3-j}$ or separating $b_{3-j}$ from $b_j$ or $a_j$, and condition on this part of $\eta_{3-j}$, then the whole $\eta_j$ is a full hSLE$_\kappa$ curve from $a_j$ to $b_j$ in the remaining domain with force points $\eta_{3-j}(T)$ and $b_{3-j}$. This easily follows from the DMP of $2$-SLE. 

\subsection{Two-parameter Stochastic Processes} \label{section-two-parameter}
	We work on a measurable space $(\Omega,\F)$. Let ${\cal Q}$ denote the first quadrant $[0,\infty)^2$ with partial order $\le$ such that $\ulin t=(t_1,t_2)\le(s_1,s_2)= \ulin s$ iff $t_1\le s_1$ and $t_2\le s_2$. It has a minimal element $\ulin 0=(0,0)$. We write $\ulin t<\ulin s$ if $t_1<s_1$ and $t_2<s_2$. Moreover, we define $\ulin t\wedge \ulin s=(t_1\wedge s_1,t_2\wedge s_2)$. Given $\ulin t,\ulin s\in\cal Q$, we define $[\ulin t,\ulin s]=\{\ulin r\in{\cal Q}:\ulin t\le \ulin r\le \ulin s\}$.  For example, $[\ulin 0,\ulin t\wedge \ulin s]=[\ulin 0,\ulin t]\cap[\ulin 0,\ulin s]$. For $n\in\N$, we define $t^{\lfloor n\rfloor}=\frac{\lfloor 2^n t_1\rfloor}{2^n}$ and $t^{\lceil n\rceil}=\frac{\lceil 2^n t_1\rceil}{2^n}$ for $t\in[0,\infty)$, and $\ulin t^{\lfloor n\rfloor}=(t_1^{\lfloor n\rfloor},t_2^{\lfloor n\rfloor}) $ and $\ulin t^{\lceil n\rceil}=( t_1^{\lceil n\rceil}, t_2^{\lceil n\rceil}) $ for $\ulin t=(t_1,t_2)\in\cal Q$. Note that $\ulin t^{\lfloor n\rfloor}, \ulin t^{\lceil n\rceil}\in\cal Q$ and $\ulin t^{\lfloor n\rfloor}\le \ulin t\le \ulin t^{\lceil n\rceil}$.

	\begin{Definition}
	A family of sub-$\sigma$-fields $(\F_{\ulin t})_{\ulin t\in\cal Q}$ of $\F$ is called a ${\cal Q}$-indexed filtration if $\F_{\ulin t}\subset \F_{\ulin s}$ whenever $\ulin t\le \ulin s$. A family of random variables $(X({\ulin t}))_{\ulin t\in\cal Q}$ defined on $(\Omega,\F)$ is called an $(\F_{\ulin t})_{\ulin t\in\cal Q}$-adapted process if for any $\ulin t\in\cal Q$, $X({\ulin t})$ is $\F_{\ulin t}$-measurable. It is called continuous if   $\ulin t\mapsto X({\ulin t})$ is sample-wise continuous.
	\end{Definition}
\begin{Definition}
A random map $\ulin T:\Omega\to[0,\infty]^2$ is called an $(\F_{\ulin t})_{\ulin t\in\cal Q}$-stopping time if for any deterministic $\ulin t\in{\cal Q}$, $\{\ulin T\le \ulin t\}\in \F_{\ulin t}$.   
For such $\ulin T$, we define a new $\sigma$-field $\F_{\ulin T}$ by
$$\F_{\ulin T}=\{A\in\F:A\cap \{\ulin T\leq \ulin t\}\in \F_{\ulin t},\quad \forall \ulin t\in{\cal Q}\}.$$
The stopping time $\ulin T$ is called finite if it takes values in $\cal Q$, and is called bounded if there is a deterministic $\ulin t\in \cal Q$ such that $\ulin T\le \ulin t$.
\end{Definition}
	
	Note that any deterministic number $\ulin t\in\cal Q$ is an $(\F_{\ulin t})_{\ulin t\in\cal Q}$-stopping time, and the $\F_{\ulin t}$ defined by considering $\ulin t$ as a stopping time agrees with the $\F_{\ulin t}$ as in the filtration.
	
	\begin{Lemma}
		Let $\ulin T$ and $\ulin S$ be two $(\F_{\ulin t})_{\ulin t\in\cal Q}$-stopping times. Then (i)  $\{\ulin T\le \ulin S\} \in\F_{\ulin S}$; (ii) if $\ulin S$ is a deterministic time $\ulin s\in\cal Q$, then $\{\ulin T\le \ulin S\} \in\F_{ \ulin T}$; and (iii) if $f$ is an $\F_{ \ulin T}$-measurable function, then ${\bf 1}_{\{\ulin T\le \ulin S\}}f$ is $\F_{ \ulin S}$-measurable. In particular, if $\ulin T\le \ulin S$, then $\F_{ \ulin T}\subset \F_{ \ulin S}$. \label{T<S}
	\end{Lemma}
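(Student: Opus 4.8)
The plan is to reduce everything to the defining property of $\mathcal{Q}$-indexed stopping times, namely that $\{\ulin T\le \ulin t\}\in\F_{\ulin t}$ for every deterministic $\ulin t\in\mathcal Q$, together with the observation that $\mathcal Q$ has a countable cofinal subset (the dyadic points), which is what replaces the usual one-dimensional trick of ``approximating from above by countably many rationals''. The mild subtlety compared to the classical one-parameter case is that the partial order $\le$ on $\mathcal Q$ is not total, so I cannot write $\{\ulin T\le\ulin S\}=\bigcup_{q}\{\ulin T\le q<\ulin S\}$ naively; instead I will intersect with the grid events $\{\ulin S\le\ulin t\}$ and compare $\ulin T$ against the dyadic approximation $\ulin t^{\lfloor n\rfloor}$, using $\ulin t^{\lfloor n\rfloor}\le\ulin t$ and the monotonicity $\F_{\ulin t^{\lfloor n\rfloor}}\subset\F_{\ulin t}$.

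For part (i): fix a deterministic $\ulin t\in\mathcal Q$; I must show $\{\ulin T\le\ulin S\}\cap\{\ulin S\le\ulin t\}\in\F_{\ulin t}$. On the event $\{\ulin S\le\ulin t\}$ one has $\{\ulin T\le\ulin S\}\subseteq\{\ulin T\le\ulin t\}$, but the converse fails, so I refine: I claim
\[
\{\ulin T\le\ulin S\}\cap\{\ulin S\le\ulin t\}=\bigcap_{n\in\N}\ \bigcup_{\ulin q\in D_n,\ \ulin q\le\ulin t}\big(\{\ulin T\le\ulin q\}\cap\{\ulin S\le\ulin t\}\cap\{\ulin q\le\ulin S\}\big),
\]
where $D_n=(2^{-n}\Z_{\ge0})^2$ is the $n$-th dyadic grid; the point is that $\ulin T\le\ulin S$ iff for every $n$ the dyadic floor $\ulin T^{\lceil n\rceil}$ lies $\le$ some grid point $\ulin q$ with $\ulin q\le\ulin S$ — more carefully one uses that $\ulin T\le\ulin S$ with $\ulin S\le\ulin t$ forces $\ulin S^{\lfloor n\rfloor}$ to be a grid point $\le\ulin t$ that dominates $\ulin T$ in the limit. (I will state this set identity cleanly in coordinates.) Each building block is in $\F_{\ulin t}$: $\{\ulin T\le\ulin q\}\in\F_{\ulin q}\subset\F_{\ulin t}$ since $\ulin q\le\ulin t$; $\{\ulin S\le\ulin t\}\in\F_{\ulin t}$; and $\{\ulin q\le\ulin S\}=\Omega\setminus\{\ulin S\le\ulin q'\ \text{for some}\ q'\not\ge q\}$ — actually $\{\ulin q\le\ulin S\}$ need not be $\F_{\ulin t}$-measurable in general, so the correct move is to write $\{\ulin q\le\ulin S\}\cap\{\ulin S\le\ulin t\}$ and observe its complement within $\{\ulin S\le\ulin t\}$ is $\{\ulin S\le\ulin t\}\setminus\{\ulin q\le\ulin S\}$, which is a countable union over grid points $\ulin r$ with $\ulin r\le\ulin t$, $\ulin r\not\ge\ulin q$ of $\{\ulin S\le\ulin r\}$, hence in $\F_{\ulin t}$. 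Assembling, the right-hand side is a countable combination of $\F_{\ulin t}$-sets, so $\{\ulin T\le\ulin S\}\cap\{\ulin S\le\ulin t\}\in\F_{\ulin t}$, i.e.\ $\{\ulin T\le\ulin S\}\in\F_{\ulin S}$.

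Part (ii) is the special case $\ulin S=\ulin s$ deterministic: then $\{\ulin T\le\ulin S\}=\{\ulin T\le\ulin s\}$, and for any $\ulin t\in\mathcal Q$, $\{\ulin T\le\ulin s\}\cap\{\ulin T\le\ulin t\}=\{\ulin T\le\ulin s\wedge\ulin t\}\in\F_{\ulin s\wedge\ulin t}\subseteq\F_{\ulin t}$, so $\{\ulin T\le\ulin s\}\in\F_{\ulin T}$. For part (iii): let $f$ be $\F_{\ulin T}$-measurable; it suffices to treat $f={\bf 1}_A$ with $A\in\F_{\ulin T}$, and then ${\bf 1}_{\{\ulin T\le\ulin S\}}f={\bf 1}_{A\cap\{\ulin T\le\ulin S\}}$, so I must check $A\cap\{\ulin T\le\ulin S\}\in\F_{\ulin S}$. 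Fix deterministic $\ulin t$; then $A\cap\{\ulin T\le\ulin S\}\cap\{\ulin S\le\ulin t\}$ equals $\big(A\cap\{\ulin T\le\ulin t\}\big)\cap\{\ulin T\le\ulin S\}\cap\{\ulin S\le\ulin t\}$ — using $\{\ulin T\le\ulin S\}\cap\{\ulin S\le\ulin t\}\subseteq\{\ulin T\le\ulin t\}$ — where $A\cap\{\ulin T\le\ulin t\}\in\F_{\ulin t}$ by definition of $\F_{\ulin T}$, and $\{\ulin T\le\ulin S\}\cap\{\ulin S\le\ulin t\}\in\F_{\ulin t}$ by part (i); hence the intersection is in $\F_{\ulin t}$, giving $A\cap\{\ulin T\le\ulin S\}\in\F_{\ulin S}$. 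Finally, if $\ulin T\le\ulin S$ identically, then for $A\in\F_{\ulin T}$ we get $A={\bf 1}_{\{\ulin T\le\ulin S\}}$-trivially $A\cap\{\ulin T\le\ulin S\}=A\in\F_{\ulin S}$, so $\F_{\ulin T}\subseteq\F_{\ulin S}$.

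The main obstacle is purely the failure of totality of $\le$ on $\mathcal Q$: I must be careful that sets like $\{\ulin q\le\ulin S\}$ are only guaranteed measurable \emph{after} intersecting with a grid event $\{\ulin S\le\ulin t\}$, and that the dyadic-approximation identity for $\{\ulin T\le\ulin S\}$ is stated in a way that genuinely captures the coordinatewise order rather than some weaker ``comparable'' relation. Once the set-theoretic identity in (i) is written out in coordinates and checked, parts (ii) and (iii) and the final inclusion are formal consequences.
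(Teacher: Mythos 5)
Parts (ii) and (iii) of your argument, and the final inclusion $\F_{\ulin T}\subset\F_{\ulin S}$, are correct and coincide with the paper's proof. The problem is in part (i), on which (iii) depends: the set identity you propose,
$$\{\ulin T\le\ulin S\}\cap\{\ulin S\le\ulin t\}=\bigcap_{n\in\N}\ \bigcup_{\ulin q\in D_n,\ \ulin q\le\ulin t}\big(\{\ulin T\le\ulin q\}\cap\{\ulin S\le\ulin t\}\cap\{\ulin q\le\ulin S\}\big),$$
is false. The right-hand side requires, for every $n$, a level-$n$ dyadic point $\ulin q$ with $\ulin T\le\ulin q\le\ulin S$, and no such point exists when some coordinate of $\ulin T$ equals the corresponding coordinate of $\ulin S$ at a non-dyadic value: take $\ulin T=\ulin S=(1/3,1/3)$ and $\ulin t=(1,1)$, so the left side is all of $\Omega$ while the right side is empty. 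This is not a notational issue you can defer to "stating the identity cleanly in coordinates" — the inequality $\ulin T\le\ulin S$ is not strict, so no countable grid can be interpolated between $\ulin T$ and $\ulin S$, and neither of your candidate choices $\ulin q=\ulin S^{\lfloor n\rfloor}$ (may fail $\ulin T\le\ulin q$) nor $\ulin q=\ulin T^{\lceil n\rceil}$ (may fail $\ulin q\le\ulin S$) works. Your side remark about measurability of $\{\ulin q\le\ulin S\}\cap\{\ulin S\le\ulin t\}$ is fine, but it does not rescue the main identity.

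The repair is to pass to the complement, which is what the paper does: on $\{\ulin S\le\ulin t\}\cap\{\ulin T\le\ulin t\}$ the event $\{\ulin T\not\le\ulin S\}$ is the union of $\{S_1<T_1\le t_1\}$ and $\{S_2<T_2\le t_2\}$ (intersected with the appropriate bounds on the other coordinate), and a \emph{strict} inequality does admit rational interpolation:
$$\{S_1<T_1\le t_1\}\cap\{T_2\vee S_2\le t_2\}=\bigcup_{t_1'\in\Q\cap(0,t_1)}\{\ulin T\le\ulin t\}\cap\{\ulin S\le(t_1',t_2)\}\cap\{\ulin T\le(t_1',t_2)\}^c,$$
where every event on the right is of the form $\{\ulin T\le\cdot\}$ or $\{\ulin S\le\cdot\}$ at a deterministic time $\le\ulin t$, hence lies in $\F_{\ulin t}$. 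With part (i) established this way, your parts (ii) and (iii) go through unchanged.
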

	\begin{proof}
	(i)	Let $\ulin t=(t_1,t_2)\in\cal Q$. We have
		$$\{\ulin T\le \ulin S\}\cap \{\ulin S\le \ulin t\}=\{\ulin T\le \ulin t\}\cap \{\ulin S\le \ulin t\}\cap \{\ulin T\not\le \ulin S\}^c$$
		$$=\{\ulin S\le \ulin t\}\cap \{\ulin T\le \ulin t\}\cap [(\{S_1<T_1\le t_1\}\cap \{T_2\vee S_2\le t_2\})\cup (\{T_1\vee S_1\le t_1\}\cap \{S_2<T_2\le t_2\})].$$
		Since $\ulin T$ and $\ulin S$ are stopping times, $\{\ulin S\le \ulin t\}\cap \{\ulin T\le \ulin t\}\in\F_{\ulin t}$. Since we may write
		$$\{S_1<T_1\le t_1\}\cap \{T_2\vee S_2\le t_2\}=\bigcup_{t_1'\in\Q\cap (0,t_1)}\{S_1\le t_1'<T_1\le t_1\}\cap \{T_2\vee S_2\le t_2\}$$
		$$=\bigcup_{t_1'\in\Q\cap (0,t_1)} \{\ulin T\le \ulin t\}\cap \{\ulin S\le (t_1',t_2)\}\cap \{\ulin T\le (t_1',t_2)\}^c,$$
		we get $\{S_1<T_1\le t_1\}\cap \{T_2\vee S_2\le t_2\}\in \F_{\ulin t}$. Similarly, $\{T_1\vee S_1\le t_1\}\cap \{S_2<T_2\le t_2\}\in\F_{\ulin t}$. Combining, we get $\{\ulin T\le \ulin S\}\cap \{\ulin S\le \ulin t\}\in\F_{\ulin t}$. Thus,  $\{\ulin T\le \ulin S\}\in\F_{\ulin S}$.

(ii) If $\ulin S=\ulin s$ for some $\ulin s\in\cal Q$, then $\{\ulin T\le \ulin S\} \in\F_{\ulin T}$ because for any $\ulin t\in\cal Q$,
$$\{\ulin T\le \ulin S\}\cap \{\ulin T\le \ulin t\}=\{\ulin T\le \ulin s\wedge \ulin t\}\in\F_{\ulin s\wedge \ulin t}\subset \F_{\ulin t}.$$
		
	(iii) By monotone convergence, it suffices to consider the case that $f={\bf 1}_A$, where $A\in \F_{\ulin T}$. Then for any $\ulin t \in\cal Q$,
		$$A\cap \{\ulin T\le \ulin S\}\cap \{\ulin S\le \ulin t\}=(A\cap \{\ulin T\le \ulin t\})\cap (\{\ulin T\le \ulin S\}\cap \{\ulin S\le \ulin t\})\in \F_{ \ulin t}.$$
		So $A\cap \{\ulin T\le \ulin S\}\in \F_{\ulin S}$, which implies that ${\bf 1}_{\{\ulin T\le \ulin S\}}f={\bf 1}_{A\cap \{\ulin T\le \ulin S\}}$ is $\F_{ \ulin S}$-measurable.
	\end{proof}
	
	\begin{Remark}
		In general, we do not have $\{\ulin T\le \ulin S\}\in\F_{\ulin T}$ unless $\ulin S$ is deterministic or separable (see Definition \ref{separable} and Lemma \ref{separable-lemma}). %
	\end{Remark}

	\begin{Lemma}
		Let $(X_{\ulin t})_{\ulin t\in\cal Q}$ be a continuous $(\F_{\ulin t})_{\ulin t\in\cal Q}$-adapted process, and let $\ulin T$ be an $(\F_{\ulin t})_{\ulin t\in\cal Q}$-stopping time. Then $X_{\ulin T}$ defined on $\{\ulin T\in\cal Q\}$ is $\F_{\ulin T}$-measurable. \label{measurable}
	\end{Lemma}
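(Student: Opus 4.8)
The plan is to reduce the measurability of $X_{\ulin T}$ to the measurability of the "frozen" variables obtained by evaluating $X$ along the dyadic over-approximations $\ulin T^{\lceil n\rceil}$ of $\ulin T$, and then to pass to the limit using continuity of sample paths. First I would observe that for each fixed $n$, the random time $\ulin T^{\lceil n\rceil}$ takes values in the countable set ${\cal D}_n := \{\ulin t\in{\cal Q}: 2^n t_1,2^n t_2\in\Z_{\ge 0}\}$, and that $\{\ulin T^{\lceil n\rceil}=\ulin d\}=\{\ulin d-2^{-n}(1,1)\prec \ulin T \le \ulin d\}$ up to the usual boundary adjustments, which lies in $\F_{\ulin d}$ since $\ulin T$ is a stopping time. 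Hence on the event $\{\ulin T\in{\cal Q}\}$ we may write $X_{\ulin T^{\lceil n\rceil}} = \sum_{\ulin d\in{\cal D}_n} {\bf 1}_{\{\ulin T^{\lceil n\rceil}=\ulin d\}} X_{\ulin d}$, a countable sum; I would check that each summand is $\F_{\ulin T}$-measurable, using that $X_{\ulin d}$ is $\F_{\ulin d}$-measurable together with Lemma \ref{T<S}(iii) applied with $\ulin S = \ulin d$ (so that ${\bf 1}_{\{\ulin T\le \ulin d\}} X_{\ulin d}$ is $\F_{\ulin d}$-measurable, and intersecting with the complementary event $\{\ulin T\not\le \ulin d-2^{-n}(1,1)\}$ keeps us inside $\F_{\ulin T}$ via part (ii)). A cleaner route is simply: ${\bf 1}_{\{\ulin T^{\lceil n\rceil}=\ulin d\}}X_{\ulin d}$ is $\F_{\ulin d}$-measurable and supported on $\{\ulin T\le\ulin d\}$, so by Lemma \ref{T<S}(iii) it is $\F_{\ulin T}$-measurable after multiplication by ${\bf 1}_{\{\ulin T\le\ulin d\}}$, which is redundant here; summing over the countable index set ${\cal D}_n$ preserves $\F_{\ulin T}$-measurability.

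Next I would note that $\ulin T^{\lceil n\rceil}\to\ulin T$ as $n\to\infty$, monotonically decreasing coordinatewise, so on the event $\{\ulin T\in{\cal Q}\}$ sample-path continuity of $(X_{\ulin t})$ gives $X_{\ulin T^{\lceil n\rceil}}\to X_{\ulin T}$ pointwise. Since a pointwise limit of $\F_{\ulin T}$-measurable functions is $\F_{\ulin T}$-measurable (on the set where the limit exists, which here is all of $\{\ulin T\in{\cal Q}\}$), we conclude that $X_{\ulin T}$ restricted to $\{\ulin T\in{\cal Q}\}$ is $\F_{\ulin T}$-measurable. One should also record that $\{\ulin T\in{\cal Q}\}=\bigcap_{\ulin t\in{\cal Q}\cap\Q^2}\big(\{\ulin T\le\ulin t\}\cup\{\ulin T\le\ulin t\}^c\big)$ — more usefully, $\{\ulin T\in{\cal Q}\}=\bigcup_{\ulin t}\{\ulin T\le\ulin t\}$ over a countable cofinal family of deterministic $\ulin t$, hence belongs to $\F$ and in fact each piece $\{\ulin T\le\ulin t\}$ lies in $\F_{\ulin t}$, so the domain of definition is itself measurable and the statement is unambiguous.

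The main obstacle I anticipate is purely bookkeeping at the boundary of the dyadic cells: one must be careful that $\{\ulin T^{\lceil n\rceil}=\ulin d\}$ is genuinely expressible through events of the form $\{\ulin T\le\cdot\}$ so that it lands in the correct $\sigma$-field, because $\ulin T^{\lceil n\rceil}=\ulin d$ forces $\ulin T\le\ulin d$ but only $\ulin T\not\le\ulin d'$ for $\ulin d'$ differing from $\ulin d$ by $2^{-n}$ in \emph{each} coordinate is too weak — one needs the refined description $\{T_1\in(d_1-2^{-n},d_1]\}\cap\{T_2\in(d_2-2^{-n},d_2]\}$, and the half-open intervals must be written as countable unions/intersections of $\{\ulin T\le\cdot\}$-type events exactly as in the proof of Lemma \ref{T<S}(i). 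Once that reduction is in place, the rest is the standard "discretize the stopping time from above, then use right-continuity (here, sample continuity)" argument, and no further subtlety arises. Note we use the over-approximation $\ulin T^{\lceil n\rceil}$ rather than $\ulin T^{\lfloor n\rfloor}$ precisely so that $\{\ulin T^{\lceil n\rceil}=\ulin d\}\subset\{\ulin T\le\ulin d\}$, which is what makes Lemma \ref{T<S}(iii) applicable; the under-approximation would not have this property.
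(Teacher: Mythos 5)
Your overall strategy (dyadically discretize $\ulin T$, handle each dyadic cell via Lemma \ref{T<S}, pass to the limit by sample continuity) is the right one, but you have chosen the wrong discretization, and the measurability step at the core of your argument fails. The problem is the claim that ${\bf 1}_{\{\ulin T^{\lceil n\rceil}=\ulin d\}}X_{\ulin d}$ is $\F_{\ulin T}$-measurable. On the event $\{\ulin T^{\lceil n\rceil}=\ulin d\}$ one has $\ulin T\le\ulin d$, so $X_{\ulin d}$ is the process evaluated at a time \emph{after} $\ulin T$; in general this uses information not contained in $\F_{\ulin T}$ (think of $X$ as a ${\cal Q}$-indexed analogue of Brownian motion: on that event $X_{\ulin d}-X_{\ulin T}$ involves increments beyond $\ulin T$). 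Lemma \ref{T<S}(iii) cannot rescue this. In that lemma the conclusion is $\F_{\ulin S}$-measurability, where $\ulin S$ is the stopping time appearing on the \emph{larger} side of the inequality in the indicator; so to land in $\F_{\ulin T}$ you must multiply an $\F_{\ulin t}$-measurable function by ${\bf 1}_{\{\ulin t\le\ulin T\}}$, with the deterministic time \emph{before} the stopping time. Your ``cleaner route,'' multiplying by ${\bf 1}_{\{\ulin T\le\ulin d\}}$, has the inequality reversed and only yields $\F_{\ulin d}$-measurability, which on that event is the larger $\sigma$-field, not the smaller one you need. Consequently $X_{\ulin T^{\lceil n\rceil}}$ is only $\F_{\ulin T^{\lceil n\rceil}}$-measurable, and its limit is measurable with respect to $\bigcap_n\F_{\ulin T^{\lceil n\rceil}}$, a right-continuous enlargement of $\F_{\ulin T}$, rather than $\F_{\ulin T}$ itself.

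The fix is exactly the opposite of your closing remark: use the under-approximation $\ulin T^{\lfloor n\rfloor}$, as the paper does. One has $\{\ulin T^{\lfloor n\rfloor}=\ulin t\}=\{\ulin t\le\ulin T\}\cap\{\ulin T<\ulin t+(2^{-n},2^{-n})\}\in\F_{\ulin T}$ by Lemma \ref{T<S}(i,ii), and on this event
$${\bf 1}_{\{\ulin T^{\lfloor n\rfloor}=\ulin t\}}X_{\ulin T^{\lfloor n\rfloor}}={\bf 1}_{\{\ulin T^{\lfloor n\rfloor}=\ulin t\}}\,{\bf 1}_{\{\ulin t\le\ulin T\}}X_{\ulin t},$$
where ${\bf 1}_{\{\ulin t\le\ulin T\}}X_{\ulin t}$ \emph{is} $\F_{\ulin T}$-measurable by Lemma \ref{T<S}(iii), applied with the deterministic time $\ulin t$ in the role of the first stopping time and $\ulin T$ in the role of $\ulin S$. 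Summing over $\ulin t\in(\frac{\Z}{2^n})^2$ and letting $n\to\infty$, continuity gives $X_{\ulin T^{\lfloor n\rfloor}}\to X_{\ulin T}$ on $\{\ulin T\in{\cal Q}\}$, and the limit is $\F_{\ulin T}$-measurable. The containment that Lemma \ref{T<S}(iii) actually requires is $\{\ulin T^{\lfloor n\rfloor}=\ulin t\}\subset\{\ulin t\le\ulin T\}$, which the under-approximation has and the over-approximation lacks.
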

	\begin{proof}
		Since $\ulin T^{\lfloor n\rfloor}\uparrow \ulin T$, as $n\to\infty$, by the continuity of $X$, it suffices to show that for every $n\in\N$, $X_{\ulin T^{\lfloor n\rfloor}}$  is $\F_{\ulin T}$-measurable.
		For a fixed $n\in\N$, since  $\ulin T^{\lfloor n\rfloor}|_{\{\ulin T\in\cal Q\}}$ takes values in $(\frac{\Z}{2^n})^2$; and for every $\ulin t\in (\frac{\Z}{2^n})^2$, by Lemma \ref{T<S} (i,ii),
		$\{\ulin T^{\lfloor n\rfloor}=\ulin t\}=\{\ulin t\le  \ulin T\}\cap \{\ulin T< \ulin t+(\frac{1}{2^{n}},\frac{1}{2^{n}})\}\in \F_{\ulin T}$,
		it suffices to show that $X_{\ulin T^{\lfloor n\rfloor}}$ restricted to $\{\ulin T^{\lfloor n\rfloor}=\ulin t\}$ is $\F_{\ulin T}$-measurable.
To see this, we may write
		${\bf 1}_{\{\ulin T^{\lfloor n\rfloor}=\ulin t\}}X_{\ulin T^{\lfloor n\rfloor}}={\bf 1}_{\{\ulin T^{\lfloor n\rfloor}=\ulin t\}}{\bf 1}_{\{\ulin t\le \ulin T\}} X_{\ulin t}$. Since $X_{\ulin t}$ is $\F_{\ulin t}$-measurable, by Lemma \ref{T<S}, ${\bf 1}_{\{\ulin t\le \ulin T\}} X_{\ulin t}$ is $\F_{\ulin T}$-measurable. So ${\bf 1}_{\{\ulin T^{\lfloor n\rfloor}=\ulin t\}}X_{\ulin T^{\lfloor n\rfloor}}$ is $\F_{\ulin T}$-measurable, as desired.
	\end{proof}
	
	From now on, we fix a  a probability measure $\PP$ on $(\Omega,\F)$, and let $\EE$ denote the corresponding expectation.
	
	\begin{Definition}
	An  $(\F_{\ulin t})_{\ulin t\in\cal Q}$-adapted process $(X_{\ulin t})_{\ulin t\in\cal Q}$ is called an  $(\F_{\ulin t})_{\ulin t\in\cal Q}$-martingale (w.r.t.\ $ \PP$) if every $X_{\ulin t}$ is integrable, and for any $\ulin s\le \ulin t\in\cal Q$, $\EE[X_{\ulin t}|\F_{\ulin s}]=X_{\ulin s}$. If there is $X\in L^1(\Omega,\F,\PP)$ such that $X_{\ulin t}=\EE[X|\F_{ \ulin t}]$ for every $\ulin t\in\cal Q$, then it is clear that  $(X_{\ulin t})$ is an $(\F_{\ulin t})$-martingale. We call such $(X_{\ulin t})$  an $X$-Doob martingale or simply a Doob martingale.
	\end{Definition}
		
	\begin{Lemma} [Optional Stopping Theorem]
		Let $(X_{\ulin t})_{\ulin t\in\cal Q}$ be a continuous $(\F_{\ulin t})_{\ulin t\in\cal Q}$-martingale.
The following are true. (i) If $(X_{\ulin t})$ is an $X$-Doob martingale for some $X\in L^1$, then for any finite $(\F_{\ulin t})_{\ulin t\in\cal Q}$-stopping time $\ulin T$, $X_T=\EE[X|\F_{ \ulin T}]$. (ii) If $\ulin T\le \ulin S$ are two bounded $(\F_{\ulin t})_{\ulin t\in\cal Q}$-stopping times, then $\EE[X_{\ulin S}|\F_{\ulin T}]=X_{\ulin T}$.\label{OST}
	\end{Lemma}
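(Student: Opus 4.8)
The plan is to reduce the two-parameter statement to the classical one-parameter optional stopping theorem by approximating stopping times from above along the dyadic grid, exactly the standard device for discrete-time optional stopping. First I would prove part (i). Given a finite stopping time $\ulin T$, set $\ulin T^{\lceil n\rceil}=(T_1^{\lceil n\rceil},T_2^{\lceil n\rceil})$, which takes values in the countable grid $(\frac{\Z_{\ge 0}}{2^n})^2$ and satisfies $\ulin T\le \ulin T^{\lceil n\rceil}\downarrow \ulin T$ as $n\to\infty$. On the event $\{\ulin T^{\lceil n\rceil}=\ulin t\}$ (which lies in $\F_{\ulin t}$ since it equals $\{\ulin T< \ulin t\}\cap\{\ulin t-(2^{-n},2^{-n})<\ulin T\}\cap\{\ulin T\le\ulin t\}$-type combinations expressible via the stopping-time property and Lemma \ref{T<S}), we have $X_{\ulin T^{\lceil n\rceil}}=X_{\ulin t}=\EE[X|\F_{\ulin t}]$. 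Summing over the grid, $X_{\ulin T^{\lceil n\rceil}}=\sum_{\ulin t}{\bf 1}_{\{\ulin T^{\lceil n\rceil}=\ulin t\}}\EE[X|\F_{\ulin t}]=\EE[X|\F_{\ulin T^{\lceil n\rceil}}]$, where the last equality uses that $\{\ulin T^{\lceil n\rceil}=\ulin t\}\in\F_{\ulin t}\subset\F_{\ulin T^{\lceil n\rceil}}$ together with Lemma \ref{T<S}(iii) to move the indicator inside the conditional expectation. Since $\F_{\ulin T}\subset\F_{\ulin T^{\lceil n\rceil}}$ (Lemma \ref{T<S}, as $\ulin T\le\ulin T^{\lceil n\rceil}$) and $\bigcap_n \F_{\ulin T^{\lceil n\rceil}}\supset\F_{\ulin T}$, the family $(\EE[X|\F_{\ulin T^{\lceil n\rceil}}])_n$ is a reverse martingale in $n$; by the reverse martingale convergence theorem it converges a.s.\ and in $L^1$ to $\EE[X|\bigcap_n\F_{\ulin T^{\lceil n\rceil}}]$, and one checks $\bigcap_n\F_{\ulin T^{\lceil n\rceil}}=\F_{\ulin T}$ using continuity of the filtration along dyadics and Lemma \ref{measurable}. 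On the other hand, by continuity of the sample paths of $X$, $X_{\ulin T^{\lceil n\rceil}}\to X_{\ulin T}$ a.s. Matching the two limits gives $X_{\ulin T}=\EE[X|\F_{\ulin T}]$, which is $\F_{\ulin T}$-measurable by Lemma \ref{measurable}.

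For part (ii), assume $\ulin T\le\ulin S$ are bounded, say $\ulin S\le\ulin s\in\cal Q$. Because the martingale is only assumed to be defined on $\cal Q$ (not necessarily Doob), I would first reduce to the Doob case by restricting to the box $[\ulin 0,\ulin s]$: set $X:=X_{\ulin s}$ and observe that $(X_{\ulin t})_{\ulin t\le\ulin s}$ is an $X$-Doob martingale on $[\ulin 0,\ulin s]$ since $\EE[X_{\ulin s}|\F_{\ulin t}]=X_{\ulin t}$ for $\ulin t\le\ulin s$ by the martingale property. Then part (i), applied to the finite stopping times $\ulin T$ and $\ulin S$ (which are $\le\ulin s$), gives $X_{\ulin T}=\EE[X_{\ulin s}|\F_{\ulin T}]$ and $X_{\ulin S}=\EE[X_{\ulin s}|\F_{\ulin S}]$. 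Since $\ulin T\le\ulin S$ implies $\F_{\ulin T}\subset\F_{\ulin S}$ (Lemma \ref{T<S}), the tower property yields $\EE[X_{\ulin S}|\F_{\ulin T}]=\EE[\EE[X_{\ulin s}|\F_{\ulin S}]|\F_{\ulin T}]=\EE[X_{\ulin s}|\F_{\ulin T}]=X_{\ulin T}$, as desired.

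The main obstacle is part (i): making rigorous that the reverse-martingale limit of $\EE[X|\F_{\ulin T^{\lceil n\rceil}}]$ is exactly $\EE[X|\F_{\ulin T}]$, i.e., that $\bigcap_n\F_{\ulin T^{\lceil n\rceil}}=\F_{\ulin T}$. The inclusion $\supset$ is immediate from monotonicity of $\F_{(\cdot)}$ along stopping times (Lemma \ref{T<S}). For $\subset$, one must show that an event $A$ lying in every $\F_{\ulin T^{\lceil n\rceil}}$ satisfies $A\cap\{\ulin T\le\ulin t\}\in\F_{\ulin t}$ for all deterministic $\ulin t$; the subtlety is that $A\cap\{\ulin T^{\lceil n\rceil}\le\ulin u\}\in\F_{\ulin u}$ for grid points $\ulin u$, and one wants to pass to the limit and land in $\F_{\ulin t}$. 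This requires care because the filtration is not assumed right-continuous; the way around it is to note $\{\ulin T\le\ulin t\}=\bigcap_n\{\ulin T^{\lceil n\rceil}\le\ulin t^{\lceil n\rceil}\}$ (monotone decreasing) and use that $\F_{\ulin t^{\lceil n\rceil}}$ stabilizes at $\F_{\ulin t}$ when $\ulin t$ is dyadic, handling general $\ulin t$ by a further approximation together with the continuity of $X$ used through Lemma \ref{measurable}. The bounded case (ii) is then purely formal, as sketched above. One should also record that all the $X_{\ulin T}, X_{\ulin S}$ appearing are integrable — this follows from $L^1$-convergence in the reverse martingale argument of part (i), or directly since they are conditional expectations of $L^1$ variables.
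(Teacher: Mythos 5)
Your overall strategy — approximate $\ulin T$ from above by the dyadic-grid stopping times $\ulin T^{\lceil n\rceil}$, prove the identity on the grid by summing over $\{\ulin T^{\lceil n\rceil}=\ulin t\}$, and pass to the limit using path continuity — is exactly the paper's proof, and part (ii) is handled the same way (reduce to the Doob martingale $X=X_{\ulin s}$ on a box, with the minor caveat that you should take $\ulin s\in\N^2$ so that $\ulin T^{\lceil n\rceil}\le\ulin s$). The one point to correct is the step you single out as the main obstacle: the identity $\bigcap_n\F_{\ulin T^{\lceil n\rceil}}=\F_{\ulin T}$ is generally \emph{false} without right-continuity of the filtration (the intersection is a version of $\F_{\ulin T+}$), and your sketch for proving it would not go through. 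Fortunately it is also unnecessary. You already know $X_{\ulin T^{\lceil n\rceil}}=\EE[X|\F_{\ulin T^{\lceil n\rceil}}]\to X_{\ulin T}$ a.s.\ by continuity; since these are conditional expectations of a fixed $X\in L^1$ they are uniformly integrable, so the convergence also holds in $L^1$. Then for any $A\in\F_{\ulin T}$ one has $A\in\F_{\ulin T^{\lceil n\rceil}}$ by Lemma \ref{T<S}, hence $\EE[{\bf 1}_AX]=\EE[{\bf 1}_AX_{\ulin T^{\lceil n\rceil}}]\to\EE[{\bf 1}_AX_{\ulin T}]$; combined with the $\F_{\ulin T}$-measurability of $X_{\ulin T}$ from Lemma \ref{measurable}, this gives $X_{\ulin T}=\EE[X|\F_{\ulin T}]$ directly, with only the easy inclusion $\F_{\ulin T}\subset\F_{\ulin T^{\lceil n\rceil}}$ ever used.
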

	\begin{proof}
(i) Assume that $(X_{\ulin t})$ is an $X$-Doob martingale.
	First, we assume that $\ulin T$ takes values in $(\frac{\Z}{2^n})^2$ for some $n\in\N$. 		Since $X_{\ulin T}$ is $\F_{ \ulin T}$-measurable by Lemma \ref{measurable}, it suffices to show that, for any $A\in\F_{\ulin T}$, $\EE[{\bf 1}_A X_{\ulin T}]=\EE[{\bf 1}_A X]$. We now fix $A\in\F_{\ulin T}$. For any $\ulin t\in {\cal Q}\cap (\frac{\Z}{2^n})^2$, since
	 $A\cap \{\ulin T=\ulin t\}\in\F_{\ulin t}$,		using $\EE[X|\F_{\ulin t}]=X_{\ulin t}$, we get $\EE[{\bf 1}_{A\cap \{\ulin T=\ulin t\}} X_{\ulin t}]=\EE[{\bf 1}_{A\cap \{\ulin T=\ulin t\}} X ]$.		Summing up over  $\ulin t\in {\cal Q}\cap (\frac{\Z}{2^n})^2$, we get $\EE[{\bf 1}_A X_{\ulin T}]=\EE[{\bf 1}_A X ]$ in this special case.
		
		Now we consider the general case. Note that for every $n\in\N$, $\ulin T^{\lceil n\rceil}$ takes values in $(\frac{\Z}{2^n})^2$, and is a stopping time because for any $\ulin t\in \cal Q$, $\{\ulin T^{\lceil n\rceil}\le \ulin t\}=\{T\le \ulin t^{\lfloor n\rfloor}\}\in\F_{\ulin t^{\lfloor n\rfloor}}		\subset \F_{\ulin t}$. Applying the special case to $\ulin T^{\lceil n\rceil}$, we get $\EE[X |\F_{\ulin T^{\lceil n\rceil}}]=X_{\ulin T^{\lceil n\rceil}}$.  Since $ \ulin T^{\lceil n\rceil}\downarrow T$ as $n\to\infty$. By the continuity of $X$, we have $X_{\ulin T^{\lceil n\rceil}}\to X_{\ulin T}$. Since $\F_{\ulin T }\subset \F_{\ulin T^{\lceil n\rceil}}$ by Lemma \ref{T<S}, a standard argument involving uniform integrability shows that $\EE[X |\F_{\ulin T }]=X_{\ulin T}$.
		
		(ii) First assume that $\ulin S$ is a constant $\ulin s\in\N^2$. Then $\ulin S\ge \ulin T^{\lceil n\rceil}$ for all $n\in\N$. Using the same argument as in (i) with $X_{\ulin S}$ in place of $X$, we get $\EE[X_{\ulin S}|\F_{ \ulin T}]=X_{\ulin T}$.
		
		Finally, we consider the general case. Since  $\ulin S$ is bounded, there is $\ulin r\in {\cal Q}\cap \N^2$ such that $\ulin T\le \ulin S\le \ulin r$. Let   $A\in\F_{ \ulin T}\subset\F_{ \ulin S}$. From the special case of (ii), we get $\EE [{\bf 1}_A X_{\ulin S}]=\EE [{\bf 1}_A X_{\ulin r}]=\EE [{\bf 1}_A X_{\ulin T}]$, which implies that $\EE[X_{\ulin S}|\F_{\ulin T}]=X_{\ulin T}$.
	\end{proof}

	\begin{Definition}
Suppose that there are two filtrations  $(\F^1_{t_1})$ and $(\F^2_{t_2})$ such that $\F_{(t_1,t_2)}=\F^1_{t_1}\vee \F^2_{t_2}$, $(t_1,t_2)\in\cal Q$. Then we say that $(\F_{ \ulin t})_{\ulin t\in\cal Q}$ is a separable filtration generated by $(\F^1_{t_1})$ and $(\F^2_{t_2})$. For such a separable filtration, if $T_j$ is an $(\F^j_t)$-stopping time, $j=1,2$, then $(T_1,T_2)$ is clearly an $(\F_{ \ulin t})_{\ulin t\in\cal Q}$-stopping time, and is called separable  (w.r.t.\ $(\F^1_{t_1})$ and $(\F^2_{t_2})$).  \label{separable}
\end{Definition}

\begin{Lemma}
Let $\ulin T$ and $\ulin S$ be two stopping times w.r.t.\ a separable filtration $(\F_{\ulin t})_{\ulin t\in\cal Q}$. If $\ulin S$ is separable, then $\{\ulin T\le \ulin S\}\in\F_{ \ulin T}$. \label{separable-lemma}
\end{Lemma}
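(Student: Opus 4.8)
The plan is to verify directly the defining property of $\F_{\ulin T}$-measurability: for every deterministic $\ulin t\in{\cal Q}$ one must check that $\{\ulin T\le\ulin S\}\cap\{\ulin T\le\ulin t\}\in\F_{\ulin t}$. The starting point is the elementary set identity
$$\{\ulin T\le\ulin S\}\cap\{\ulin T\le\ulin t\}=\{\ulin T\le\ulin S\wedge\ulin t\},$$
which holds because, componentwise, the conjunction $T_j\le S_j$ and $T_j\le t_j$ is equivalent to $T_j\le S_j\wedge t_j$ for $j=1,2$. So it suffices to prove $\{\ulin T\le\ulin S\wedge\ulin t\}\in\F_{\ulin t}$, and the natural tool is Lemma \ref{T<S}, applied to the pair of stopping times $\ulin T$ and $\ulin S\wedge\ulin t$.

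The only place the separability hypothesis on $\ulin S$ enters is in checking that $\ulin S\wedge\ulin t$ is a stopping time at all. Writing $\F_{(s_1,s_2)}=\F^1_{s_1}\vee\F^2_{s_2}$ and $\ulin S=(S_1,S_2)$ with $S_j$ an $(\F^j_s)$-stopping time, the truncated variable $S_j\wedge t_j$ is again an $(\F^j_s)$-stopping time, since $\{S_j\wedge t_j\le r\}$ equals $\Omega$ when $t_j\le r$ and equals $\{S_j\le r\}\in\F^j_r$ otherwise. Hence $\ulin S\wedge\ulin t=(S_1\wedge t_1,S_2\wedge t_2)$ is a separable, and in particular an $(\F_{\ulin r})_{\ulin r\in{\cal Q}}$-, stopping time, and it is bounded by the deterministic time $\ulin t$.

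With this in hand, Lemma \ref{T<S}(i) gives $\{\ulin T\le\ulin S\wedge\ulin t\}\in\F_{\ulin S\wedge\ulin t}$, and since $\ulin S\wedge\ulin t\le\ulin t$ pointwise, the last assertion of Lemma \ref{T<S}(iii) yields $\F_{\ulin S\wedge\ulin t}\subset\F_{\ulin t}$. Combining the two inclusions with the identity above shows $\{\ulin T\le\ulin S\}\cap\{\ulin T\le\ulin t\}\in\F_{\ulin t}$, and as $\ulin t\in{\cal Q}$ was arbitrary this is precisely $\{\ulin T\le\ulin S\}\in\F_{\ulin T}$. I do not expect a genuine obstacle here: the argument is a short reduction to Lemma \ref{T<S}, and the only subtle point is the bookkeeping of the previous paragraph. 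It is exactly this point that accounts for the warning in the preceding Remark, since for a non-separable $\ulin S$ the individual events $\{S_1\le r\}$, $\{S_2\le r\}$ need not be measurable, so $\ulin S\wedge\ulin t$ need not be a stopping time and the reduction collapses.
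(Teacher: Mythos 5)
Your argument is correct and is essentially the paper's own proof: the same identity $\{\ulin T\le\ulin S\}\cap\{\ulin T\le\ulin t\}=\{\ulin T\le\ulin S\wedge\ulin t\}$, the same observation that separability of $\ulin S$ is exactly what makes $\ulin S\wedge\ulin t$ a stopping time, and the same appeal to Lemma \ref{T<S} to land in $\F_{\ulin S\wedge\ulin t}\subset\F_{\ulin t}$. You have merely spelled out the componentwise check that $S_j\wedge t_j$ is an $(\F^j_s)$-stopping time, which the paper leaves implicit.
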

\begin{proof}
	For any $\ulin t\in\cal Q$, $\{\ulin T\le \ulin S\}\cap\{\ulin T\le \ulin t\}=\{\ulin T\le \ulin S\wedge\ulin t\}\in \F_{\ulin S\wedge \ulin t}\subset \F_{ \ulin t}$. So $\{\ulin T\le \ulin S\}\in\F_{ \ulin T}$.
	Here we use  Lemma \ref{T<S} (i,ii) and the fact that $\ulin S\wedge \ulin t$ is an  $(\F_{\ulin t})_{\ulin t\in\cal Q}$-stopping time (and so $\F_{ \ulin S\wedge \ulin t}$ is well defined), which follows from the assumption that $\ulin S$ is separable.
\end{proof}

	\begin{Definition}
		A relatively open subset $\cal R$ of $\cal Q$ is called a history complete region, or simply an HC region, if for any $\ulin t\in \cal R$, we have $[\ulin 0, \ulin t]\subset\cal R$. Given an HC region $\cal R$, we may define two functions $T^{\cal R}_1,T^{\cal R}_2:[0,\infty)\to[0,\infty]$ such that
		$$[0,T^{\cal R}_1(t_2))=\{s_1\ge 0: (s_1,t_2)\in{\cal R}\},\quad [0,T^{\cal R}_2(t_1))=\{s_2\ge 0:(t_1,s_2)\in{\cal R}\},\quad t_1,t_2\ge 0.$$
A map $\cal D$ from $\Omega$ into the space of HC regions is called an $(\F_{ \ulin t})_{\ulin t\in\cal Q}$-stopping region if for any $\ulin t\in\cal Q$, $\{\omega\in\Omega:\ulin t\in {\cal D}(\omega)\}\in\F_{ \ulin t}$. A random function $X({\ulin t})$ with a random domain $\cal D$ is called an $(\F_{ \ulin t})_{\ulin t\in\cal Q}$-adapted HC process if $\cal D$ is an $(\F_{ \ulin t})_{\ulin t\in\cal Q}$-stopping region, and for every  $\ulin t\in\cal Q$, $X_{\ulin t}$ restricted to $\{\ulin t\in\cal D\}$ is $\F_{ \ulin t}$-measurable. \label{Def-HC}
	\end{Definition}

\section{Ensemble of Two Radial Loewner Chains}\label{section-Ensemble}
In this section, we consider two (deterministic or stochastic) radial Loewner curves growing in the same domain $\D$, and study how they interact with each other. The calculation used in Sections \ref{deterministic} and \ref{section-Probability-measures} is standard, which is analog of the one in \cite[Section 5]{LSW-8/3}. We fix distinct points $e^{iw_1},e^{iv_1},e^{iw_2},e^{iv_2}$ ordered clockwise on $\TT$, and consider several probability measures $\PP_{iB},\PP_{ih},\PP_{i4},\PP_{ch},\PP_{c4}$ on the space of pairs of driving functions $(\ha w_1,\ha w_2)$, which generate radial Loewner curves $(\eta_1,\eta_2)$. The letters $i$ and $c$ respectively  stand for independence and communication. The letter $B$ stands for Brownian motion. Under $\PP_{iB}$, $\ha w_j=w_1+\sqrt\kappa B_j$, $j=1,2$, where $B_1$ and $B_2$ are independent standard Brownian motions. The letter $h$ stands for hSLE. Under $\PP_{ih}$ or $\PP_{ch}$, each of $\eta_1$ and $\eta_2$ is an hSLE$_\kappa$. They are independent under  $\PP_{ih}$, and commute with each other under  $\PP_{ch}$. The number $4$ stands for $4$-SLE$_\kappa$. Under $\PP_{i4}$ or $\PP_{c4}$, for $j=1,2$, $\eta_j$ is a radial SLE$_\kappa(2,2,2)$ curve from $e^{iw_j}$ to $0$ with force points $e^{iv_1},e^{iv_2},e^{iw_{3-j}}$. They are independent under  $\PP_{i4}$, and commute with each other under $\PP_{c4}$. In the latter case the two curves intersect only at $0$, and if we further grow two independent chordal SLE$_\kappa$ curves in the two complement domains from $e^{iv_1}$ and $e^{iv_2}$ to $0$, we then get a $4$-SLE$_\kappa$ that connect $0$ to $e^{iw_1},e^{iw_2},e^{iv_1},e^{iv_2}$. We are not going to prove this fact since it is not used in the proof of our main theorems.

The  $\PP_{iB},\PP_{ih},\PP_{i4}$ are product measures, and are easy to construct. The measure $\PP_{c4}$ and $\PP_{ch}$ are constructed using the stochastic coupling technique in \cite{reversibility}. We define two-time-parameter processes $M_{iB\to c4}$ and $M_{iB\to ch}$ in (\ref{M-i-4'},\ref{M-i-2}), which respectively act as the Radon-Nikodym derivative processes of $\PP_{c4}$ and $\PP_{ch}$ against $\PP_{iB}$. We show that, under $\PP_{iB}$, they are two-variable local martingales: when one variable is fixed, they are local martingales in the other variable with SDEs (\ref{pajM4},\ref{pajM2}). We then use these local martingales to construct $\PP_{c4}$ and $\PP_{ch}$ in Section \ref{localization}  following the approach in \cite{reversibility}. We also define $\PP_2$ as the joint law of the two radial Loewner driving functions for the two curves in a $2$-SLE$_\kappa$ with link pattern $(e^{iw_1},e^{iv_1};e^{iw_2},e^{iv_2})$. Then $\PP_2$ agrees with $\PP_{ch}$ at any stopping time $\ulin T=(T_1,T_2)$ such that $\eta_1[0,T_1]\cap \eta_2[0,T_2]=\emptyset$. Combining this fact with the construction of $\PP_{c4}$ and $\PP_{ch}$, we obtain the main result (\ref{RN-4to2}) of this section: the Radon-Nikodym derivative   of $\PP_{2}$ against $\PP_{c4}$ at the time $\ulin T$ is $M_{c4\to ch}(\ulin T)/M_{c4\to ch}(\ulin 0)$, where $M_{c4\to ch}=M_{iB\to ch}/M_{iB\to c4}$.

\subsection{Deterministic ensemble}\label{deterministic}
Let $w_1,w_2,v_1,v_2\in\R$ be such that $w_1>v_1>w_2>v_2>w_2-2\pi$. For $j=1,2$, let $\ha w_j\in C([0,S_j),\R)$ be a radial Loewner driving function with $\ha w_j{(0)}=w_j $.  
Suppose $\ha w_j$ generates radial Loewner hulls $K_j(t)$, 
 radial Loewner maps $g_j(t,\cdot)$, covering Loewner hulls $\til K_j(t)$ and covering radial Loewner maps $\til g_j(t,\cdot)$, $0\le t<S_j$.  
 Let $\cal D$ denote the set of $(t_1,t_2)\in[0,S_1)\times [0,S_2)$ such that $\lin{K_1(t_1)}\cap \lin{K_2(t_2)}=\emptyset$ and $e^{iv_1 },e^{iv_2 }\not\in \lin{K_1(t_1)}\cup \lin{K_2(t_2)}$. Then $\cal D$ is an HC region as in Definition \ref{Def-HC}, and we may define functions $T^{\cal D}_1$ and $T^{\cal D}_2$.
For $(t_1,t_2)\in\cal D$, let $K(t_1,t_2)=K_1(t_1)\cup K_2(t_2)$. Then $K(t_1,t_2)$ is also an $\D$-hull. Let
 $g((t_1,t_2),\cdot)=g_{K(t_1,t_2)}$, and $\mA(t_1,t_2)=\dcap(K(t_1,t_2))$. For $(t_1,t_2)\in\cal D$ and $j\ne k\in\{1,2\}$, let 
$K_{j,t_k}(t_j)=g_k(t_k,K_j(t_j))$, and $g_{j,t_k}(t_j,\cdot)=g_{K_{j,t_k}(t_j)}$. Then we have
\BGE g_{1,t_2}(t_1,\cdot)\circ g_2(t_2,\cdot)=g((t_1,t_2),\cdot)=g_{2,t_1}(t_2,\cdot)\circ g_1(t_1,\cdot).\label{circ'}\EDE
Let $\til K(t_1,t_2),\til K_{j,t_k}(t_j)\subset\HH$ be the pre-images of $K(t_1,t_2),K_{j,t_k}(t_j)$, respectively, under the map $e^i$.
Let $\til g((t_1,t_2),\cdot)$, $(t_1,t_2)\in\cal D$, be the unique family of maps, such that $\til g((t_1,t_2),z)$ is joint continuous in $t_1,t_2,z$; $\til g((0,0),\cdot)=\id$; and for each $(t_1,t_2)\in\cal D$, $\til g((t_1,t_2),\cdot):\HH\sem  \til K(t_1,t_2)\conf\HH$, and $e^i\circ \til g((t_1,t_2),\cdot)=  g((t_1,t_2),\cdot)\circ e^i$.  Define $\til g_{1,t_2}(t_1,\cdot)$ and $\til g_{2,t_1}(t_2,\cdot)$, $(t_1,t_2)\in\cal D$, similarly. Using (\ref{circ'}) we get
\BGE \til g_{1,t_2}(t_1,\cdot)\circ \til g_2(t_2,\cdot)=\til g((t_1,t_2),\cdot)=\til g_{2,t_1}(t_2,\cdot)\circ \til g_1(t_1,\cdot).\label{circ}\EDE
Note also that $\til g((t_1,0),\cdot)=\til g_1(t_1,\cdot)$ and $\til g((0,t_2),\cdot)=\til g_2(t_2,\cdot)$. So $\til g_{1,t_2}(0,\cdot)$ (resp.\ $\til g_{2,t_1}(0,\cdot)$) is an identity if $(0,t_2)\in\cal D$ (resp.\ $(t_1,0)\in\cal D$).
Let $(t_1,t_2)\in{\cal D}$. From the assumption on $e^{iv_1},e^{iv_2}$, $g((t_1,t_2),\cdot)$ extends conformally to neighborhoods of $e^{iv_1}$ and $e^{iv_2}$. Thus,   $\til g((t_1,t_2),\cdot)$ extends conformally to neighborhoods of $v_1 $ and $v_2 $. Then we define real valued functions
\BGE V_j(t_1,t_2)=\til g((t_1,t_2),v_j ),\quad V_{j,1}(t_1,t_2)=\til g'((t_1,t_2),v_j ),\quad (t_1,t_2)\in\cal D.\label{Vj}\EDE
Here and below the prime means the partial derivative w.r.t.\ the last variable. Fix $j\ne k\in\{1,2\}$. From Lemma \ref{Loewner-KL} we know that $g_{k,t_j}(t_k,\cdot)$ extends conformally to a neighborhood of $e^{i\ha w_j(t_j)}$. Thus,  $\til g_{k,t_j}(t_k,\cdot)$ extends conformally to a neighborhood of ${\ha w_j(t_j)}$. Now we define
\BGE W_j(t_1,t_2)=\til g_{k,t_j}(t_{k},\ha w_j(t_j)),\quad W_{j,h}(t_1,t_2)=\til g_{k,t_j}^{(h)}(t_{k},\ha w_j(t_j)),\quad (t_1,t_2)\in\cal D.\label{Wj}\EDE
Here and below the superscript $(h)$ means the $h$-th partial derivative w.r.t.\ the last variable.
We then have $W_1>V_1>W_2>V_2>W_1-2\pi$.
For a function $X$ defined on $\cal D$, $k\in\{1,2\}$, and $t_k\ge 0$, we let $X^{(k,t_k)}$ be the function defined on $[0,T^{\cal D}_j(t_k))$ obtained from $X$ by fixing the $k$-th variable to be $t_k$. Since $\til g_{k,t_j}(0,\cdot)$ are identity maps, we get 
\BGE  W_{j,1}^{(k,0)}\equiv 1,\quad W_{j,2}^{(k,0)}=W_{j,3}^{(k,0)}\equiv 0,\quad j\ne k\in\{1,2\}.\label{Wat0}\EDE

Using Lemma \ref{Loewner-KL} we know that, for any $t_k\ge 0$, $K_{j,t_k}(t_j)$ and $\til g_{j,t_k}(t_j,\cdot)$, $0\le t_j<T^{\cal D}_j(t_k)$, are radial Loewner hulls and covering radial Loewner maps, respectively, driven by $W_j^{(k,t_k)}$ with speed $|W_{j,1}^{(k,t_k)}|^2$. This means that
\BGE \pa_j \mA=\pa_j(\dcap(K_k(t_k))+\dcap(K_{j,t_k}(t_j)))=W_{j,1} ^2\pa t_j;\label{pamA}\EDE
\BGE \pa_j\til g_{j,t_k}(t_j,z)=W_{j,1}(t_1,t_2)^2\cot_2(\til g_{j,t_k}(t_j,z)-W_j(t_1,t_2))\pa t_j.\label{eqn1}\EDE
Plugging $z=\ha w_k(t_k)$ and $z=\til g_k(t_k,v_s )$, respectively, into (\ref{eqn1}), we get
\BGE\pa_j W_k=W_{j,1}^2 \cot_2(W_k-W_j)\pa t_j,\quad \pa_j V_s=W_{j,1}^2\cot_2(V_s-W_j)\pa t_j,\quad s=1,2.\label{dWkVs}\EDE
Differentiating (\ref{eqn1}) w.r.t.\ $z$, we get
\BGE \frac{\pa_j\til g_{j,t_k}'(t_j,z)}{\til g_{j,t_k}'(t_j,z)}=W_{j,1}(t_1,t_2)^2\cot_2'(\til g_{j,t_k}(t_j,z)-W_j(t_1,t_2))\pa t_j.\label{eqn2}\EDE
 Plugging $z=\ha w_k(t_k)$ and $z=\til g_k(t_k,v_s )$, respectively, into (\ref{eqn2}), we get
\BGE \frac{\pa_j W_{k,1}}{W_{k,1}}=W_{j,1}^2\cot_2'(W_k-W_j)\pa t_j,\quad \frac{\pa_j V_{s,1}}{V_{s,1}}=W_{j,1}^2\cot_2'(V_s-W_j)\pa t_j,\quad s=1,2.\label{dWj1Vs1}\EDE
Since $\cot_2'(W_k-W_j)<0$, we see that $W_{k,1}$ is decreasing in $t_j$ and stays positive. From (\ref{Wat0}) we see that $W_{k,1}\in(0,1]$. Since $\mA(t_1,0)=t_1$ and $\mA(0,t_2)=t_2$, from (\ref{pamA}), we get
\BGE t_1\vee t_2\le  \mA(t_1,t_2)\le t_1+t_2,\quad (t_1,t_2)\in\cal D.\label{mA-est}\EDE
Let $Sg:=(\frac{g''}{g'})'-\frac 12(\frac{g''}{g'})^2$ denote the Schwarzian derivative of $g$, and let
\BGE W_{k,S}=S\til g_{j,t_k}(t_j,\ha w_k(t_k))=\frac{W_{k,3}}{W_{k,1}}-\frac 32 \Big(\frac{W_{k,2}}{W_{k,1}}\Big)^2.\label{WkS}\EDE
Differentiating (\ref{eqn2}) w.r.t.\ $z$, we get
$$ \pa_j \Big (\frac{\til g_{j,t_k}''(t_j,z)}{\til g_{j,t_k}'(t_j,z)}\Big )=W_{j,1}(t_1,t_2)^2\cot_2''(\til g_{j,t_k}(t_j,z)-W_j(t_1,t_2))\til g_{j,t_k}'(t_j,z)\pa t_j.$$
Further differentiating this equation w.r.t.\ $z$  and plugging $z=\ha w_k(t_k)$, we get
\BGE \pa_j W_{K,S}=W_{j,1}^2W_{k,1}^2 \cot_2'''(W_k-W_j)\pa t_j.\label{pajWKS}\EDE

Differentiating (\ref{circ}) w.r.t.\ $t_j$ and using (\ref{eqn1}), we get
$$\pa_{t_j} \til g_{k,t_j}(t_k,\til g_j(t_j,z))+ \til g_{k,t_j}'(t_k,\til g_j(t_j,z))  \cot_2(\til g_j(t_j,z)-\ha w_j(t_j ))$$ $$= W_{j,1}(t_1,t_2)^2\cot_2(\til g_{j,t_k}(t_j,\til g_k(t_k,z))-W_j(t_1,t_2)).$$
Let $\ha z=\til g_j(t_j ,z)$. Using (\ref{circ}) we get
\BGE \pa_{t_j} \til g_{k,t_j}(t_k,\ha z)=\til g_{k,t_j}'(t_k, \ha w_j(t_j))^2 \cot_2 (\til g_{k,t_j}(t_k,\ha z)-\til g_{k,t_j}(t_k, \ha w_j(t_j)))- \til g_{k,t_j}'(t_k,\ha z) \cot_2(\ha z-\ha w_j(t_j)).\label{haz}\EDE
Sending $\ha z\to \ha w_j(t_j)$, we get
\BGE \pa_{t_j}\til g_{k,t_j}(t_k,\ha z)|_{\ha z=\ha w_j(t_j)}=-3\til g_{k,t_j}''(t_k,\ha w_j(t_j))=-3W_{j,2};\label{-3}\EDE
Differentiating (\ref{haz}) w.r.t.\ $\ha z$ and then sending $\ha z\to \ha w_j(t_j)$, we get
\BGE \frac{\pa_{t_j} \til g_{k,t_j}'(t_k,\ha z)|_{\ha z=\ha w_j(t_j)}}{\til g_{k,t_j}'(t_k,\ha z)|_{\ha z=\ha w_j(t_j)}}=\frac 12 \Big(\frac{W_{j,2}}{W_{j,1}}\Big)^2-\frac 43 \frac{W_{j,3}}{W_{j,1}}-\frac 16 (W_{j,1}^2-1). \label{1/2-4/3}\EDE

Finally, suppose that $\ha w_1$ and $\ha w_2$ generate radial Loewner curves $\eta_1$ and $\eta_2$, respectively, and for any $j\ne k\in(1,2)$, and any $t_k\in[0,S_k)$, the radial Loewner process driven by $W_j^{(k,t_k)}$ with speed $|W_{j,1}^{(k,t_k)}|^2$ generates a radial Loewner curve $\eta_{j,t_k}$. Then we have $\eta_j(t_j)=g_k(t_k,\cdot)^{-1}(\eta_{j,t_k}(t_j))$, $0\le t_j<T^{\cal D}_j(t_k)$, where $g_k(t_k,\cdot)^{-1}$ is understood as the continuous extension of the original $g_k(t_k,\cdot)^{-1}$ from $\D$ to $\lin\D$.

\subsection{Two-variable local martingales}\label{section-Probability-measures}
We use the setup in the previous subsection. We view $(\ha w_1(t))_{0\le t<S_1}$ and $(\ha w_2(t))_{0\le t<S_2}$ as elements in $\Sigma:=\bigcup_{0<T\le\infty} C([0,T),\R)$. The space $\Sigma$ and a filtration $(\F_t)_{t\ge 0}$ were defined in \cite[Section 2]{decomposition}. Here is a brief review. For $f\in\Sigma$, let $T_f$ be such that $[0,T_f)$ is the domain of $f$.
For $0\le t<\infty$, the $\F_t$ is the $\sigma$-algebra on $\Sigma$ generated by the values of the function at the times before $t$. More precisely, $\F_t$ is the $\sigma$-algebra generated by $$\{f\in\Sigma: s<T_f, f(s)\in U\},\quad 0\le s\le t, U\in\cal{B}(\R).$$

Now we introduce randomness. Fix $\kappa\in(0,8)$ throughout. The boundary scaling exponent $\bb$ and central charge $\cc$ are defined by 
\BGE \bb=\frac{6-\kappa}{2\kappa},\quad \cc=\frac{(3\kappa-8)(6-\kappa)}{2\kappa}.\label{bc}\EDE
For $j=1,2$, we let $\PP_B^j$ denote the law of $w_j+\sqrt\kappa B(t)$, $0\le t<\infty$, where $B(t)$ is a standard Brownian motion, which is a probability measure on $(\Sigma,\F_{ \infty})$.  For $j=1,2$, let $\PP^j_4$ denote the law of the radial Loewner driving function with initial value $w_j$ for the radial SLE$_\kappa(2,2,2)$ curve in $\D$ started from $e^{iw_j}$ aimed at $0$ with force points $e^{iv_1},e^{iv_2},e^{iw_{3-j}}$. For $j=1,2$, let $\PP^j_h$ denote the law of the radial Loewner driving function with initial value $w_j$ for the radial hSLE$_\kappa$ curve in $\D$ from $e^{iw_j}$ to $e^{iv_j}$ with force points $e^{iv_{3-j}},e^{iw_{3-j}}$, viewed from $0$. 

From now on, when there is no ambiguity, we will not try to distinguish the law of a driving function and the law of the radial Loewner curve that it generates.
We will mainly work on the product measurable space, and use the notation in Section \ref{section-two-parameter}. We naturally have the following product measures: $\PP_{iB}:=\PP_B^1\times\PP_B^2$, $\PP_{i4}:=\PP_4^1\times \PP_4^2$, and $\PP_{ih}:=\PP_h^1\times\PP_h^2$. We use $\EE_{iB},\EE_{i4},\EE_{ih}$ to denote the corresponding expectations, respectively.

Now suppose $(\ha \eta_1,\ha \eta_2)$ is a $2$-SLE$_\kappa$ in $\D$ with link pattern $(e^{iw_1}\to e^{iv_1};e^{iw_2}\to e^{iv_2})$. Then $\ha \eta_j$ is a full hSLE$_\kappa$ curve in $\D$ from $e^{iw_j}$ to $e^{iv_j}$ with force points $e^{iv_{3-j}}$ and $e^{iw_{3-j}}$. For $j=1,2$, let $\eta_j$ be the part of $\ha \eta_j$ from $w_j$ up to its lifetime or the time that it separates $0$ from any of $e^{iv_j},e^{iw_{3-j}},e^{iv_{3-j}}$, if the later time exists. Then we may parametrize $\eta_j$ using radial capacity, and get a radial Loewner curve. By Proposition \ref{hSLE-coordinate},  $\eta_j$ is a radial hSLE$_\kappa$ curve in $\D$ from  $e^{iw_j}$ to $e^{iv_j}$ with force points $e^{iv_{3-j}},e^{iw_{3-j}}$, viewed from $0$. We use $\PP_{2}$ to denote the joint law of the radial driving functions for $\eta_1$ and $\eta_2$. Such measure $\PP_{2}$ is a coupling of $\PP_h^1$ and $\PP_h^2$, but is different from the product measure $\PP_{ih}$. Instead, $\eta_1$ and $\eta_2$ that jointly follow the law $\PP_{2}$ commute with each other in the following sense:  for any $j\in\{1,2\}$, conditionally on the part of $\eta_{3-j}$ up to a stopping time $\tau$ before its lifetime, if $g$ maps the remaining domain conformally onto $\D$ with $g(0)=0$ and $g'(0)>0$, then the $g$-image of $\eta_j$ up to the time that $\eta_j$ hits $\eta_{3-j}[0,\tau]$ is a radial hSLE$_\kappa$ curve in $\D$ from $g(e^{iw_j})$ to $g(e^{iv_j})$ with force points $g(e^{iv_{3-j}})$ and $g(\eta_{3-j}(\tau))$, viewed from $0$. The measure $\PP_{2}$ depends on the points $w_1,v_1,w_2,v_2$. When we want to emphasize the dependence, we use the symbol $\PP_{2}^{w_1,v_1;w_2,v_2}$.

For $j=1,2$, let $(\F^j_t)$ be the 
filtration generated by the $j$-th function as described at the beginning of this subsection. Let $(\F_{\ulin t})$ be the separable ${\cal Q}$-indexed filtration generated by $(\F^1_t)$ and $(\F^2_t)$.   Then $\cal D$ is an $(\F_{\ulin t})_{\ulin t\in\cal Q}$-stopping region, and
 for $j\ne k\in\{1,2\}$ and $h=1,2,3$, $\til g_j(t_j,\cdot)$, $w_j(t_j)$, $\til g((t_1,t_2),\cdot)$, $\til g_{j,t_k}(t_j,\cdot)$, $W_j(t_1,t_2)$, $W_{j,h}(t_1,t_2)$, $W_{j,S}(t_1,t_2)$, $V_j(t_1,t_2)$,  $V_{j,1}(t_1,t_2)$,   defined for $(t_1,t_2)\in\cal D$, are all continuous $(\F_{\ulin t})_{\ulin t\in\cal Q}$-adapted HC processes.

Let $B_1(t)$ and $B_2(t)$ be two independent standard Brownian motions. Suppose $\ha w_j(t)=w_j+\sqrt\kappa B_j(t)$, $0\le t<\infty$. 
Fix $j\ne k\in\{1,2\}$. Let $\F^{(k,\infty)}_{t_j}$ denote the $\sigma$-algebra $\F^j_{t_j}\vee \F^k_{\infty}$.  Then we get a filtration $(\F^{(k,\infty)}_{t_j})_{t_j\ge 0}$. Since $(\ha w_j)$ is independent of $(\F^k_{\infty})$, it is a rescaled $(\F^{(k, \infty)}_{t_j})_{t_j\ge 0}$-Brownian motion started from $w_j$.
Fix an $(\F^k_{\infty})$-measurable finite time $\tau_k$. From now on, we will repeatedly use It\^o's formula, where the variable $t_k$ is fixed to be $\tau_k$, the variable $t_j$  ranges in $[0,T^{\cal D}_j( \tau_k))$, and all SDE are $(\F^{(k,\infty)}_{t_j})_{t_j\ge 0}$-adapted. Recall that $X^{(k,\tau_k)}$ is the function obtained from a two-variable function $X$ by fixing the $k$-th variable to be $\tau_k$.  Using (\ref{Wj},\ref{-3}), we get
$$d W^{(k,\tau_k)}_j(t_j)=W^{(k,\tau_k)}_{j,1}(t_j)d \ha w_j(t_j)+\Big(\frac{\kappa 	 }{2}-3\Big)W_{j,2}^{(k,\tau_k)} d t_j.$$

To make the symbols less heavy, we will omit the superscripts $(k,\tau_k)$ and the variables $(t_j)$, and use the symbols $\pa_j$, $\pa\ha w_j$ and $\pa t_j$ to emphasize the role of $t_j$. The above SDE then becomes
$$\pa_j W_j=W_{j,1} \pa\ha w_j+\Big(\frac{\kappa 	}{2}-3\Big)W_{j,2}\pa t_j.$$
Combining this with (\ref{dWkVs}), we get (for $s=1,2$)
\begin{align*}
\pa_j (W_j-W_k)&=W_{j,1}\pa\ha w_j+\Big(\frac{\kappa 	}{2}-3\Big)W_{j,2}\pa t_j
+W_{j,1}^2\cot_2(W_j-W_k)\pa t_j;\\
\pa_j(W_j-V_s)&=W_{j,1}\pa\ha  w_j
+\Big(\frac{\kappa 	}{2}-3\Big)W_{j,2}\pa t_j
+W_{j,1}^2\cot_2(W_j-V_s)\pa t_j ;\\
\pa_j(W_k-V_s)&=
-W_{j,1}^2\cot_2(W_j-W_k)\pa t_j
+W _{j,1} ^2\cot_2(W _j -V _s )\pa t_j ;\\
\pa_j (V _j -V _k )&=
-W _{j,1} ^2\cot_2(W _j-V _j )\pa t_j
+W _{j,1}^ 2\cot_2(W _j-V_k)\pa t_j.
\end{align*}
Then we have
\begin{align}
  \frac{\pa_j\sin_2(W _j-W _k)}{\sin_2(W _j-W_k)}=&\frac 12\cot_2(W _j-W _k)W_{j,1}\pa \ha w_j+\frac 12 W _{j,1}^2\cot_2^2(W _j-W _k)\pa t_j \nonumber\\
 & +\frac 12\cot_2(W _j-W _k)\Big(\frac{\kappa 	}{2}-3\Big)W_{j,2}\pa t_j -\frac \kappa 8W _{j,1}^2\pa t_j; \label{dsin(Wj-Wk)}\\
  \frac{\pa_j\sin_2(W _j-V _s)}{\sin_2(W _j-V _s)}=&\frac 12\cot_2(W _j-V _s)W _{j,1}\pa \ha w_j+\frac 12 W _{j,1}^2\cot_2^2(W _j-V _s)\pa t_j\nonumber\\
 & +\frac 12\cot_2(W _j-V _s)\Big(\frac{\kappa 	}{2}-3\Big)W _{j,2}\pa t_j-\frac \kappa 8W _{j,1}^2\pa t_j;\label{dsin(Wj-Vs)}\\
   \frac{\pa_j \sin_2(W_k-V_s)}{\sin_2(W_k-V_s)}=&-\frac 12W_{j,1}^2[1+\cot_2(W_j-W_k)\cot_2(W_j-V_s)]\pa t_j;\label{dsin(Wk-Vs)}\\
    \frac{\pa_j \sin_2(V_j-V_k)}{\sin_2(V_j-V_k)}=&-\frac 12W_{j,1}^2[1+\cot_2(W_j-V_j)\cot_2(W_j-V_k)]\pa t_j.\label{dsin(V1-V2)}
\end{align}
Using (\ref{1/2-4/3}), we get
$$\frac{\pa_j W_{j,1}}{W_{j,1}}=\frac {W_{j,2}}{W_{j,1}}\pa \ha w_j+\frac 12\Big(\frac{W_{j,2}}{W_{j,1}}\Big)^2\pa t_j+\Big(\frac \kappa 2-\frac 43\Big)\frac{W_{j,3}}{W_{j,1}}\pa t_j-\frac 16 (W_{j,1}^2-1)\pa t_j.$$
Recall  the $W_{j,S}$ defined by (\ref{WkS}) and the $\bb,\cc$ defined by (\ref{bc}). The above SDE implies that
\BGE \frac{\pa_j W_{j,1}^{\bb}}{W_{j,1}^{\bb}}=\bb\frac{W_{j,2}}{W_{j,1}}\pa\ha w_j+\frac{\cc}6W_{j,S}\pa t_j-\frac {\bb}6 (W_{j,1}^2-1)\pa t_j. \label{paWj1b'}\EDE

Define a positive continuous function $M_{iB\to c4}$ on $\cal D$ by
$$M_{iB\to c4}:=e^{\frac {60}{8\kappa}\mA+\frac{\bb}{6}(\mA-t_1-t_2)}[W_{1,1}W_{2,1}V_{1,1}V_{2,1}]^{\bb} \Big [\prod_{j=1}^2\sin_2(W_j-V_j) \prod_{X,Y\in\{W,V\}} \sin_2(X_1-Y_2)\Big ]^{\frac 2\kappa}$$
\BGE \times \exp\Big(-\frac{\cc}6\int_0^{t_1}\!\int_0^{t_2} W_{1,1}^2W_{2,1}^2 \cot_2'''(W_1-W_2)ds_1ds_2\Big).\label{M-i-4'}\EDE
Combing (\ref{pamA},\ref{dWj1Vs1},\ref{pajWKS},\ref{dsin(Wj-Wk)}-\ref{paWj1b'}), we get
\BGE \frac{\pa _j M_{iB\to c4}}{M_{iB\to c4}}=\bb\frac{W_{j,2}}{W_{j,1}}\pa\ha w_j+  \frac 1\kappa \sum_{X\in\{W_k,V_1,V_2\}} \cot_2(W _j-X)W_{j,1}\pa \ha w_j.\label{pajM4}\EDE
This means that $M_{iB\to c4}^{(k,\tau_k)}$ is an $(\F^j_{t_j}\vee \F^k_\infty)_{t_j\ge 0}$-local martingale up to   $T^{\cal D}_j( \tau_k)$.


Let $F(x)$ be the hypergeometric function $ \,_2F_1\Big(\frac 4\kappa,1-\frac 4\kappa;\frac 8\kappa;x\Big)$ as before. Recall that $G(x)=\kappa x\frac{F'(x)}{F(x)}$, $\til F(x)  =x^{\frac 2\kappa} F(x)$, and $\til G(x)=\kappa x\frac{\til F'(x)}{\til F(x)}=G(x)+2$. From (\ref{ODE-hyper}) we get
$$\frac \kappa8 x^2\frac{\til F''(x)}{\til F(x)}=\Big[\Big(\frac 14-\frac 1\kappa \Big)\frac x{1-x}-\frac 1{2\kappa}\Big]\til G(x)+\frac 14\Big(\frac 6\kappa-1\Big) =0.$$ 
Recall that $W_1,V_1,W_2,V_2$ are real valued functions defined on $\cal D$ that satisfy $W_1>V_1>W_2>V_2>W_1-2\pi$. Define the  functions $R$ and $\Phi_j$ on $\cal D$ by
$$ R =\frac{\sin_2(W_1 -V_2 )\sin_2(V_1-W_2)}{\sin_2(W_1-W_2)\sin_2(V_1-V_2)}=-\frac{\sin_2(W_j-V_k)\sin_2(W_k-V_j)}{\sin_2(W_j-W_k)\sin_2(V_j-V_k)}\in(0,1).$$ 
$$\Phi_j=\cot_2(W_j-V_k)-\cot_2(W_j-W_k)=\frac{-\sin_2(W_k-V_k)}{\sin_2(W_j-V_k)\sin_2(W_j-W_k)}.$$ 
Note that $R$ equals the cross-ratio $[e^{iW_1},e^{iV_1};e^{iV_2},e^{iW_2}]$. Using  an identity of cross-ratio, we get
$$ 1-R=\frac{\sin_2(W_j-V_j)\sin_2(W_k-V_k)}{\sin_2(W_j-W_k)\sin_2(V_j-V_k)}.$$
Thus,
$$\frac{R\Phi_j}{1-R}=\frac{ \sin_2(W_k-V_j)}{\sin_2(W_j-V_j) \sin_2(W_j-W_k)}=\cot_2(W_j-W_k)-\cot_2(W_j-V_j).$$
Using (\ref{dsin(Wj-Wk)}-\ref{dsin(V1-V2)}), we get
\begin{align}
  \frac{\pa_j R}{R}=&\frac 12W _{j,1} \Phi_j\pa\ha w_j+\frac 12 [\cot_2(W _j-W _k)+\cot_2(W _j-V _k)] W _{j,1} ^2\Phi_j\pa t_j+\frac 12\Big(\frac{\kappa 	 }{2}-3\Big)W _{j,2}\Phi_j\pa t_j\nonumber \\
  &+\frac 12\cot_2 (W_j-V_j) W_{j,1} ^2 \Phi_j\pa t_j-\frac \kappa 4\cot_2(W _j-W _k)W _{j,1} ^2\Phi_j\pa t_j.\label{pajR/R}
\end{align}
Combining the above formulas in this paragraph and using a tedious but straightforward computation, we get
\begin{align}
  \frac{\pa_j \til F(R)}{\til F(R)}=& \frac 1{2\kappa}\til G(R)W _{j,1} \Phi_j\pa \ha w_j+\frac 1{2\kappa}\Big(\frac{\kappa 	 }{2}-3\Big)\til G(R)W _{j,2}\Phi_j\pa t_j \nonumber\\
  &+\frac 14\Big(\frac 6\kappa-1\Big)\cot_2(W_j-V_j)\til G(R)W _{j,1}^2  \Phi_j \pa t_j+\frac 14\Big(\frac 6\kappa-1\Big)  W _{j,1}^2  \Phi_j^2 \pa t_j.\label{paF/F}
\end{align}

Define another positive continuous function $M_{iB\to ch}$ on $\cal D$ by
$$M_{iB\to ch}:=e^{\frac{(\kappa-6)(\kappa-2)}{8\kappa} \mA+\frac{\bb}{6} (\mA-t_1-t_2)}\til F(R)  [W_{1,1}W_{2,1}V_{1,1}V_{2,1}]^{\bb}[\prod_{j=1}^2\sin_2(W_j-V_j) ]^{-2\bb} $$
\BGE \times \exp\Big(-\frac{\cc}6\int_0^{t_1}\!\int_0^{t_2} W_{1,1}^2W_{2,1}^2 \cot_2'''(W_1-W_2)ds_1ds_2\Big).\label{M-i-2}\EDE
Combining (\ref{pamA},\ref{dWj1Vs1},\ref{pajWKS},\ref{dsin(Wj-Vs)},\ref{dsin(Wk-Vs)},\ref{paWj1b'},\ref{paF/F}), we get
\BGE \frac{\pa _j M_{iB\to ch}}{M_{iB\to ch}}=\bb\frac{W_{j,2}}{W_{j,1}}\pa\ha w_j+\frac 1{2\kappa}\til G(R)W _{j,1} \Phi_j\pa\ha w_j-\bb\cot_2(W _j-V _j)W _{j,1}\pa\ha w_j.\label{pajM2}\EDE
This means that $M_{iB\to ch}^{(k,\tau_k)}$ is an $(\F^j_{t_j}\vee \F^k_\infty)_{t_j\ge 0}$-local martingale up to   $T^{\cal D}_j( \tau_k)$.

\subsection{Localization and Radon-Nikodym derivatives}\label{localization}
For $j=1,2$, let $\Xi_j$ denote the space of simple crosscuts of $\D$ that separate $e^{iw_j} $ from $e^{iv_1} $, $e^{iv_2} $, $e^{iw_{3-j}} $, and $0$. For $j=1,2$ and $\xi_j\in\Xi_j$, let $\tau^j_{\xi_j}$ be the first time that $\eta_j$ hits the closure of $\xi_j$. If such time does not exist, then $\tau^j_{\xi_j}$ is defined to be the lifetime of $\eta_j$. We see that $\tau^j_{\xi_j}$ is bounded above by the $\D$-capacity of the $\D$-hull generated by $\xi_j$, and so is finite.

Let $\Xi=\{(\xi_1,\xi_2)\in\Xi_1\times\Xi_2,\dist(\xi_1,\xi_2)>0\}$. For $\ulin\xi=(\xi_1,\xi_2)\in\Xi$, let $\tau_{\ulin\xi}=(\tau^1_{\xi_1},\tau^2_{\xi_2})$.
We may choose a countable set $\Xi^*\subset \Xi$ such that for every $\ulin\xi=(\xi_1,\xi_2)\in\Xi$ there is $(\xi_1^*,\xi_2^*)\in\Xi^*$ such that $\xi_j$ is enclosed by $\xi_j^*$, $j=1,2$. 

\begin{Lemma}
	For any $\ulin \xi\in\Xi$, $|\log(M_{iB\to c4})|$ and $|\log(M_{iB\to ch})|$ are uniformly bounded on $[\ulin 0,\tau_{\ulin\xi}]$ by constants depending only on $\kappa,\ulin\xi$. \label{uniform}
\end{Lemma}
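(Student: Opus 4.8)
The plan is to bound each of the finitely many factors in the product formulas (\ref{M-i-4'}) and (\ref{M-i-2}) from above and away from zero by constants depending only on $\kappa$ and $\ulin\xi$, since then the same bounds hold for $\log M_{iB\to c4}$ and $\log M_{iB\to ch}$. Write $\ulin\xi=(\xi_1,\xi_2)$ and let $H_j$ be the $\D$-hull generated by $\xi_j$. The first step is a geometric confinement: because $\dist(\xi_1,\xi_2)>0$ and $\xi_j$ separates $e^{iw_j}$ from $0,e^{iv_1},e^{iv_2},e^{iw_{3-j}}$, the closures $\lin{H_1},\lin{H_2}$ are disjoint compact subsets of $\lin\D$, and each of $\dist(\lin{H_1},\lin{H_2})$, $\dist(0,\lin{H_1}\cup\lin{H_2})$, $\dist(e^{iv_s},\lin{H_1}\cup\lin{H_2})$ and $\dist(e^{iw_j},\lin{H_{3-j}})$ is a positive constant depending only on $\ulin\xi$. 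For $(t_1,t_2)\in[\ulin0,\tau_{\ulin\xi}]$ one has $K_j(t_j)\subseteq H_j$ by definition of $\tau^j_{\xi_j}$, so $[\ulin0,\tau_{\ulin\xi}]\subseteq\cal D$ and all of $W_{j,h},V_j,V_{j,1},R,\mA$ are defined there; moreover $t_j=\dcap(K_j(t_j))\le\dcap(H_j)$ and, by monotonicity of $\dcap$, $0\le\mA(t_1,t_2)\le\dcap(H_1\cup H_2)=:C_{\ulin\xi}$. Combined with (\ref{mA-est}) this gives $\mA-t_1-t_2\in[-C_{\ulin\xi},0]$, so the exponential factors $e^{\frac{60}{8\kappa}\mA}$, $e^{\frac{(\kappa-6)(\kappa-2)}{8\kappa}\mA}$ and $e^{\frac\bb6(\mA-t_1-t_2)}$ are bounded above and below by positive constants depending only on $\kappa,\ulin\xi$.

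Next I would handle the conformal-analytic factors. From Section \ref{deterministic} we already know $W_{j,1}\in(0,1]$; the needed \emph{lower} bound $W_{j,1}\ge c_{\ulin\xi}>0$ and the two-sided bound $V_{j,1}\in[c_{\ulin\xi},C_{\ulin\xi}]$ follow from Koebe's distortion theorem applied to $g_{k,t_j}(t_k,\cdot)$ near $e^{i\ha w_j(t_j)}$ and to $g_{K(t_1,t_2)}$ near $e^{iv_j}$: the hulls being removed have $\dcap\le C_{\ulin\xi}$ and, thanks to $\lin{H_1}\cap\lin{H_2}=\emptyset$, lie at distance $\ge\delta_{\ulin\xi}>0$ from the evaluation point --- for $W_{j,1}$ this is exactly the uniform version of the statement in Lemma \ref{Loewner-KL} that $g_{K_j(t_j)}(K_k(t_k))$ has positive distance from $e^{i\ha w_j(t_j)}$, obtained by a compactness argument over the admissible sub-hulls of $\lin{H_j}$. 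For the $\sin_2$-factors I would show the four image points $e^{iW_1},e^{iV_1},e^{iW_2},e^{iV_2}$ divide $\TT$ into four arcs each of harmonic measure from $0$ bounded below by a constant depending only on $\ulin\xi$: each such arc is the $g_{K(t_1,t_2)}$-image of a boundary crosscut of $\D\sem K(t_1,t_2)$ containing a fixed-length sub-arc of $\TT$ that stays at positive distance from $K(t_1,t_2)$, and one invokes conformal invariance of harmonic measure and Beurling's estimate. Hence each difference $W_j-V_j$, $W_1-W_2$, $V_1-V_2$, $W_1-V_2$, $V_1-W_2$ lies in a compact subinterval of $(0,2\pi)$, all $\sin_2$ factors lie in $[c_{\ulin\xi},1]$, and in particular $R\in[r_{\ulin\xi},1-r_{\ulin\xi}]\subset(0,1)$, so $\til F(R)=R^{2/\kappa}F(R)$ is bounded above and away from $0$ by the continuity and positivity of $F$ on $[0,1]$. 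Finally, since $W_{1,1},W_{2,1}\in(0,1]$, $W_1-W_2$ stays away from the poles $2\pi\Z$ of $\cot_2'''$, and $t_1,t_2\le C_{\ulin\xi}$, the double integral $\int_0^{t_1}\!\int_0^{t_2}W_{1,1}^2W_{2,1}^2\cot_2'''(W_1-W_2)\,ds_1ds_2$ is bounded in absolute value by a constant depending only on $\ulin\xi$, so its exponential is controlled. Multiplying the bounds for all factors finishes the proof.

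The only step I expect to require genuine care is the passage from the geometric confinement to the uniform lower bounds in the second step --- controlling the distortion of conformal maps \emph{at boundary prime ends} abutting a growing hull, both for $W_{j,1}$ and for the non-collision of the four image points. For a fixed configuration this is immediate from continuity of the quantities on $\cal D$ together with compactness of $[\ulin0,\tau_{\ulin\xi}]$, but getting constants depending only on $\kappa$ and $\ulin\xi$ forces the explicit distortion/harmonic-measure estimates above, used in the spirit of the (omitted) proof of Lemma \ref{Loewner-KL}. Everything else is a bounded continuous function of quantities already shown to range over a compact set.
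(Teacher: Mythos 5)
Your overall strategy is the same as the paper's: bound each factor of (\ref{M-i-4'}) and (\ref{M-i-2}) separately, confine everything to the hull generated by $\xi_1\cup\xi_2$ to control $t_1,t_2,\mA$, and get the angular separations $W_1-V_1$, $V_1-W_2$, $W_2-V_2$, $V_2+2\pi-W_1$ bounded below via harmonic measure from $0$ of the four reference arcs (the paper uses plain monotonicity of harmonic measure under enlargement of the domain; your appeal to Beurling is unnecessary but harmless). The treatment of the double integral, of $\til F(R)$, and of the exponential prefactors is identical.

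The one place where you diverge is the two-sided bound on $W_{j,1}$ and $V_{j,1}$, and that is also the one place where your argument has a real gap. You propose Koebe distortion at the boundary point $e^{i\ha w_j(t_j)}$, which requires a \emph{uniform} positive lower bound on $\dist\big(e^{i\ha w_j(t_j)},\,g_{K_j(t_j)}(K_k(t_k))\big)$, i.e.\ a quantitative version of Lemma \ref{Loewner-KL}; you assert this "by a compactness argument over the admissible sub-hulls of $\lin{H_j}$," but the family of pairs of sub-hulls is not compact in any topology in which this distance is obviously lower semicontinuous, and even granting the distance bound you would still need to anchor the distortion estimate (e.g.\ at $0$ via $g_K'(0)=e^{\dcap(K)}$ through a chain of discs) to convert relative into absolute derivative bounds. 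The paper avoids all of this by integrating the exact ODE (\ref{dWj1Vs1}): since $\pa_j\log W_{k,1}=W_{j,1}^2\cot_2'(W_k-W_j)\,\pa t_j$ and $\pa_j\log V_{s,1}=W_{j,1}^2\cot_2'(V_s-W_j)\,\pa t_j$, the separation bounds you have already proved make $|\cot_2'|$ uniformly bounded along the configuration, and with $W_{j,1}\le 1$, $t_1,t_2\le \dcap(K_{\ulin\xi})$ and the initial values $W_{k,1}|_{t_j=0}\equiv 1$, $V_{s,1}(0,0)=1$, integration gives $|\log W_{k,1}|,|\log V_{s,1}|\le C_{\kappa,\ulin\xi}$ at once. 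I recommend replacing your distortion step by this ODE integration; everything else in your write-up stands.
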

\begin{proof}
	Fix $\ulin\xi=(\xi_1,\xi_2)\in\Xi$. Throughout this proof, a constant depends only on $\kappa,\ulin\xi$; by saying that a function is uniformly bounded on $[\ulin 0,\tau_{\ulin\xi}]$, we mean that it is bounded by a constant on $[\ulin 0,\tau_{\ulin\xi}]$. It suffices to show that $\mA$, $|\log(W_{j,1})|$, $|\log(V_{j,1})|$, $|\log\sin_2(X_1-Y_2)|$, $X,Y\in\{W,V\}$, $|\log\sin_2(W_1-V_1)|$, $|\log\sin_2(W_2-V_2)|$, $|\log(\til F(R))|$, and $|\int_0^{t_1}\!\int_0^{t_2} W_{1,1}^2W_{2,1}^2 \cot_2'''(W_1-W_2)ds_1ds_2|$
	are all uniformly bounded on $[\ulin 0,\tau_{\ulin\xi}]$.
	
	Let   $K_{\ulin \xi} $ be the $\D$-hull generated by $\xi_1\cup\xi_2$. Then $0\le t_1,t_2\le \mA$ are uniformly bounded by the constant $\dcap(K_{\ulin\xi})$ on $[\ulin 0,\tau_{\ulin\xi}]$.
	Note that $\TT\sem \lin{K_{\ulin\xi}}$ is a disjoint union of two arcs,  each of which contains one of $e^{iv_s }$, $s=1,2$. Denote the arcs $I_1$ and $I_2$ such that $e^{iv_s }\in I_s$, $j=1,2$. Each $I_s$ is divided by $e^{iv_s }$ into two open subarcs, which are denoted by $I_{s,1}$ and $I_{s,2}$ such that  $I_{s,j}$ shares one endpoint with $\xi_j$, $j=1,2$. Let the positive constant $c_{s,j}$ be the harmonic measure in $\D\sem K_{\ulin\xi}$ viewed from $0$ of the arc $I_{s,j}$.
	For any $\ulin t=(t_1,t_2)\in [\ulin 0,\tau_{\ulin\xi}]$, the harmonic measure in $\D\sem K_{(t_1,t_2)}$ viewed from $0$ of  the counterclockwise oriented arc from $e^{iv_1 }$ to the clockwise most point of $\eta_1([0,t])\cap\TT$ is bounded from below by $c_{1,1}$. Thus, $W_1-V_1\ge c_{1,1}*2\pi$ on $[\ulin 0,\tau_{\ulin\xi}]$. Similarly, $V_1 -W_2 \ge c_{1,2}*2\pi$, $W_2-V_2\ge c_{2,2}*2\pi$, and $V_2+2\pi-W_1\ge c_{2,1}*2\pi$ on $[\ulin 0,\tau_{\ulin\xi}]$. Let $S=\{W_1-V_1,W_2-V_2,W_1-V_2,W_1-W_2,V_1-W_2,V_1-V_2\}$. Then we see that $\sin_2(Z)$, $Z\in S$, are all bounded below by positive constants on $[\ulin 0,\tau_{\ulin\xi}]$.
	So we get the uniform boundedness of $|\log\sin_2(Z)|$, $|\cot_2(Z)|$, $|\cot_2'(Z)|$, and $|\cot_2'''(Z)|$ on $[\ulin 0,\tau_{\ulin\xi}]$. Since $0<W_{j,1} \le 1$ and $t_1,t_2$ are uniformly bounded, we get the uniform boundedness of $|\int_0^{t_1}\!\int_0^{t_2} W_{1,1}^2W_{2,1}^2 \cot_2'''(W_1-W_2)ds_1ds_2 |$ on $[\ulin 0,\tau_{\ulin\xi}]$.
	From (\ref{dWj1Vs1}) and that $W_{k,1}|_{t_j=0}\equiv 1$ and $V_{j,1}(0,0)=1$ we conclude that $\log(W_{j,1})$ and $\log(V_{j,1})$, $j=1,2$, are uniformly bounded on $[\ulin 0,\tau_{\ulin\xi}]$. From the definition of $R$ we know that $\log(R)$ is uniformly bounded on $[\ulin 0,\tau_{\ulin\xi}]$. Since $\til F(R)=R^{2/\kappa}F(R)$, and $F$ is positive and continuous on $[0,1]$, we see that $\log(\til F(R))$ is also uniformly bounded on $[\ulin 0,\tau_{\ulin\xi}]$.
\end{proof}


\begin{Corollary}
	For any $s\in\{4,h\}$ and $\ulin \xi\in\Xi$, $(M_{iB\to cs}(\ulin t\wedge \tau_{\ulin\xi}))_{\ulin t\in\cal Q}$ is an $(\F_{ \ulin t})$-$M_{iB\to cs}(\tau_{\ulin\xi})$-Doob martingale w.r.t.\ $\PP_{iB}$. \label{Doob}
\end{Corollary}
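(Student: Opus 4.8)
The plan is to establish the $M(\ulin\tau)$-Doob identity directly. Fix $s\in\{4,h\}$ and $\ulin\xi=(\xi_1,\xi_2)\in\Xi$, and abbreviate $M:=M_{iB\to cs}$, $\tau_j:=\tau^j_{\xi_j}$ ($j=1,2$), $\ulin\tau:=\tau_{\ulin\xi}=(\tau_1,\tau_2)$, and $N_{\ulin t}:=M(\ulin t\wedge\ulin\tau)$ for $\ulin t\in\cal Q$. Note first that $\ulin\tau\in\cal D$ (this is already implicit in Lemma \ref{uniform}, and follows from $\dist(\xi_1,\xi_2)>0$ together with the separation properties of $\xi_1,\xi_2$), so, as $\cal D$ is an HC region, $\ulin t\wedge\ulin\tau\le\ulin\tau$ lies in $\cal D$ for every $\ulin t\in\cal Q$; hence $N_{\ulin t}$ is well defined, and by Lemma \ref{uniform} it is bounded by a constant depending only on $\kappa,\ulin\xi$, so in particular $M(\ulin\tau)\in L^1(\PP_{iB})$. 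Moreover $N_{\ulin t}$ is $\F_{\ulin t}$-measurable: apply the argument of Lemma \ref{measurable} to the bounded stopping time $\ulin t\wedge\ulin\tau\le\ulin t$, and use Lemma \ref{T<S}. It then remains to show $N_{\ulin t}=\EE_{iB}[M(\ulin\tau)|\F_{\ulin t}]$ for every $\ulin t\in\cal Q$.

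The heart of the proof is a one-coordinate claim. Fix $j\ne k\in\{1,2\}$ and let $\sigma_k$ be an $\F^k_\infty$-measurable finite time with $\sigma_k\le\tau_k$. Then $t_j\mapsto M^{(k,\sigma_k)}(t_j\wedge\tau_j)$ is a bounded $(\F^j_{t_j}\vee\F^k_\infty)_{t_j\ge 0}$-martingale under $\PP_{iB}$, and consequently $\EE_{iB}[M^{(k,\sigma_k)}(\tau_j)|\F^j_{t_j}\vee\F^k_\infty]=M^{(k,\sigma_k)}(t_j\wedge\tau_j)$ for every $t_j\ge 0$. Indeed, by (\ref{pajM4}) (if $s=4$) or (\ref{pajM2}) (if $s=h$), $M^{(k,\sigma_k)}$ is an $(\F^j_{t_j}\vee\F^k_\infty)$-local martingale up to $T^{\cal D}_j(\sigma_k)$; since the point whose $k$-th coordinate is $\sigma_k$ and whose $j$-th coordinate is $\tau_j$ is $\le\ulin\tau\in\cal D$, we get $\tau_j<T^{\cal D}_j(\sigma_k)$, so $M^{(k,\sigma_k)}(\,\cdot\wedge\tau_j)$ is a genuine local martingale, and it is bounded by Lemma \ref{uniform} (its argument stays in $[\ulin 0,\ulin\tau]$), hence a true uniformly integrable martingale that is already stopped at the stopping time $\tau_j$. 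Letting $t_j\to\infty$ in the martingale property of this stopped process, and using continuity of $M^{(k,\sigma_k)}$ and bounded convergence so that $M^{(k,\sigma_k)}(t_j\wedge\tau_j)\to M^{(k,\sigma_k)}(\tau_j)$ in $L^1$, yields the displayed identity.

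I then assemble the two coordinates. Fix $\ulin t=(t_1,t_2)$. Applying the claim with $k=2$, $j=1$, $\sigma_2=\tau_2$ gives $\EE_{iB}[M(\tau_1,\tau_2)|\F^1_{t_1}\vee\F^2_\infty]=M(t_1\wedge\tau_1,\tau_2)$. Applying it with $k=1$, $j=2$, $\sigma_1=t_1\wedge\tau_1$ — which is $\F^1_{t_1}$-measurable (hence $\F^1_\infty$-measurable), finite, and $\le\tau_1$ — gives $\EE_{iB}[M(t_1\wedge\tau_1,\tau_2)|\F^1_\infty\vee\F^2_{t_2}]=M(t_1\wedge\tau_1,t_2\wedge\tau_2)=N_{\ulin t}$; since $N_{\ulin t}$ is $\F^1_{t_1}\vee\F^2_{t_2}=\F_{\ulin t}$-measurable, this identity persists with $\F^1_\infty\vee\F^2_{t_2}$ replaced by $\F_{\ulin t}$. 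As $\F_{\ulin t}\subset\F^1_{t_1}\vee\F^2_\infty$, the tower property now yields
\[\EE_{iB}[M(\ulin\tau)|\F_{\ulin t}]=\EE_{iB}\big[\,\EE_{iB}[M(\ulin\tau)|\F^1_{t_1}\vee\F^2_\infty]\,\big|\,\F_{\ulin t}\big]=\EE_{iB}[M(t_1\wedge\tau_1,\tau_2)|\F_{\ulin t}]=N_{\ulin t},\]
which is exactly the assertion.

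I expect the one-coordinate claim to be the main obstacle, less for any depth than for the care it requires: one must pass from the local-martingale identities (\ref{pajM4})–(\ref{pajM2}) to a genuine uniformly integrable martingale, which hinges on $\ulin\tau\in\cal D$ (so that the localizing time $\tau_j$ lies strictly below the blow-up time $T^{\cal D}_j(\sigma_k)$) and on the a priori bound of Lemma \ref{uniform}; the remaining work is routine bookkeeping with nested $\sigma$-fields in the separable filtration generated by $(\F^1_t)$ and $(\F^2_t)$.
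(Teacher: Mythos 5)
Your proposal is correct and follows essentially the same route as the paper: a one-coordinate martingale claim obtained from the local-martingale SDEs (\ref{pajM4},\ref{pajM2}) together with the uniform bound of Lemma \ref{uniform}, applied once in each coordinate and assembled by the tower property (the paper freezes the first coordinate at $\tau^1_{\xi_1}$ and conditions in $t_2$ first, while you do the coordinates in the opposite order, which is immaterial by symmetry).
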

\begin{proof}
	Let $s\in\{4,h\}$ and $\ulin \xi=(\xi_1,\xi_2)\in\Xi$. We need to show that, for any $\ulin t=(t_1,t_2)\in\cal Q$, \BGE \EE_{iB}[M_{iB\to cs}(\tau_{\ulin\xi})|\F_{ \ulin t}]=M_{iB\to cs}(\ulin t\wedge \tau_{\ulin\xi}).\label{Doob-margingale0}\EDE
	From (\ref{pajM4},\ref{pajM2}) we know that $M_{iB\to cs}(\tau^1_{\xi_1},t_2)$, $0\le t_2<T^{\cal D}_2(\tau^1_{\xi_1})$, is an $(\F^{(1,\infty)}_{t_2})_{t_2\ge 0}$-local martingale. By the previous lemma, $M_{iB\to cs}(\tau^1_{\xi_1},\cdot)$ is uniformly bounded on $[0,\tau^2_{\xi_2}]$. From the assumption on $(\xi_1,\xi_2)$, we see that $\tau^2_{\xi_2}<T^{\cal D}_2(\tau^1_{\xi_1})$. So $M_{iB\to cs}(\tau^1_{\xi_1},\cdot\wedge \tau^2_{\xi_2})$ is an $(\F^{(1,\infty)}_{t_2})_{t_2\ge 0}$-$M_{iB\to cs}(\tau^1_{\xi_1}, \tau^2_{\xi_2})$-Doob-martingale. This means that
	\BGE \EE_{iB}[M_{iB\to cs}(\tau^1_{\xi_1}, \tau^2_{\xi_2})|\F^1_\infty\vee \F^2_{t_2}]=M_{iB\to cs}(\tau^1_{\xi_1},t_2\wedge \tau^2_{\xi_2}).\label{Doob-margingale1}\EDE
A similar argument using $M_{iB\to cs}(\cdot,t_2\wedge \tau^2_{\xi_2})$ in place of $M_{iB\to cs}(\tau^1_{\xi_1},\cdot)$ implies that
\BGE\EE_{iB}[M_{iB\to cs}(\tau^1_{\xi_1},t_2\wedge \tau^2_{\xi_2})|\F^1_{t_1}\vee \F^2_{\infty}]=M_{iB\to cs}(t_1\wedge \tau^1_{\xi_1},t_2\wedge \tau^2_{\xi_2}). \label{Doob-margingale2}\EDE
Since
$$M_{iB\to cs}(t_1\wedge \tau^1_{\xi_1},t_2\wedge \tau^2_{\xi_2})\in \F_{(t_1\wedge \tau^1_{\xi_1},t_2\wedge \tau^2_{\xi_2})}\subset \F_{(t_1,t_2)}= \F^1_{t_1}\vee \F^2_{t_2}\subset \F^1_{t_1}\vee \F^2_{\infty},$$
(\ref{Doob-margingale2}) implies that
	\BGE \EE_{iB}[M_{iB\to cs}(\tau^1_{\xi_1},t_2\wedge \tau^2_{\xi_2})|\F^1_{t_1}\vee \F^2_{t_2}]=M_{iB\to cs}(t_1\wedge \tau^1_{\xi_1},t_2\wedge \tau^2_{\xi_2}).\label{Doob-margingale3}\EDE
 Combining (\ref{Doob-margingale1},\ref{Doob-margingale3}) and using $\F^1_{t_1}\vee \F^2_{t_2}\subset \F^1_\infty\vee \F^2_{t_2}$, we get (\ref{Doob-margingale0}).
\end{proof}

The above corollary implies in particular that for any $s\in\{4,h\}$ and $\ulin\xi\in\Xi$,   we may define a probability measure $\PP_{cs}^{\ulin \xi}$ by $\frac{d \PP_{cs}^{\ulin \xi}}{d\PP_{iB}}=\frac{M_{iB\to cs}(\tau_{\ulin\xi})}{M_{iB\to cs}(\ulin 0)}$. Suppose $(\ha w_1,\ha w_2)$ follows the law $\PP_{cs}^{\ulin \xi}$. We now describe the behavior of the radial Loewner curves $\eta_1$ and $\eta_2$ driven by $\ha w_1$ and $\ha w_2$, respectively.
Fix $j\ne k\in\{1,2\}$. Let $\tau_k$ be an $(\F^k_{t_k})$-stopping time such that $\tau_k\le \tau^k_{\xi_k}$. From Lemma \ref{OST} and Corollary \ref{Doob}, for any $t_j\ge 0$,
$$\frac{d \PP_{cs}^{\ulin \xi}|\F^j_{t_j}\vee \F^k_{\tau_k}}{d\PP_{iB}|\F^j_{t_j}\vee \F^k_{\tau_k}}=\frac{M_{iB\to cs}^{(k,\tau_k)}(t_j\wedge \tau^j_{\xi_j})} {M_{iB\to cs}^{(k,\tau_k)}(0)}.$$
From Girsanov Theorem and (\ref{pajM4},\ref{pajM2}), we see that, under $\PP_{c4}^{\ulin \xi}$ and $\PP_{ch}^{\ulin \xi}$,  $\ha w_j$ respectively satisfies  the following two SDEs up to $\tau^j_{\xi_j}$:
\begin{align*}
  \pa\ha w_j=&\sqrt\kappa \pa B^4_{j,\tau_k}+\kappa \bb \frac{W_{j,2}^{(k,\tau_k)}}{W_{j,1}^{(k,\tau_k)}}\pa t_j+   \sum_{X\in\{W_k,V_1,V_2\}} \cot_2(W _j^{(k,\tau_k)}-X^{(k,\tau_k)})W^{(k,\tau_k)}_{j,1}\pa t_j,\\
  \pa\ha w_j=&\sqrt\kappa \pa B^h_{j,\tau_k}+\kappa \bb \frac{W_{j,2}^{(k,\tau_k)}}{W_{j,1}^{(k,\tau_k)}}\pa t_j+ \frac 1{2}\til G(R^{(k,\tau_k)})W _{j,1}^{(k,\tau_k)} \Phi_j^{(k,\tau_k)}\pa t_j \\ &-\kappa \bb\cot_2(W _j^{(k,\tau_k)}-V _j^{(k,\tau_k)})W _{j,1}^{(k,\tau_k)}\pa t_j,
\end{align*}
where $B^s_{j,\tau_k}(t_j)$  is a standard $(\F^j_{t_j}\vee \F^k_{\tau_k})_{t_j\ge 0}$-Brownian motion under $\PP_{cs}^{\ulin \xi}$, $s\in\{4,h\}$. Using (\ref{Wj},\ref{-3}) we get the SDE satisfied by $W_j^{(k,\tau_k)}$ under $\PP_{c4}^{\ulin \xi}$ and $\PP_{ch}^{\ulin \xi}$, respectively, up to $\tau^j_{\xi_j}$:
\begin{align*}
  \pa W_j^{(k,\tau_k)}=&\sqrt\kappa W_{j,1}^{(k,\tau_k)} \pa B^4_{j,\tau_k} +   \sum_{X\in\{W_k,V_1,V_2\}} \cot_2(W _j^{(k,\tau_k)}-X^{(k,\tau_k)})(W^{(k,\tau_k)}_{j,1})^2\pa t_j,\\
  \pa W_j^{(k,\tau_k)}=&\sqrt\kappa W_{j,1}^{(k,\tau_k)} \pa B^h_{j,\tau_k} + \frac 1{2}\til G(R^{(k,\tau_k)})\Phi_j^{(k,\tau_k)}(W _{j,1}^{(k,\tau_k)})^2 \pa t_j \\ &-\kappa \bb\cot_2(W _j^{(k,\tau_k)}-V _j^{(k,\tau_k)})(W _{j,1}^{(k,\tau_k)})^2\pa t_j.
\end{align*}
Recall the ODE (\ref{dWkVs}) satisfied by $W_k$ and $V_s$, $s=1,2$.
This implies that, under $\PP_{c4}^{\ulin \xi}$, conditionally on $\F^k_{\tau_k}$, $\eta_{j,\tau_k}(t_j)=g_k(\tau_k,\eta_j(t_j))$ is a radial SLE$_\kappa(2,2,2)$ curve with speed $(W^{(k,\tau_k)}_{j,1})^2$ started from $e^i(W_j^{(k,\tau_k)}(0))=g_k(\tau_k,e^{i\ha w_j(0)})$ with force points
 $e^i(W_k^{(k,\tau_k)}(0))=e^{i\ha w_k(0)}=g_k(\tau_k,\eta_k(\tau_k))$, $e^i( V_j^{(k,\tau_k)}(0))=g_k(\tau_k,e^{iv_j})$ and $e^i( V_k^{(k,\tau_k)}(0))=g_k(\tau_k,e^{iv_k})$, up to $\tau^j_{\xi_j}$; and under $\PP_{ch}^{\ulin \xi}$,
 conditionally on $\F^k_{\tau_k}$, $g_k(\tau_k,\eta_j(t_j))$ is a radial hSLE$_\kappa$ curve in $\D$ from $g_k(\tau_k,e^{i\ha w_j(0)})$ to $g_k(\tau_k,e^{iv_j})$ with force points $g_k(\tau_k,e^{iv_k})$ and $g_k(\tau_k,\eta_k(\tau_k))$, up to $\tau^j_{\xi_j}$, viewed from $0$. In particular, taking $\tau_k=0$, we see that the $j$-th marginal measure of $\PP_{cs}^{\ulin \xi}$ restricted to $\F^j_{\tau^j_{\xi_j}}$ agrees with $\PP^j_s$ restricted to $\F^j_{\tau^j_{\xi_j}}$. This means that the radial Loewner curves driven by $\ha w_1$ and $\ha w_2$, which jointly follow the law $\PP_{c4}^{\ulin \xi}$ (resp.\ $\PP_{ch}^{\ulin \xi}$), respectively stopped at $\tau^1_{\xi_1}$ and $\tau^2_{\xi_2}$, are two radial SLE$_\kappa(2,2,2)$ (resp.\ radial hSLE$_\kappa$) curves that locally commute with each other  in the sense of \cite{Julien}. Recall that $\PP_{2}$ is the joint law of the radial Loewner driving functions for a $2$-SLE$_\kappa$ in $\D$ with link pattern $(e^{iw_1}\to e^{iv_1};e^{iw_2}\to e^{iv_2})$ up to certain separation times.
 Because of the commutation relation between the two curves in a $2$-SLE$_\kappa$, we find that $\PP_{ch}^{\ulin \xi}|\F_{\ulin\xi}=\PP_{2}|\F_{\ulin\xi}$. 

Using the stochastic coupling technique developed and used in \cite{reversibility,duality} we may construct a probability measure  $\PP_{c4}$  on  $\Sigma\times\Sigma$ such that for any $\ulin\xi\in\Xi$, $\PP_{c4}|\F_{\tau_{\ulin\xi}}=\PP_{c4}^{\ulin \xi}|\F_{\tau_{\ulin\xi}}$.
Here is a brief review of the stochastic coupling technique for the setup here.  From (\ref{pajM4},\ref{pajM2}) and Girsanov Theorem we know that $M^{(k,0)}_{iB\to c4}(\tau^j_{\xi_j})/M_{iB\to c4}(0,0)$ is the Radon-Nikodym derivative of $\PP^j_4|\F^j_{\tau^j_{\xi_j}}$ against $\PP^j_B|\F^j_{\tau^j_{\xi_j}}$.
Define $M_{i4\to c4}$ on $\cal D$ by
$$M_{i4\to c4}(t_1,t_2)=\frac{M_{iB\to c4}(t_1,t_2)M_{iB\to c4}(0,0)}{M_{iB\to c4}(t_1,0)M_{iB\to c4}(0,t_2)}.$$
Then $M_{i4\to c4}(t_1,t_2)=1$ if $t_1\cdot t_2=0$; and under the probability measure $\PP_{i4}=\PP^1_4\times\PP^2_4$, for any finite $(\F^k_{t_k})$-stopping time $\tau_k$, $M^{(k,\tau_k)}_{i4\to c4}(t_j)$ is a local martingale. From Lemma \ref{uniform} we know that, for any $\ulin\xi \in\Xi$, $|\log M_{i4\to c4}|$ is bounded on $[\ulin 0,\tau_{\ulin\xi}]$.
Let $(\ulin\xi^k)_{k\in\N}$ be an enumeration of $\Xi^*$. From \cite[Theorem 6.1]{reversibility} we know that, for any $n\in\N$, there is a uniformly bounded $(\F_{\ulin t})_{\ulin t\in\cal Q}$-Doob-martingale $M^{(n)}_{i4\to c4}$ defined on $[0,\infty]\times[0,\infty]$ such that $M_{i4\to c4}^{(n)}(t_1,t_2)=1$ if $t_1\cdot t_2=0$, and for any $1\le k\le n$, $M_{i4\to c4}^{(n)}$ agrees with $M_{i4\to c4}$ on $[\ulin 0, \tau_{\ulin\xi^k}]$. We may then define a sequence of probability measures $\PP^{(n)}_{c4}$, $n\in\N$, by $d\PP^{(n)}_{c4}= M_{i4\to c4}^{(n)}(\infty,\infty) d\PP_{i4}$. Then every $\PP^{(n)}_{c4}$ is a coupling of $\PP^1_4$ and $\PP^2_4$, and for $1\le k\le n$,
$\frac{d \PP_{c4}^{(n)}|\F_{\tau_{\ulin \xi^k}}}{d\PP_{iB}|\F_{\tau_{\ulin \xi^k}}}
=\frac{M_{iB\to c4}(\tau_{\ulin\xi^k})}{M_{iB\to c4}(\ulin 0)}$.
By a tightness argument, $(\PP_{c4}^{(n)})$ contains a weakly convergent subsequence. Let $\PP_{c4}$ denote any subsequential limit. Then for any $\ulin\xi\in\Xi^*$,
$\frac{d \PP_{c4} |\F_{\tau_{\ulin \xi }}}{d\PP_{iB}|\F_{\tau_{\ulin \xi }}}=\frac{M_{iB\to c4}(\tau_{\ulin\xi })}{M_{iB\to c4}(\ulin 0)}$.
Since for every $\ulin\xi\in\Xi$, there is $\ulin\xi^*\in\Xi^*$ such that $\tau_{\ulin \xi }\le \tau_{\ulin \xi ^*}$, by the martingale property of $M_{iB\to c4}(\cdot\wedge \tau_{\ulin \xi^* })$, we get $\PP_{c4} |\F_{\tau_{\ulin \xi }}=\PP_{c4}^{\ulin\xi} |\F_{\tau_{\ulin \xi }}$, as desired.

We may use the same idea to construct $\PP_{ch}$. It satisfies  $\PP_{ch}|\F_{\tau_{\ulin\xi}}=\PP_{ch}^{\ulin \xi}|\F_{\tau_{\ulin\xi}}=\PP_{2}|\F_{\tau_{\ulin\xi}}$ for any $\xi\in\Xi$. At this moment we do not have a proof showing that $\PP_{ch}=\PP_{2}$, and we do not need this result.  We now have the following lemma.

\begin{Lemma}
	For  any finite $(\F_{ \ulin t})_{\ulin t\in\cal Q}$-stopping time $\ulin T$,
	$$\frac{d \PP_{c4} |\F_{\ulin T}\cap\{\ulin T\in\cal D\}}{d\PP_{iB}|\F_{\ulin T}\cap\{\ulin T\in\cal D\}}=\frac{M_{iB\to c4}(\ulin T)}{M_{iB\to c4}(\ulin 0)},\quad \frac{d \PP_{2} |\F_{\ulin T}\cap\{\ulin T\in\cal D\}}{d\PP_{iB}|\F_{\ulin T}\cap\{\ulin T\in\cal D\}}=\frac{M_{iB\to ch}(\ulin T)}{M_{iB\to ch}(\ulin 0)}.$$ \label{Lemma-RN-T}
\end{Lemma}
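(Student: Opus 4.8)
\textit{Proof plan.} The plan is to bootstrap from the Radon--Nikodym identities already available at the localizing times $\tau_{\ulin\xi}$, $\ulin\xi\in\Xi$ --- namely $\PP_{c4}|\F_{\tau_{\ulin\xi}}$ has density $M_{iB\to c4}(\tau_{\ulin\xi})/M_{iB\to c4}(\ulin 0)$ against $\PP_{iB}$, and $\PP_{2}|\F_{\tau_{\ulin\xi}}$ (which equals $\PP_{ch}^{\ulin\xi}|\F_{\tau_{\ulin\xi}}$) has density $M_{iB\to ch}(\tau_{\ulin\xi})/M_{iB\to ch}(\ulin 0)$ --- to an arbitrary finite $(\F_{\ulin t})_{\ulin t\in\cal Q}$-stopping time $\ulin T$, at the price of intersecting with $\{\ulin T\in\cal D\}$. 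I will only treat $\PP_{c4}$; the argument for $\PP_{2}$ is word-for-word the same with $M_{iB\to ch}$ replacing $M_{iB\to c4}$ and Corollary~\ref{Doob} invoked with $s=h$. Enumerate $\Xi^*=\{\ulin\xi^k\}_{k\in\N}$ and put $F_k=\{\ulin T\le\tau_{\ulin\xi^k}\}$. Since each $\tau_{\ulin\xi^k}$ is separable, Lemma~\ref{separable-lemma} gives $F_k\in\F_{\ulin T}$, and Lemma~\ref{T<S}(iii) applied with $f\equiv 1$ gives $F_k\in\F_{\tau_{\ulin\xi^k}}$. A routine check with crosscuts --- the closed region enclosed by $\xi_j^k$ has positive distance from $e^{iv_1},e^{iv_2}$ and from the region enclosed by $\xi_{3-j}^k$ --- shows $F_k\subset\{\ulin T\in\cal D\}$; conversely, if $\ulin T(\omega)\in\cal D(\omega)$, then choosing crosscuts hugging the disjoint compact hulls $\lin{K_1(T_1)}$, $\lin{K_2(T_2)}$ and enlarging them inside $\Xi^*$ (which only increases $\tau_{\ulin\xi}$) produces some $k$ with $\ulin T(\omega)\le\tau_{\ulin\xi^k}(\omega)$. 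Hence $\{\ulin T\in\cal D\}=\bigcup_k F_k\in\F_{\ulin T}$.

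The core step is: for any $k\in\N$ and any $B\in\F_{\ulin T}$ with $B\subset F_k$, one has $\PP_{c4}[B]=\EE_{iB}[{\bf 1}_B\,M_{iB\to c4}(\ulin T)]/M_{iB\to c4}(\ulin 0)$. Indeed $B=B\cap\{\ulin T\le\tau_{\ulin\xi^k}\}$, so ${\bf 1}_B$ is $\F_{\tau_{\ulin\xi^k}}$-measurable by Lemma~\ref{T<S}(iii), i.e.\ $B\in\F_{\tau_{\ulin\xi^k}}$; applying the known identity at $\tau_{\ulin\xi^k}$ gives $\PP_{c4}[B]=\EE_{iB}[{\bf 1}_B\,M_{iB\to c4}(\tau_{\ulin\xi^k})]/M_{iB\to c4}(\ulin 0)$. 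By Corollary~\ref{Doob}, $(M_{iB\to c4}(\ulin t\wedge\tau_{\ulin\xi^k}))_{\ulin t\in\cal Q}$ is a continuous $M_{iB\to c4}(\tau_{\ulin\xi^k})$-Doob martingale --- its uniform boundedness on $[\ulin 0,\tau_{\ulin\xi^k}]$ from Lemma~\ref{uniform} gives continuity of the stopped process and integrability of the terminal value --- so the Optional Stopping Theorem, Lemma~\ref{OST}(i), applied at the finite time $\ulin T$, yields $\EE_{iB}[M_{iB\to c4}(\tau_{\ulin\xi^k})\mid\F_{\ulin T}]=M_{iB\to c4}(\ulin T\wedge\tau_{\ulin\xi^k})$. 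Since ${\bf 1}_B$ is $\F_{\ulin T}$-measurable and $\ulin T\wedge\tau_{\ulin\xi^k}=\ulin T$ on $B\subset F_k$, the core step follows.

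To conclude, disjointify $F_k':=F_k\sem\bigcup_{l<k}F_l\in\F_{\ulin T}$, so the $F_k'$ are pairwise disjoint with $\bigsqcup_k F_k'=\{\ulin T\in\cal D\}$ and $F_k'\subset F_k$. For any $A\in\F_{\ulin T}$, apply the core step to $B=A\cap F_k'$ and sum over $k$ using $\sigma$-additivity of $\PP_{c4}$ and of $\nu(\cdot):=\EE_{iB}[{\bf 1}_{\cdot}\,M_{iB\to c4}(\ulin T)]/M_{iB\to c4}(\ulin 0)$ (a measure, finite on subsets of $\{\ulin T\in\cal D\}$ since it agrees there with $\PP_{c4}$): this gives $\PP_{c4}[A\cap\{\ulin T\in\cal D\}]=\EE_{iB}[{\bf 1}_{A\cap\{\ulin T\in\cal D\}}\,M_{iB\to c4}(\ulin T)]/M_{iB\to c4}(\ulin 0)$ for every $A\in\F_{\ulin T}$, which is the asserted identity $\frac{d\PP_{c4}|\F_{\ulin T}\cap\{\ulin T\in\cal D\}}{d\PP_{iB}|\F_{\ulin T}\cap\{\ulin T\in\cal D\}}=\frac{M_{iB\to c4}(\ulin T)}{M_{iB\to c4}(\ulin 0)}$; the same computation with $M_{iB\to ch}$, $\PP_{2}$ in place of $M_{iB\to c4}$, $\PP_{c4}$ gives the second identity. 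The main obstacle is not any analytic estimate but the measurability bookkeeping: pushing the correct indicator functions through Lemma~\ref{T<S} into the shifted $\sigma$-fields $\F_{\tau_{\ulin\xi^k}}$, together with the geometric covering statement $\{\ulin T\in\cal D\}=\bigcup_k F_k$, which rests on the relative openness of $\cal D$ and the cofinality of $\Xi^*$ among crosscut pairs.
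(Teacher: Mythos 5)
Your proposal is correct and follows essentially the same route as the paper's own proof: decompose $\{\ulin T\in\cal D\}$ as $\bigcup_{\ulin\xi\in\Xi^*}\{\ulin T\le\tau_{\ulin\xi}\}$, use Lemma \ref{separable-lemma} and Lemma \ref{T<S} to place the relevant events in both $\F_{\ulin T}$ and $\F_{\tau_{\ulin\xi}}$, invoke the Radon--Nikodym identities already established at the localizing times $\tau_{\ulin\xi}$, and apply optional stopping (Lemma \ref{OST} with Corollary \ref{Doob}) to replace $M_{iB\to cs}(\tau_{\ulin\xi})$ by $M_{iB\to cs}(\ulin T)$ on $\{\ulin T\le\tau_{\ulin\xi}\}$, handling $\PP_2$ via its agreement with $\PP_{ch}^{\ulin\xi}$ on $\F_{\tau_{\ulin\xi}}$. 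The only cosmetic differences are your explicit disjointification of the covering events and the added justification of the covering identity, both of which the paper leaves implicit.
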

\begin{proof}
	We first work on $\PP_{c4}$. We have
	$\{\ulin T\in{\cal D}\}=\bigcup_{\ulin \xi\in\Xi^*}\{\ulin T\le  \tau_{\ulin\xi}\}$.
	Since by Lemma \ref{separable-lemma}, $\{\ulin T\le  \tau_{\ulin\xi}\}\in\F_{ \ulin T}$, it suffices to show that, for any $\ulin \xi\in\Xi^*$,
	\BGE \frac{d \PP_{c4} |\F_{\ulin T}\cap\{\ulin T\le  \tau_{\ulin\xi}\}}{d\PP_{iB}|\F_{\ulin T}\cap\{\ulin T\le  \tau_{\ulin\xi}\}}=\frac{M_{iB\to c 4}(\ulin T)}{M_{iB\to c4}(\ulin 0)}.\label{RN}\EDE
	By Lemma \ref{OST} and Corollary \ref{Doob}, we see that
	$\EE_{iB}[M_{iB\to c4}(\tau_{\ulin\xi})|\F_{ \ulin T}]=M_{iB\to c4}(\ulin T\wedge \tau_{\ulin\xi})$.
	Let $A\in \F_{ \ulin T}$ with $A\subset \{\ulin T\le \tau_{\ulin\xi}\}$. Then we have
	$\EE_{iB}[{\bf 1}_A M_{iB\to c4}(\tau_{\ulin\xi})]=\EE_{iB}[{\bf 1}_A M_{iB\to c4}(\ulin T)]$.
	Since $\frac{d \PP_{c4} |\F_{\tau_{\ulin\xi}}}{d\PP_{iB}|\F_{\tau_{\ulin\xi}}}=\frac{M_{iB\to c4}(\tau_{\ulin\xi})}{M_{iB\to c4}(\ulin 0)}$, and $A\subset \F_{\tau_{\ulin\xi}}$ by Lemma \ref{T<S}, we get
	$$M_{iB\to c4}(\ulin 0)\PP_{c4}[A]=\EE_{iB}[{\bf 1}_A M_{iB\to c4}(\tau_{\ulin\xi})]=\EE_{iB}[{\bf 1}_A M_{iB\to c4}(\ulin T)].$$
	Since this holds for any $A\in \F_{ \ulin T}$ with $A\subset \{\ulin T\le \tau_{\ulin\xi}\}$, we get (\ref{RN}) as desired.

A similar argument shows that (\ref{RN}) holds with $c4$ replaced by $ch$. Since $\F_{\ulin T}\cap\{\ulin T\le  \tau_{\ulin\xi}\}\subset \F_{\ulin\xi}$ and $\PP_{ch}$ agrees with $\PP_{2}$ on $\F_{\ulin\xi}$, we find that (\ref{RN}) holds with  $M_{iB\to c4}$ replaced by $M_{iB\to ch}$ and $\PP_{c4}$ replaced by $\PP_{2}$. So we obtain the second equality.
\end{proof}


We need the following lemma about the lifetime of a radial SLE$_\kappa(\ulin \rho)$ curve.

\begin{Lemma}
Let $\kappa>0$, $n\in\N$. Suppose $\ulin\rho=(\rho_1,\dots,\rho_n)\in\R^n$ satisfies $\rho_1,\rho_n\ge \frac\kappa 2-2$ and $\rho_k\ge 0$, $1\le k\le n$.  Let $e^{iw},e^{iv_1},\dots,e^{iv_n}$ be distinct points on $\TT$ such that $w>v_1>\cdots>v_n>w-2\pi$.  Let $\eta(t)$, $0\le t<T$, be a radial SLE$_\kappa(\ulin\rho)$ curve in $\D$ started from $e^{iw}$ aimed at $0$ with force points $e^{iv_1},\dots,e^{iv_n}$. Then a.s.\ $T=\infty$, $0$ is a subsequential limit of $\eta(t)$ as $t\to \infty$, and $\eta$ does not hit the arc $J:=\{e^{i\theta}:v_1\ge \theta\ge v_n\}$. \label{transience}
\end{Lemma}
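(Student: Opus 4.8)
The plan is to pass to the covering radial Loewner equation and track the gaps between the driving function and the images of the force points, showing that none of these gaps ever degenerates. Let $\ha w,\ha v_1,\dots,\ha v_n$ solve the radial SLE$(\kappa;\ulin\rho)$ system on its maximal interval $[0,T)$, so that $\ha v_j(t)=\til g_t(v_j)$, and set $X_j(t)=\ha w(t)-\ha v_j(t)$, so $0<X_1(0)<\cdots<X_n(0)<2\pi$. Two preliminary observations: since the $\ha v_j$ all solve the same ODE $dx=\cot_2(x-\ha w)\,dt$ with distinct initial data they never cross, so the ordering $0<X_1<\cdots<X_n<2\pi$ persists on $[0,T)$; and since $\cot_2$ is decreasing on $(0,2\pi)$, $d(X_n-X_1)=d(\ha v_1-\ha v_n)=(\cot_2(X_n)-\cot_2(X_1))\,dt$, which is negative because $X_1<X_n$, so the span satisfies $X_n-X_1\le\Delta:=v_1-v_n<2\pi$ throughout. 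Because the $X_j$ all lie between $X_1$ and $X_n$, the system can break down only if $X_1\to0$ or $X_n\to2\pi$ as $t\uparrow T$, so it suffices to rule out both at any finite time.

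Using It\^o's formula and $-\cot_2(\ha v_1-\ha w)=\cot_2(X_1)$,
\[dX_1=\sqrt\kappa\,dB+\Big(\tfrac{\rho_1+2}{2}\cot_2(X_1)+\sum_{j=2}^n\tfrac{\rho_j}{2}\cot_2(X_j)\Big)dt.\]
Set $\eps_0:=(2\pi-\Delta)/2>0$. On the event $\{X_1<\eps_0\}$ one has $X_j\le X_n\le X_1+\Delta<2\pi-\eps_0$ for every $j$, so using $\rho_j\ge0$ the sum over $j\ge2$ is bounded below by a constant $-C_1$ depending only on $\kappa$, $\ulin\rho$ and $v_1,\dots,v_n$; hence on $\{X_1<\eps_0\}$,
\[dX_1\ge\sqrt\kappa\,dB+b(X_1)\,dt,\qquad b(x):=\tfrac{\rho_1+2}{2}\cot_2(x)-C_1=\frac{\rho_1+2}{x}-C_1+O(x)\quad(x\to0^+).\]
The hypothesis $\rho_1\ge\tfrac\kappa2-2$ is exactly $\rho_1+2\ge\tfrac\kappa2$, and then the scale density of the one-dimensional diffusion $dY=\sqrt\kappa\,dB+b(Y)\,dt$ behaves like $y^{-2(\rho_1+2)/\kappa}$ as $y\to0^+$, so its scale function diverges at $0$; thus $0$ is an inaccessible boundary point and $Y$ started in $(0,\eps_0)$ does not reach $0$ in finite time. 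Comparing $X_1$ with such a $Y$ (driven by the same $B$) on each excursion of $X_1$ into $(0,\eps_0)$ shows that $X_1$ never reaches $0$ at a finite time on $[0,T)$. Reflecting the configuration across $e^{i\ha w}$ swaps $\rho_1$ with $\rho_n$ and turns $X_n$ into the analogue of $2\pi-X_1$, so the same argument with $\rho_n\ge\tfrac\kappa2-2$ shows $X_n$ never reaches $2\pi$ at a finite time. Hence no $X_j$ degenerates at a finite time, and therefore $T=\infty$ a.s.

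It remains to prove the other two assertions. Since $T=\infty$, $\dcap(K_t)=t\to\infty$, so the conformal radius of $U_t:=\D\setminus K_t$ viewed from $0$ equals $e^{-t}$, and Koebe's $1/4$ theorem gives $\tfrac14 e^{-t}\le\dist(0,\pa U_t)\le e^{-t}$. For $t>0$ the boundary point of $U_t$ nearest $0$ must lie in $\D$ (the part of $\pa U_t$ on $\TT$ is at distance $1$), hence on $\eta([0,t])$, so $\dist(0,\eta([0,t]))\le e^{-t}\to0$. Since also $\dist(0,\eta([0,M]))\ge\tfrac14 e^{-M}>0$ for each finite $M$, the minimizing parameter $s_t$ satisfies $s_t\to\infty$ and $\eta(s_t)\to0$, so $0$ is a subsequential limit of $\eta(t)$. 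Finally, suppose $\eta(t_0)=e^{i\theta_0}$ with $v_n\le\theta_0\le v_1$. Then $e^{i\theta_0}\in\lin{K_{t_0}}$, and $\lin{K_{t_0}}\cap\TT$ is a closed subarc of $\TT$ containing both $e^{iw}$ and $e^{i\theta_0}$. Since $e^{iw}$ lies in the open complementary arc $\{e^{i\theta}:v_1<\theta<v_n+2\pi\}$ while $e^{i\theta_0}\in J$, that subarc must contain $e^{iv_1}$ or $e^{iv_n}$; but swallowing $e^{iv_1}$ (resp.\ $e^{iv_n}$) at the finite time $t_0$ would force $X_1(t)\to0$ (resp.\ $X_n(t)\to2\pi$) as $t\uparrow t_0$, contradicting the previous paragraph. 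Hence $\eta$ never hits $J$.

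The crux of the proof is the Bessel-type estimate of the second paragraph: one must control the drift contribution of the remaining force points when the configuration is nearly degenerate, which is exactly what the span bound $X_n-X_1\le\Delta<2\pi$ provides, and one must treat the borderline case $\rho_1=\tfrac\kappa2-2$, where divergence of the scale function has to be verified with the bounded perturbation $-C_1$ present and not merely for the pure inverse-distance drift. Everything else --- the preservation of order among the force points, the Koebe estimate, and the topological observation that touching $J$ forces a force point to be swallowed --- is routine.
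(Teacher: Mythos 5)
Your proof is correct, but it rules out finite-time degeneration by a genuinely different mechanism than the paper. The paper conditions on which block of force points collapses: it enumerates the events $E^0_{n'}$, $E^{2\pi}_{n'}$, uses Girsanov to discard the force points that stay away from the driving function (their contribution to the drift is then bounded, so the two laws are mutually absolutely continuous on the relevant event), and compares the \emph{largest} collapsing gap $X_{n'}=\ha w-\ha v_{n'}$ of the reduced process with a clean radial Bessel process of dimension $\ge 2$. The key point there is that for the reduced process the domination $X_{n'}\ge Y$ holds \emph{globally}, since every surviving force point satisfies $X_j\le X_{n'}$ and hence contributes nonnegatively relative to $\cot_2(X_{n'})$. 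You instead work with the \emph{smallest} gap $X_1$ directly, with no Girsanov step and no case analysis over $n'$; the price is that the far force points can push $X_1$ downward, and your new ingredient --- the monotonicity of the span $X_n-X_1$, which confines the whole configuration to an arc of length $\Delta<2\pi$ and hence bounds the adverse drift by a constant whenever $X_1<\eps_0$ --- is exactly what makes this work (it also rules out simultaneous degeneration at both ends, a case the paper's enumeration passes over silently). The trade-off is twofold. First, your comparison diffusion carries the bounded perturbation $-C_1$, so you cannot simply quote the radial Bessel fact and must check Feller's test (divergence of the scale function) including the borderline $\rho_1=\frac\kappa2-2$; you do address this. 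Second, your domination $X_1\ge Y$ is only valid excursion by excursion in $(0,\eps_0)$, so concluding that $\liminf_{t\to T^-}X_1(t)>0$ on $\{T<\infty\}$ when there may be infinitely many excursions accumulating at $T$ needs one more line --- e.g.\ apply the scale function $s$ of the comparison diffusion to $X_1\wedge\eps_0$ and note that the resulting process is a local submartingale bounded above, hence cannot tend to $s(0^+)=-\infty$ in finite time --- whereas the paper's global domination avoids this. The remaining assertions (the Koebe/Schwarz argument for $0$ being a subsequential limit, and the observation that touching $J$ forces $e^{iv_1}$ or $e^{iv_n}$ to be swallowed and hence a gap to degenerate at a finite time) coincide with the paper's.
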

\begin{proof}
   Let $\ha w(t)$ and $\ha v_j(t)$, $1\le j\le n$, $0\le t<T$, be the solutions of the system of SDE used to define this radial SLE$_\kappa(\ulin\rho)$ curve.
 For any $t\in[0,T)$, we have $\ha w(t)>\ha v_1(t)>\cdots\ha v_n(t)>\ha w(t)-2\pi$. If   $T<\infty$, then one of the following events $E^0_{n'},E^{2\pi}_{n'}$, $1\le n'\le n$, must happen:
 $$E^0_{n'}=\{\lim_{t\to T^-} \ha w(t)-\ha v_j(t)=0, 1\le j\le n'\}\cap \{\lim_{t\to T^-} \ha w(t)-\ha v_j(t)\in(0,2\pi),  n'+1\le j\le n\}, $$
 $$E^{2\pi}_k= \{\lim_{t\to T^-} \ha w(t)-\ha v_j(t)=2\pi, n'\le j\le n\}\cap \{\lim_{t\to T^-} \ha w(t)-\ha v_j(t)\in(0,2\pi),  1\le j\le n'-1\}.$$
 To prove that $\PP[T<\infty]=0$, it suffices to show that $\PP[E^0_{n'}]=\PP[E^{2\pi}_{n'}]=0$ for $1\le n'\le n$. By symmetry, we only need to consider $E^0_{n'}$, $1\le n'\le n$. If $\PP[E^0_{n'}]>0$, using Girsanov Theorem, we see that for a radial SLE$_\kappa(\rho_1,\dots,\rho_{n'})$ process in $\D$ from $e^{iw}$ to $0$ with force points $e^{iv_1},\dots,e^{iv_{n'}}$, there is a positive probability that the lifetime $T$ is finite and $\lim_{t\to T^-} \ha w(t)-\ha v_j(t)=0$, $1\le j\le n'$. For this new process, $X_{n'}(t):=\ha w(t)-\ha v_{n'}(t)$ satisfies the SDE:
 $$dX_{n'}(t)=\sqrt\kappa dB(t)+\sum_{j=1}^{n'}\frac{\rho_j}2 \cot_2(\ha w_1(t)-\ha v_j(t))dt+\cot_2(X_k(t))dt.$$
 Since $\cot_2(\ha w_1(t)-\ha v_j(t))> \cot_2(X(t))$ and $\rho_j\ge 0$ for $1\le j\le k-1$, the process $X_{n'}$ stochastically dominates the process $Y$, which satisfies the SDE: $dY(t)=\sqrt \kappa dB(t)+(1+\frac\sigma2)\cot_2(Y(t))dt$, where $\sigma=\sum_{j=1}^{n'} \rho_j\ge \rho_1\ge \frac\kappa 2-2$. It is easy to see that $\frac 12 Y_k(\frac 4\kappa t) $ is a radial Bessel process of dimension $\delta=1+\frac2\kappa(2+\sigma)  \ge 2$, which a.s.\ does not tend to $0$ at any finite time (cf.\ \cite[Appendix A]{Law-Bessel},\cite[Appendix B]{tip}). So the probability that the $X_{n'}(t)$ for the new process tends to $0$ at a finite time is also $0$, which implies that the probability of the $E^0_{n'}$ for the original process is $0$. Thus, a.s.\ $T=\infty$. By Koebe's $1/4$ Theorem, we see that $0$ is a subsequential limit of $\eta$ as $t\to \infty$. If $\eta$ hits the arc $J$, then when it happens, $\eta$ separates $0$ from either $e^{iv_1}$ or $e^{iv_n}$, and the process stops at this time. Since a.s.\ $T=\infty$, such hitting a.s.\ can not happen.
\end{proof}

Now we consider two radial Loewner curves $\eta_1$ and $\eta$, whose driving functions jointly follow $\PP_{c4}$.
From Lemma \ref{transience} (applied to $\kappa\in(0,8)$ and $\rho_1=\rho_2=\rho_3=2$) we know that the   the lifetimes of $\eta_1$ and $\eta_2$ are both a.s.\ $\infty$.
Fix $\tau_2<\infty$. Conditional on $\F^2_{\tau_2}$, $g_2(\tau_2,\eta_1(t_1))$, $t_1\ge 0$, is a radial SLE$_\kappa(2,2,2)$ curve in $\D$ started from $ g_2(\tau_2,e^{i w_1})$ with force points $ g_2(\tau_2,\eta_2(\tau_2))$, $ g_2(\tau_2,e^{iv_1})$ and  $g_2(\tau_2,e^{iv_2})$, up to the lifetime of $\eta_1$ or the first  time that $\eta_1$ hits $\eta_2[0,\tau_2]$. If $\eta_1$ hits $\eta_2[0,\tau_2]$, then it means that $g_2(\tau_2,\eta_1(t_1))$ hits the boundary arc of $\D$ with end points  $ g_2(\tau_2,e^{iv_1})$ and  $g_2(\tau_2,e^{iv_2})$ that contains $g_2(\tau_2,\eta_2(\tau_2))$, which is impossible by Lemma \ref{transience}. Thus, the whole $\eta_1$ does not intersect $\eta_2[0,\tau_2]$. From Lemma \ref{transience} we also know that $\eta_1$  a.s does not intersect the boundary arc of $\D$ with end points $e^{iv_1}$ and $e^{iv_2}$ that contains the initial point of $\eta_2$: $e^{iw_2}$. From the definition of $\cal D$, we have $\PP_{c4}$-a.s.\ $T^{\cal D}_1(\tau_2)=\infty$. Since this holds for any deterministic $\tau_2<\infty$, and the lifetime of $\eta_2$ is a.s.\ $\infty$, we get the following lemma.

\begin{Lemma}
	$\PP_{c4}$-a.s.\ ${\cal D}={\cal Q}=[0,\infty)^2$. \label{T=infty}
\end{Lemma}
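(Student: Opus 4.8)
The plan is to verify directly that, $\PP_{c4}$-a.s., every deterministic pair $(t_1,t_2)\in{\cal Q}$ satisfies the three conditions defining ${\cal D}$: (a) $t_1<S_1$ and $t_2<S_2$; (b) $e^{iv_1},e^{iv_2}\notin\lin{K_1(t_1)}\cup\lin{K_2(t_2)}$; and (c) $\lin{K_1(t_1)}\cap\lin{K_2(t_2)}=\emptyset$. For (a) and (b) I would first note that taking $\tau_k=0$ in the conditional-law statement established in Section~\ref{localization} shows that the $j$-th marginal of $\PP_{c4}$ is $\PP^j_4$, so under $\PP_{c4}$ each $\eta_j$ is a radial SLE$_\kappa(2,2,2)$ curve from $e^{iw_j}$ aimed at $0$ with force points $e^{iv_1},e^{iv_2},e^{iw_{3-j}}$. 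Since $\kappa\in(0,8)$ gives $\frac\kappa2-2<2=\rho_1=\rho_2=\rho_3$, Lemma~\ref{transience} applies (after putting the force points into the decreasing cyclic order it requires) and yields a.s.\ $S_j=\infty$, giving (a), and that $\eta_j$ a.s.\ never hits the closed boundary arc of $\TT$ bounded by $e^{iv_1}$ and $e^{iv_2}$ that contains $e^{iw_{3-j}}$; in particular $\eta_j$ never swallows $e^{iv_1}$ or $e^{iv_2}$, which is (b).

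For (c) I would fix a deterministic $\tau_2\in[0,\infty)$ and use the conditional law of $\eta_1$ given $\F^2_{\tau_2}$ derived in Section~\ref{localization}: under $\PP_{c4}$, conditionally on $\F^2_{\tau_2}$, the curve $g_2(\tau_2,\eta_1(\cdot))$ is a time-changed radial SLE$_\kappa(2,2,2)$ curve started from $g_2(\tau_2,e^{iw_1})$ with force points $g_2(\tau_2,\eta_2(\tau_2))$, $g_2(\tau_2,e^{iv_1})$, $g_2(\tau_2,e^{iv_2})$, run up to the first time it hits $\eta_2[0,\tau_2]$ (or forever). Applying Lemma~\ref{transience} to this image curve shows it a.s.\ does not hit the closed boundary arc bounded by $g_2(\tau_2,e^{iv_1})$ and $g_2(\tau_2,e^{iv_2})$ that contains $g_2(\tau_2,\eta_2(\tau_2))$; but $\eta_1$ hitting $\eta_2[0,\tau_2]$ would force $g_2(\tau_2,\eta_1(\cdot))$ to meet exactly that arc. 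Hence the whole curve $\eta_1$ is $\PP_{c4}$-a.s.\ disjoint from $\eta_2[0,\tau_2]$. A short topological step then upgrades this to the hull statement: $\eta_2[0,\tau_2]$ is connected, starts at $e^{iw_2}$, which by (b) lies in $\TT\sem S_{K_1(t_1)}=\pa(\D\sem K_1(t_1))\cap\TT$, and is disjoint from $\eta_1[0,t_1]$, the relative boundary of $K_1(t_1)$ in $\D$; so $\eta_2[0,\tau_2]$ stays in the component $\D\sem K_1(t_1)$ and $\lin{\eta_2[0,\tau_2]}\cap\lin{K_1(t_1)}=\emptyset$ for every $t_1\ge0$. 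Equivalently, $T^{\cal D}_1(\tau_2)=\infty$ $\PP_{c4}$-a.s.\ for each deterministic $\tau_2$.

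Finally I would pass from ``$(t_1,\tau_2)\in{\cal D}$ for all $t_1$, for each fixed $\tau_2$'' to ``${\cal D}={\cal Q}$ a.s.''\ using that ${\cal D}$ is an HC region: if some $(t_1,t_2)\notin{\cal D}$ then $(q_1,q_2)\notin{\cal D}$ for every $(q_1,q_2)\ge(t_1,t_2)$, since $(q_1,q_2)\in{\cal D}$ would give $[\ulin0,(q_1,q_2)]\subset{\cal D}$ and hence $(t_1,t_2)\in{\cal D}$; therefore $\{{\cal D}\ne{\cal Q}\}=\bigcup_{(q_1,q_2)\in(\Q_{\ge0})^2}\{(q_1,q_2)\notin{\cal D}\}$, a countable union of $\PP_{c4}$-null events by the previous paragraph, hence null. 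I expect the only genuinely non-mechanical points to be (i) correctly tracking the cyclic order of the force points after applying $g_2(\tau_2,\cdot)$ so that the forbidden arc in Lemma~\ref{transience} is precisely the arc $\eta_1$ would need to cross to touch $\eta_2[0,\tau_2]$, and (ii) the topological step turning ``$\eta_1$ does not meet $\eta_2[0,\tau_2]$ and does not swallow $e^{iv_1},e^{iv_2}$'' into the closed-hull conditions in the definition of ${\cal D}$; everything else is a direct invocation of Lemma~\ref{transience} together with the conditional laws established in Section~\ref{localization}.
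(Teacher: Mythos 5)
Your proposal is correct and follows essentially the same route as the paper: the paper also applies Lemma \ref{transience} (with $\rho_1=\rho_2=\rho_3=2$) both to get infinite lifetimes and, via the conditional law of $g_2(\tau_2,\eta_1(\cdot))$ given $\F^2_{\tau_2}$ for deterministic $\tau_2$, to rule out $\eta_1$ meeting $\eta_2[0,\tau_2]$ or the arc between $e^{iv_1}$ and $e^{iv_2}$ containing $e^{iw_2}$, concluding $T^{\cal D}_1(\tau_2)=\infty$ a.s. Your explicit countable-union step over rational times and the topological upgrade to the closed-hull conditions are just elaborations of what the paper leaves implicit.
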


Let $M_{ch\to c4}=\frac{M_{iB\to c4}}{M_{iB\to ch}}$ and $M_{c4\to ch}=M_{ch\to c4}^{-1}$. From Lemma \ref{Lemma-RN-T}   we see that, for any finite $(\F_{ \ulin t})_{\ulin t\in\cal Q}$-stopping time $\ulin T$,
\BGE \frac{d \PP_{2} |\F_{\ulin T}\cap\{\ulin T\in\cal D\}}{d\PP_{c4}|\F_{\ulin T}\cap\{\ulin T\in\cal D\}}=\frac{M_{c4\to ch}(\ulin T)}{M_{c4\to ch}(\ulin 0)}.\label{RN-4to2}\EDE
Let $G(w_1,v_1;w_2,v_2)$ be defined by
$$ G(w_1,v_1;w_2,v_2)= |\sin_2(w_1-v_1)\sin_2(w_2-v_2)|^{\frac8\kappa -1} |\sin_2(w_1-w_2)\sin_2(v_1-v_2)|^{\frac 4\kappa}\times $$ \BGE \times F\Big(\Big|\frac{\sin_2(w_1 -v_2 )\sin_2(v_1-w_2)}{\sin_2(w_1-w_2)\sin_2(v_1-v_2)}\Big|\Big)^{-1}.\label{G(WV)}\EDE
Then with $\alpha_0$ defined by (\ref{alpha0}), we have
\BGE M_{ch\to c4}=e^{\alpha_0\cdot\mA}G(W_1,V_1;W_2,V_2).\label{M2to4}\EDE
From Lemma \ref{T=infty} and (\ref{RN-4to2}) we see that for any finite $(\F_{ \ulin t})_{\ulin t\in\cal Q}$-stopping time $\ulin T$,
\BGE \EE_{2}[{\bf 1}_{\{\ulin T\in\cal D\}}e^{\alpha_0\cdot\mA }G(W_1,V_1;W_2,V_2)|_{\ulin t=\ulin T}] = G(w_1,v_1;w_2,v_2).\label{EchG}\EDE

\section{A Time Curve in the Time Region}\label{section-Time-curve}
In the previous section we have derived many random processes with two time parameters defined on the time region $\cal D$. The goal of this section is to find a parameterization $\ulin u$ which transfers every two-time-parameter process to a one-time-parameter process. The transform then allows the application of It\^o's calculus to those  one-time-parameter processes.

Throughout this section, we suppose $v_1 -v_2 =\pi$.  Let $\theta=V_1-V_2\in(0,2\pi)$. Then $\theta(0,0)=\pi$. We are going to get a continuous and strictly increasing curve $\ulin u:[0,T^u)\to\cal D$ with $\ulin u(0)=\ulin 0$ such that $\theta(\ulin u(t))=\pi$ and $\mA(\ulin u(t))=t$ for any $t\in[0,T^u)$, and the curve can not be further extended with this property. Note that
\BGE \pa_j \theta=W_{j,1}^2( \cot_2(V_1-W_j)-\cot_2(V_2-W_j))\pa t_j=\frac{-W_{j,1}^2\sin_2(\theta)}{\sin_2(W_j-V_1)\sin_2(W_j-V_2)}\pa t_j.\label{pa-j-theta'}\EDE
So $\pa_1\theta<0$ and $\pa_2 \theta>0$. Thus, $\theta(t,0)<\pi$ for $t>0$; and $\theta(0,t)>\pi$ for $t>0$. Let
$$S_1= \{t_1\ge 0: \exists t_2> 0\mbox{ such that }(t_1,t_2)\in{\cal D}\mbox{ and }\theta(t_1,t_2)>\pi\}.$$
Suppose $t_1\in S_1$, and $t_2>0$ is such that $(t_1,t_2)\in\cal D$ and $\theta(t_1,t_2)>\pi$. Then for any $t_1'\in[0,t_1)$, $(t_1',t_2)\in\cal D$ and $\theta(t_1',t_2)>\theta(t_1,t_2)>\pi$, which implies that $t_1'\in S_1$. On the other hand, since $\cal D$ is relatively open in $\R_+^2$, by the continuity of $\theta$, we can find $t_1''>t_1$ such that $(t_1'',t_2)\in\cal D$ and $\theta(t_1'',t_2)>\pi$, which implies that $t_1''\in S_1$. So $S_1=[0,T^u_1)$ for some $T^u_1\in(0,\infty]$. For every $t_1\ge T^u_1$ and any $t_2\ge 0$ such that $(t_1,t_2)\in\cal D$, we must have $\theta(t_1,t_2)<\pi$. For $t_1\in[0,T^u_1)$, applying the intermediate value theorem to $\theta(t_1,\cdot)$ and using the strict  monotonicity of $\theta$ in $t_2$, we conclude that there is a unique $t_2\ge 0$ such that $(t_1,t_2)\in\cal D$ and $\theta(t_1,t_2)=\pi$. Let $u_{1\to 2}$ denote the map $[0,T^u_1)\ni t_1\mapsto t_2$. Since $\theta$ is strictly decreasing in $t_1$ and strictly increasing in $t_2$, $u_{1\to 2}$ is strictly increasing. A symmetric argument shows that there exists $T^u_2\in(0,\infty]$ such that for any $t_2\ge T^u_2$ and any $t_1\ge 0$ such that $(t_1,t_2)\in\cal D$, we have $\theta(t_1,t_2)>\pi$; for any $t_2\in[0,T^u_2)$, there is a unique $t_1\ge 0$ such that $(t_1,t_2)\in\cal D$ and $\theta(t_1,t_2)=\pi$; and the map $u_{2\to 1}:[0,T^u_2)\ni t_2\mapsto t_1$ is strictly increasing. Thus, $u_{1\to 2}$ maps $[0,T^u_1)$ onto $[0,T^u_2)$, and $u_{2\to 1}$ is its inverse. Moreover, both $u_{1\to 2}$ and $u_{2\to 1}$ are continuous. Since $\mA$ is continuous and strictly increasing in both $t_1$ and $t_2$, we see that the map $[0,T^u_1)\ni t_1\mapsto \mA(t_1,u_{1\to 2}(t_1))$ is continuous and strictly increasing. Since $u_{1\to 2}(0)=0$ and $\mA(0,0)=0$, the range of $ \mA(t_1,u_{1\to 2}(t_1))$ is $[0,T^u)$ for some $T^u\in(0,\infty]$. Let $u_1$ denote the inverse of this map, and let $u_2=u_{1\to 2}\circ u_1$. Then for $j=1,2$, $u_j$ is a continuous and strictly increasing function that maps $[0,T^u)$ onto $[0,T^u_j)$; and $\ulin u:=(u_1,u_2):[0,T^u)\to \cal D$ is a strictly increasing curve that satisfies  $\theta(u_1(t),u_2(t))=\pi$ and $\mA(u_1(t),u_2(t))=t$ for any $0\le t<T^u$, and $\lim_{t\to T^-} \ulin u(t)=(T^u_1,T^u_2)$. We see that $(T^u_1,T^u_2)$ does not belong to $\cal D$ because if it does then $\theta(T^u_1,T^u_2)=\pi$,
which contradicts the statement that for every $t_1\ge T^u_1$ and any $t_2\ge 0$ such that $(t_1,t_2)\in\cal D$, we have $\theta(t_1,t_2)<\pi$.
Since $\mA$ is increasing in $t_1$ and $t_2$, we get $u_1(t)=\mA(u_1(t),0)\le \mA(u_1(t),u_2(t))=t$. Similarly, $u_2(t)\le t$. 

For any function $X$ defined on $\cal D$, we define $X^u(t)=X(\ulin u(t))$, $0\le t<T^u$. For example, if $X=\ha w_j$, $j=1,2$, then $\ha w^u_j(t)=\ha w_j(u_j(t))$. Let $Z_j=W_j-V_j>0$, $j=1,2$. Then $Z_2^u\in(0,\pi)$ because $Z_2^u<V_1^u-V_2^u=\pi$, and $Z_1^u\in (0,\pi)$ because $Z_1^u<V_2^u+2\pi-V_1^u=\pi$. We are going to derive SDEs for $Z_j^u$, $j=1,2$, in the following lemma.

\begin{Lemma}
  Under $\PP_{c4}$,  there are two independent standard Brownian motions $B^u_j(t)$, $j=1,2$,  such that for $j=1,2$, $Z_j^u$ satisfies the SDE \BGE dZ_j^u=\sqrt{\frac{\kappa \sin(Z_j^u)}{\sin(Z_1^u)+\sin(Z_2^u)}}dB_j^u+ \frac{4\cos(Z_j^u)}{\sin(Z_1^u)+\sin(Z_2^u)}\,dt,\quad 0\le t<\infty. \label{dZjTu}\EDE
\label{ZSDE}
\end{Lemma}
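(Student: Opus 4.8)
The plan is to first determine the ``speed'' of the curve $\ulin u$ from its two defining constraints $\mA(\ulin u(t))=t$ and $\theta(\ulin u(t))=\pi$, and then transport the $\PP_{c4}$-dynamics of $(W_1,V_1,W_2,V_2)$ along $\ulin u$. Since $v_1-v_2=\pi$ we have $V_1-V_2\equiv\pi$ on $\ulin u$, so with $Z_j^u=W_j-V_j$ one also has $W_1-V_2=Z_1^u+\pi$, $W_1-W_2=Z_1^u-Z_2^u+\pi$ and $V_1-W_2=\pi-Z_2^u$. I would begin by noting that $\mA$ and $\theta=V_1-V_2$ are jointly $C^1$ on $\cal D$ — their partials, read off from (\ref{pamA}) and (\ref{pa-j-theta'}), are continuous in the HC processes $W_{j,1},V_j,W_j$ — and that $\pa_1\theta<0<\pa_2\theta$; hence by the implicit function theorem the level curve $\{\theta=\pi\}$ and its $\mA$-parametrization $\ulin u$ are $C^1$. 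Differentiating the two constraints along $\ulin u$ gives the linear system $W_{1,1}^2u_1'+W_{2,1}^2u_2'=1$ and $(\pa_1\theta)u_1'+(\pa_2\theta)u_2'=0$; substituting the value of $\pa_j\theta$ at $\theta=\pi$ (where $\sin_2(W_1-V_1)\sin_2(W_1-V_2)=\tfrac12\sin(Z_1^u)$ and $\sin_2(W_2-V_1)\sin_2(W_2-V_2)=-\tfrac12\sin(Z_2^u)$) and solving yields $W_{j,1}^2u_j'(t)=\sin(Z_j^u)/(\sin(Z_1^u)+\sin(Z_2^u))$. Finally, $T^u=\infty$ a.s.\ under $\PP_{c4}$: by Lemma \ref{T=infty}, $\cal D=[0,\infty)^2$, so $(T_1^u,T_2^u)\notin\cal D$ forces $T_1^u=\infty$ or $T_2^u=\infty$, and then $u_j(t)\le t$ gives $T^u=\infty$.

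Next I would write the one-direction evolutions of $Z_1=W_1-V_1$. Under $\PP_{c4}$, the $t_1$-direction SDE for $W_1$ (the radial SLE$_\kappa(2,2,2)$ equation of Section \ref{localization}) combined with $\pa_1V_1=W_{1,1}^2\cot_2(V_1-W_1)\pa t_1$ gives $\pa_1Z_1=\sqrt\kappa\,W_{1,1}\,\pa B^4_1+f\,W_{1,1}^2\,\pa t_1$ with $f=\cot_2(W_1-W_2)+2\cot_2(W_1-V_1)+\cot_2(W_1-V_2)$ (the factor $2$ coming from $-\cot_2(V_1-W_1)=\cot_2(W_1-V_1)$), while the finite-variation equations in (\ref{dWkVs}) give $\pa_2Z_1=g\,W_{2,1}^2\,\pa t_2$ with $g=\cot_2(W_1-W_2)-\cot_2(V_1-W_2)$. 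Substituting $\pa t_j=u_j'\,dt$ along $\ulin u$ and using the speed formula, $Z_1^u$ becomes a continuous semimartingale whose drift is $(f\sin(Z_1^u)+g\sin(Z_2^u))/(\sin(Z_1^u)+\sin(Z_2^u))\,dt$ and whose martingale part $N$ has quadratic variation $\kappa(W_{1,1}^u)^2u_1'\,dt=\kappa\sin(Z_1^u)/(\sin(Z_1^u)+\sin(Z_2^u))\,dt$. A trigonometric computation, using $V_1-V_2=\pi$ to write $f=-\tan_2(Z_1^u-Z_2^u)+2\cot_2(Z_1^u)-\tan_2(Z_1^u)$ and $g=-\tan_2(Z_1^u-Z_2^u)-\tan_2(Z_2^u)$ and then product-to-sum identities, reduces $f\sin(Z_1^u)+g\sin(Z_2^u)$ to $4\cos(Z_1^u)$. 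Hence $B_1^u(t):=\int_0^t(\kappa\sin(Z_1^u(s))/(\sin(Z_1^u(s))+\sin(Z_2^u(s))))^{-1/2}\,dN_s$ has quadratic variation $t$, so is a standard Brownian motion, and we obtain (\ref{dZjTu}) for $j=1$; the case $j=2$ is the mirror image under $1\leftrightarrow2$ (with $V_1\leftrightarrow V_2$, i.e.\ $\pi\mapsto-\pi$, which flips signs so as to produce $4\cos(Z_2^u)$).

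It remains to see that $B_1^u$ and $B_2^u$ are independent: the martingale part of $Z_1^u$ is produced entirely by $t_1$-direction increments (driven by $B^4_1$) and that of $Z_2^u$ entirely by $t_2$-direction increments (driven by $B^4_2$), so $\langle B_1^u,B_2^u\rangle\equiv0$; together with $\langle B_j^u\rangle_t=t$, L\'evy's characterization gives that $(B_1^u,B_2^u)$ is a planar Brownian motion, hence the components are independent. The main obstacle is to make the heuristics of the previous paragraph rigorous within the framework of Section \ref{section-two-parameter}: one must justify that along the \emph{random} $C^1$ curve $\ulin u$ the differential of $Z_j$ really is the sum of its $t_1$- and $t_2$-differentials, and that the two local-martingale parts so obtained are orthogonal. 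I would handle this by approximating $\ulin u$ by polygonal paths made of short alternating $t_1$- and $t_2$-steps, applying one-parameter It\^o calculus on each step with the Girsanov-changed Brownian motions $B^4_{j,\tau_k}$ of Section \ref{localization} and the uniform bounds behind Lemma \ref{uniform}, and passing to the limit using the two-parameter optional stopping theorem (Lemma \ref{OST}); joint continuity of all the HC processes involved ensures that no spurious drift or bracket survives the limit.
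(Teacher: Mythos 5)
Your computational core is correct and lands exactly where the paper does: the speed identity $(W_{j,1}^u)^2u_j'=\sin(Z_j^u)/(\sin(Z_1^u)+\sin(Z_2^u))$ is (\ref{uj''}), your $f$ and $g$ are what the conditional SDEs of Section \ref{localization} together with (\ref{dWkVs}) produce (the $W_{j,2}$-drift cancels because $\kappa\bb+\frac\kappa2-3=0$), and the reduction $f\sin(Z_1^u)+g\sin(Z_2^u)=4\cos(Z_1^u)$ checks out. Where you genuinely diverge from the paper is in how the semimartingale decomposition of $Z_j^u$ along the random curve $\ulin u$ is made rigorous. You propose to work directly under $\PP_{c4}$ with the conditional one-direction SDEs and a polygonal alternating-step approximation. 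The paper instead stays under $\PP_{iB}$ as long as possible: it obtains $\langle\ha w_j^u\rangle_t=\kappa u_j(t)$ and $\langle\ha w_1^u,\ha w_2^u\rangle\equiv0$ by applying the two-parameter optional stopping theorem (Lemma \ref{OST}) to the explicit Doob martingales $\ha w_j$, $\ha w_j^2-\kappa t_j$, $\ha w_1\ha w_2$ at the stopping times $\ulin u(t)$ (Lemma \ref{uisst}); it gets that $M^u_{iB\to c4}$ is an $(\F^u_t)$-local martingale the same way (Corollary \ref{Doob}), so that in (\ref{dMito4u}) only the martingale part must be identified and no drift has to survive a limit; and it then passes to $\PP_{c4}$ by a one-parameter Girsanov argument built on $\til w_j^uM^u_{iB\to c4}$, reading off the independent Brownian motions from the brackets already established. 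This delivers exactly what your last paragraph asks for without any polygonal limit. If you pursue your route, note that the real difficulty is not joint continuity but that the conditional Brownian motions $B^4_{j,\tau_k}$ of Section \ref{localization} change with $\tau_k$ from one polygonal step to the next and are only defined up to the localization times $\tau^j_{\xi_j}$; concatenating them into a single pair of independent $(\F^u_t)$-Brownian motions with the correct time changes amounts to re-proving the bracket identities (\ref{variation}), so the optional-stopping route is not merely cleaner but close to unavoidable.
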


From (\ref{pa-j-theta'}) and that $\theta^u\equiv \pi$, we get
$$0=\frac{-2(W_{1,1}^u)^2 }{\sin(Z^u_1(t))}u_1'(t)+\frac{2(W_{2,1}^u)^2 }{\sin(Z^u_2(t))}u_2'(t).$$
From $\mA(u_1(t),u_2(t))=t$ and  (\ref{pamA}) we get
$$1=(W^u_{1,1}(t))^2 u_1'(t)+(W^u_{2,1}(t))^2 u_2'(t).$$
Combining, we get
\BGE (W^u_{j,1})^2 u_j'=\frac{\sin(Z^u_{j})}{\sin(Z^u_1)+\sin(Z^u_2)},\quad j=1,2.\label{uj''}\EDE

So far $u_1$ and $u_2$ are defined on $[0,T^u)$. If $T^u<\infty$, we extend $u_1$ and $u_2$ to $[0,\infty)$ such that for $t\ge T^u$, $u_j(t)=T^u_j$, $j=1,2$. From $u_j(t)\le t$ we get $T^u_j\le T^u<\infty$, $j=1,2$. Thus, the extended $u_1$ and $u_2$ are finite and continuous. Below is a lemma on the extended $\ulin u$.

\begin{Lemma}
  For any $t\in[0,\infty)$, $\ulin u(t)=(u_1(t),u_2(t))$ is an $(\F_{\ulin t})_{\ulin t\in \cal Q}$-stopping time. \label{uisst}
\end{Lemma}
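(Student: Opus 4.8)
The plan is to fix $t\in[0,\infty)$ and an arbitrary deterministic $\ulin s=(s_1,s_2)\in\cal Q$, and to show $\{\ulin u(t)\le\ulin s\}\in\F_{\ulin s}$. First I would note that, since $u_1$ and $u_2$ are continuous and non-decreasing with $u_j(0)=0$, the set $\{r\ge 0:\ulin u(r)\le\ulin s\}$ is a closed interval $[0,\rho]$ for some random $\rho=\rho(\ulin s)\in[0,\infty]$, so that $\{\ulin u(t)\le\ulin s\}=\{t\le\rho(\ulin s)\}$; it then suffices to prove that the $[0,\infty]$-valued variable $\rho(\ulin s)$ is $\F_{\ulin s}$-measurable. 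Throughout I would use that on $\cal D$ the function $\theta=V_1-V_2$ is strictly decreasing in $t_1$ and strictly increasing in $t_2$ (by (\ref{pa-j-theta'})), that $\mA$ is strictly increasing in each variable (by (\ref{pamA})), and that, by the construction of $\ulin u$, the set $\cal D\cap\{\theta=\pi\}$ is exactly the trace $\{\ulin u(r):0\le r<T^u\}$, along which $\mA(\ulin u(r))=r$.

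The heart of the argument is to recover $\rho(\ulin s)$ from data attached to the box $[\ulin 0,\ulin s]$ only. From the facts above, $[\ulin 0,\ulin s]\cap\cal D\cap\{\theta=\pi\}=\{\ulin u(r):0\le r\le\rho(\ulin s),\ r<T^u\}$, so that $\sigma(\ulin s):=\sup\{\mA(\ulin r):\ulin r\in[\ulin 0,\ulin s]\cap\cal D,\ \theta(\ulin r)=\pi\}$ equals $\min(\rho(\ulin s),T^u)$. I would then split into two cases. If $(T^u_1,T^u_2)\not\le\ulin s$, then $\rho(\ulin s)<T^u$ and hence $\rho(\ulin s)=\sigma(\ulin s)$. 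If $(T^u_1,T^u_2)\le\ulin s$, then $T^u<\infty$, $\ulin u$ is eventually constant equal to the boundary point $(T^u_1,T^u_2)\in[\ulin 0,\ulin s]$, and $\rho(\ulin s)=\infty$. Moreover $\{(T^u_1,T^u_2)\le\ulin s\}$ has an intrinsic description inside the box: it is exactly the event that the trace $[\ulin 0,\ulin s]\cap\cal D\cap\{\theta=\pi\}$ contains no point with first coordinate $s_1$ and no point with second coordinate $s_2$, since a trace point on an upper face of the box would have to be the exit point $\ulin u(\rho(\ulin s))$, whereas if the curve terminates inside the box every $\ulin u(r)$ with $r<T^u$ has $j$-th coordinate $u_j(r)<T^u_j\le s_j$. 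Thus $\rho(\ulin s)$ equals $\sigma(\ulin s)$ on $\{(T^u_1,T^u_2)\not\le\ulin s\}$ and equals $\infty$ on its complement. I expect the correct handling of this terminal, constant part of $\ulin u$ (that the curve may leave $\cal D$ in finite capacity) to be the main obstacle.

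It then remains to verify that $\sigma(\ulin s)$ and the event $\{(T^u_1,T^u_2)\le\ulin s\}$ are $\F_{\ulin s}$-measurable. Since $\cal D$ is an $(\F_{\ulin t})_{\ulin t\in\cal Q}$-stopping region and $\theta,\mA$ are continuous $(\F_{\ulin t})_{\ulin t\in\cal Q}$-adapted HC processes, for each $\ulin r\le\ulin s$ the event $\{\ulin r\in\cal D\}$ lies in $\F_{\ulin r}\subset\F_{\ulin s}$, and on it $\theta(\ulin r)$ and $\mA(\ulin r)$ are $\F_{\ulin r}$-measurable. The one subtlety is that $\{\theta=\pi\}$ need not meet $\Q^2$, so the suprema and existential statements above cannot be restricted to rational points directly; I would circumvent this using the monotonicity of $\theta$. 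For fixed rational $q\in[0,s_1]$ the set $\{t_2:(q,t_2)\in\cal D,\ \theta(q,t_2)\ge\pi\}$ is an interval with left endpoint $0$ and right endpoint $u_{1\to 2}(q)$, so by continuity both the event $\{u_{1\to 2}(q)\le s_2\}$ and the number $\sup\{\mA(q,t_2):t_2\in[0,s_2]\cap\Q,\ (q,t_2)\in\cal D,\ \theta(q,t_2)\ge\pi\}$ (equal to $\mA(q,u_{1\to 2}(q))$ when $u_{1\to 2}(q)\le s_2$) are $\F^1_q\vee\F^2_{s_2}$-measurable; taking the supremum over rational $q\le s_1$, legitimate by continuity of $u_{1\to 2}$ and $\mA$, exhibits $\sigma(\ulin s)$ as $\F_{\ulin s}$-measurable. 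The symmetric argument over rational rows $t_2=q$, comparing $\theta(t_1,q)$ with $\pi$, shows that the event ``some $\ulin r\in[\ulin 0,\ulin s]\cap\cal D$ has $\theta(\ulin r)=\pi$ and $r_1=s_1$'' and its analogue with $r_2=s_2$ are $\F_{\ulin s}$-measurable, hence so is $\{(T^u_1,T^u_2)\le\ulin s\}$. Combining, $\rho(\ulin s)$ is $\F_{\ulin s}$-measurable, so $\{\ulin u(t)\le\ulin s\}=\{t\le\rho(\ulin s)\}\in\F_{\ulin s}$, as required.
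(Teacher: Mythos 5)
Your architecture is sound and ultimately rests on the same ingredients as the paper's proof: the strict monotonicity of $\theta$ in each variable, the identification of ${\cal D}\cap\{\theta=\pi\}$ with the trace of $\ulin u$ on $[0,T^u)$, a case analysis according to whether the time curve exits the box $[\ulin 0,\ulin s]$ through one of its upper faces or stays inside, and rational approximation for countability. The paper organizes this more economically: it introduces the events $A_1,A_2$ that the trace meets $\{s_1\}\times[0,s_2)$ resp.\ $[0,s_1)\times\{s_2\}$, and on each of $A_0,A_1,A_2$ writes $\{\ulin u(t)\le \ulin s\}$ directly as a countable Boolean combination, squeezing the exit point between rational points of the single boundary column (or row) $t_1=s_1$ (or $t_2=s_2$). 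Your detour through $\rho(\ulin s)$ and the supremum $\sigma(\ulin s)$ over all rational columns is heavier but workable; your handling of the terminal case $(T^u_1,T^u_2)\le\ulin s$ is correct.

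There is, however, a concrete error in the measurability step: you have the monotonicity of $\theta$ in $t_2$ backwards. By (\ref{pa-j-theta'}), $\pa_2\theta>0$, so for $q<T^u_1$ the set $\{t_2:(q,t_2)\in{\cal D},\ \theta(q,t_2)\ge\pi\}$ is the interval $[u_{1\to 2}(q),T^{\cal D}_2(q))$, not $[0,u_{1\to 2}(q)]$. Consequently the displayed quantity $\sup\{\mA(q,t_2):t_2\in[0,s_2]\cap\Q,\ (q,t_2)\in{\cal D},\ \theta(q,t_2)\ge\pi\}$ is not $\mA(q,u_{1\to 2}(q))$: since $\mA(q,\cdot)$ is increasing, that supremum sits near the top of the column and overshoots the crossing point. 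The repair is to use $\theta\le\pi$ there (so the set becomes $[0,u_{1\to 2}(q)]$ and the supremum is attained exactly at the crossing), and, in the outer supremum over rational $q\le s_1$, to restrict to those columns on which the (correctly characterized) event $\{q<T^u_1,\ u_{1\to 2}(q)\le s_2\}$ holds; otherwise a column with $q\ge T^u_1$, along which $\theta<\pi$ throughout ${\cal D}$, would contribute the spurious value $\sup_{t_2\le s_2}\mA(q,t_2)$ and could make $\sigma(\ulin s)$ strictly larger than $\min(\rho(\ulin s),T^u)$. With these corrections (and the matching ones in the row argument) your proof goes through.
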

\begin{proof}
   Fix $t\ge 0$ and $\ulin s=(s_1,s_2)\in\cal Q$. We need to show that $\{\ulin u(t)\le \ulin s\}\in \F_{\ulin s}$. For this purpose, we consider three events. Let $A_1$ denote the event that the curve $\ulin u\cap \cal D$ intersects $\{s_1\}\times [0,s_2) $; and let $A_2$ denote the event that the curve $\ulin u\cap \cal D$ intersects $  [0,s_1)\times \{s_2\}$. Then $A_1\cap A_2=\emptyset$,
   $$A_1=\bigcup_{t_2\in [0,s_2)\cap\Q} \{(s_1,t_2)\in{\cal D},\theta(s_1,t_2)>\pi \}\in \F_{ \ulin s},$$
   and similarly $A_2\in\F_{ \ulin s}$. Here we used the fact that $\cal D$ is an $(\F_{ \ulin t})_{\ulin t\in\cal Q}$-stopping region and $\theta$ is $(\F_{ \ulin t})_{\ulin t\in\cal Q}$-adapted. Let $A_0=(A_1\cup A_2)^c\in\F_{\ulin s}$.
We have
\begin{align*}
\{\ulin u(t)\le \ulin s\}\cap A_0&=A_0\cap(\{\ulin s\not\in{\cal D}\}\cup\{\ulin s\in {\cal D}, \theta(\ulin s)=\pi,\mA(\ulin s)\ge t\});\\
\{\ulin u(t)\le \ulin s\}\cap A_1&=\bigcap_{n\in\N} \bigcup_{r_2<t_2\in[0,s_2)\cap\Q} \{(s_1,t_2)\in{\cal D}, \theta(s_1,t_2)>\pi>\theta(s_1,r_2),\mA(s_1,r_2)>t-\frac 1n \};\\
\{\ulin u(t)\le \ulin s\}\cap A_2&=\bigcap_{n\in\N} \bigcup_{r_1<t_1\in[0,s_1)\cap\Q} \{(t_1,s_2)\in{\cal D}, \theta(t_1,s_2)<\pi<\theta(r_1,s_2),\mA(r_1,s_2)>t-\frac 1n\}.
\end{align*}
Since the events on the righthand side are all $\F_{\ulin s}$-measurable, so is $\{\ulin u(t)\le \ulin s\}$, as desired.
\end{proof}


We now get a new filtration $(\F^u_t:=\F_{\ulin u(t)})_{t\ge 0}$ by Lemma \ref{T<S} since $\ulin u$ is non-decreasing. For $\ulin \xi=(\xi_1,\xi_2)\in\Xi$,  let $\tau^u_{\ulin\xi}$ denote the first $t\ge 0$ such that $u_1(t)=\tau^1_{\xi_1}$ or $u_2(t)=\tau^2_{\xi_2}$, whichever comes first. 
Note that such time exists and is finite because $[\ulin 0,\tau_{\ulin \xi }]\subset\cal D$.

\begin{Lemma}
 For $\ulin\xi\in\Xi$, 	$\ulin u(\tau^u_{\ulin \xi})$ is an $(\F_{\ulin t})_{\ulin t\in\cal Q}$-stopping time, $\tau^u_{\ulin \xi}$ is an $(\F^u_t)_{t\ge 0}$-stopping time, and for any $t\ge 0$, $\ulin u(t\wedge\tau^u_{\ulin \xi})$ is an $(\F_{\ulin t})_{\ulin t\in\cal Q}$-stopping time.   \label{u-st}
\end{Lemma}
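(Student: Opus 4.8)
The plan is to prove the three assertions in order: first that $\tau^u_{\ulin\xi}$ is an $(\F^u_t)_{t\ge0}$-stopping time, then that $\ulin u(\tau^u_{\ulin\xi})$ is an $(\F_{\ulin t})_{\ulin t\in\cal Q}$-stopping time, and finally to deduce the statement about $\ulin u(t\wedge\tau^u_{\ulin\xi})$ from the first two. For the first assertion I would use that $\ulin u$ is continuous, non-decreasing with $\ulin u(0)=\ulin 0$, so that for every $t\ge0$ one has $\{\tau^u_{\ulin\xi}\le t\}=\{(\tau^1_{\xi_1},0)\le\ulin u(t)\}\cup\{(0,\tau^2_{\xi_2})\le\ulin u(t)\}$: indeed $\tau^u_{\ulin\xi}\le t$ iff $u_1(s)=\tau^1_{\xi_1}$ or $u_2(s)=\tau^2_{\xi_2}$ for some $s\le t$, which by monotonicity and continuity of $\ulin u$ is equivalent to $u_1(t)\ge\tau^1_{\xi_1}$ or $u_2(t)\ge\tau^2_{\xi_2}$. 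Since $(\tau^1_{\xi_1},0)$ and $(0,\tau^2_{\xi_2})$ are (separable) $(\F_{\ulin t})$-stopping times and $\ulin u(t)$ is an $(\F_{\ulin t})$-stopping time by Lemma~\ref{uisst}, Lemma~\ref{T<S}(i) places both events in $\F_{\ulin u(t)}=\F^u_t$, so $\{\tau^u_{\ulin\xi}\le t\}\in\F^u_t$.

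For the second assertion I fix a deterministic $\ulin s\in\cal Q$ and show $\{\ulin u(\tau^u_{\ulin\xi})\le\ulin s\}\in\F_{\ulin s}$. Setting $\rho_{\ulin s}=\sup\{r\ge0:\ulin u(r)\le\ulin s\}$, continuity and monotonicity of $\ulin u$ make $\{r:\ulin u(r)\le\ulin s\}$ a closed interval $[0,\rho_{\ulin s}]$ (with $\rho_{\ulin s}=\infty$ allowed), so $\{\ulin u(\tau^u_{\ulin\xi})\le\ulin s\}=\{\tau^u_{\ulin\xi}\le\rho_{\ulin s}\}$, which I split into $\{\tau^u_{\ulin\xi}<\rho_{\ulin s}\}$ and $\{\tau^u_{\ulin\xi}=\rho_{\ulin s}\}$. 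The strict part expands as $\{\tau^u_{\ulin\xi}<\rho_{\ulin s}\}=\bigcup_{q<q'}\big(\{\tau^u_{\ulin\xi}\le q\}\cap\{\ulin u(q')\le\ulin s\}\big)$ over rationals $0\le q<q'$; in each term $\{\tau^u_{\ulin\xi}\le q\}\in\F^u_q=\F_{\ulin u(q)}\subset\F_{\ulin u(q')}$ by the first assertion and Lemma~\ref{T<S}, while $\{\ulin u(q')\le\ulin s\}\in\F_{\ulin u(q')}$ by Lemma~\ref{T<S}(ii), so the term is $\F_{\ulin u(q')}$-measurable and contained in $\{\ulin u(q')\le\ulin s\}$, whence it lies in $\F_{\ulin s}$ by Lemma~\ref{T<S}(iii); thus $\{\tau^u_{\ulin\xi}<\rho_{\ulin s}\}\in\F_{\ulin s}$. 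For the equality part I would use the explicit description of $\ulin u(\tau^u_{\ulin\xi})$ as the point where $\ulin u$ leaves $[\ulin 0,\tau_{\ulin\xi}]$: since $\theta=V_1-V_2$ is continuous, strictly decreasing in $t_1$ and strictly increasing in $t_2$, and $[\ulin 0,\tau_{\ulin\xi}]\subset\cal D$, the intermediate value theorem gives $\ulin u(\tau^u_{\ulin\xi})=(\tau^1_{\xi_1},u_{1\to2}(\tau^1_{\xi_1}))$ when $\theta(\tau_{\ulin\xi})\ge\pi$ and $\ulin u(\tau^u_{\ulin\xi})=(u_{2\to1}(\tau^2_{\xi_2}),\tau^2_{\xi_2})$ when $\theta(\tau_{\ulin\xi})\le\pi$ (the two agreeing when $\theta(\tau_{\ulin\xi})=\pi$). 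Using that $\theta$ and $\mA$ are $(\F_{\ulin t})$-adapted HC processes, $\cal D$ is an $(\F_{\ulin t})$-stopping region, $\tau_{\ulin\xi}$ is a separable stopping time lying in $\cal D$, and that the relevant $\theta$-values (at $\tau_{\ulin\xi}$, and at points $(\tau^j_{\xi_j},s_j)$ or $(q,s_2),(s_1,q)$) are accordingly measurable (Lemma~\ref{measurable}), I would rewrite $\{\ulin u(\tau^u_{\ulin\xi})\le\ulin s\}$ as a countable union of intersections of events of the shape $\{(t_1,t_2)\in\cal D,\ \theta(t_1,t_2)\gtrless\pi\}$, $\{\tau^j_{\xi_j}\le s_j\}$, $\{\tau^j_{\xi_j}=s_j\}$, $\{(t_1,t_2)\notin\cal D\}$, each of which — with the help of Lemma~\ref{T<S}(iii) to transfer $\F_{\ulin T}$-measurable sets that sit inside $\{\ulin T\le\ulin s\}$ into $\F_{\ulin s}$ — is $\F_{\ulin s}$-measurable. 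This bookkeeping is a direct analogue of the $A_0/A_1/A_2$ decomposition in the proof of Lemma~\ref{uisst}, complicated only by the fact that the obstacle $\tau_{\ulin\xi}$ is now random; I expect it to be the main (though purely technical) difficulty, the delicacy being the careful handling of the distinctions between $\le$ and $<$ and of whether $\ulin s\in\cal D$.

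For the third assertion, note that $\ulin u(t)$ and $\ulin u(\tau^u_{\ulin\xi})$ are comparable in $\cal Q$, both lying on the non-decreasing curve $\ulin u$, so $\ulin u(t\wedge\tau^u_{\ulin\xi})=\ulin u(t)\wedge\ulin u(\tau^u_{\ulin\xi})$ and, for every $\ulin s\in\cal Q$, $\{\ulin u(t\wedge\tau^u_{\ulin\xi})\le\ulin s\}=\{\ulin u(t)\le\ulin s\}\cup\{\ulin u(\tau^u_{\ulin\xi})\le\ulin s\}$. The first event is in $\F_{\ulin s}$ by Lemma~\ref{uisst} and the second by the assertion just established, so $\ulin u(t\wedge\tau^u_{\ulin\xi})$ is an $(\F_{\ulin t})$-stopping time, which completes the argument.
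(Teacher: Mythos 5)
Your route differs from the paper's in a way that is partly an improvement. The paper proves the three claims in the order: first that $\ulin u(\tau^u_{\ulin \xi})$ is an $(\F_{\ulin t})$-stopping time (by an explicit identification of the event $\{\ulin u(\tau^u_{\ulin\xi})\le \ulin t\}$), then that $\tau^u_{\ulin\xi}$ is an $(\F^u_t)$-stopping time via an approximating sequence $\ulin\xi^n\in\Xi$ with $\{\tau^u_{\ulin\xi}\le t\}=\bigcap_n\{\tau^u_{\ulin\xi^n}<t\}$ and a rational decomposition of $\{\ulin u(\tau^u_{\ulin\xi^n})<\ulin u(t)\le \ulin s\}$, and finally the claim about $\ulin u(t\wedge\tau^u_{\ulin\xi})$ by splitting on $\{t<\tau^u_{\ulin\xi}\}$ versus $\{\tau^u_{\ulin\xi}\le t\}$, which requires the approximation argument a second time. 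Your first step, namely $\{\tau^u_{\ulin\xi}\le t\}=\{(\tau^1_{\xi_1},0)\le \ulin u(t)\}\cup\{(0,\tau^2_{\xi_2})\le \ulin u(t)\}$ together with Lemma \ref{uisst} and Lemma \ref{T<S}(i), is correct and bypasses the approximation machinery entirely; your third step, via $\ulin u(t\wedge\tau^u_{\ulin\xi})=\ulin u(t)\wedge \ulin u(\tau^u_{\ulin\xi})$ and the observation that for two \emph{comparable} points of $\cal Q$ the event $\{\ulin a\wedge\ulin b\le\ulin s\}$ is exactly $\{\ulin a\le\ulin s\}\cup\{\ulin b\le\ulin s\}$, is also correct and shorter than the paper's.

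The one place where the proposal is not yet a proof is the ``equality part'' of your second step, which you leave as a promissory note about bookkeeping. In fact the exit-point description you state there already does the whole job and renders the $\rho_{\ulin s}$ decomposition unnecessary. Since either $u_1(\tau^u_{\ulin\xi})=\tau^1_{\xi_1}$ or $u_2(\tau^u_{\ulin\xi})=\tau^2_{\xi_2}$, and the first alternative means the level curve $\{\theta=\pi\}$ exits the box $[\ulin 0,\tau_{\ulin\xi}]$ through its right edge at $(\tau^1_{\xi_1},u_{1\to 2}(\tau^1_{\xi_1}))$, one checks directly that
\begin{equation*}
\{\ulin u(\tau^u_{\ulin\xi})\le \ulin s\}\cap\{u_1(\tau^u_{\ulin\xi})=\tau^1_{\xi_1}\}
=\{\tau^1_{\xi_1}\le s_1\}\cap\{\theta(\tau^1_{\xi_1},\tau^2_{\xi_2}\wedge s_2)\ge\pi\},
\end{equation*}
because $u_{1\to2}(\tau^1_{\xi_1})\le \tau^2_{\xi_2}\wedge s_2$ is equivalent (by monotonicity of $\theta$ in $t_2$) to $\theta(\tau^1_{\xi_1},\tau^2_{\xi_2}\wedge s_2)\ge\pi$. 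The right-hand side lies in $\F_{\ulin s}$: the pair $(\tau^1_{\xi_1},\tau^2_{\xi_2}\wedge s_2)$ is a separable stopping time taking values in $\cal D$, so $\theta$ evaluated there is measurable with respect to its stopped $\sigma$-field by Lemma \ref{measurable}, and on $\{\tau^1_{\xi_1}\le s_1\}$ this stopping time is $\le\ulin s$, so Lemma \ref{T<S}(iii) transfers the event into $\F_{\ulin s}$. Adding the symmetric event for the other exit gives $\{\ulin u(\tau^u_{\ulin\xi})\le\ulin s\}\in\F_{\ulin s}$ in one stroke; this is precisely the paper's argument, and substituting it for your strict/equality split completes your proof.
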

\begin{proof} Let $\ulin\xi\in\Xi$.
	Note that for any $\ulin t=(t_1,t_2)\in\cal Q$, by Lemmas \ref{T<S} and \ref{measurable},
	$$\{\ulin u(\tau^u_{\ulin\xi})\le \ulin t\}\cap \{u_1(\tau^u_{\ulin \xi})
	=\tau^1_{\xi_1}\} = \{\tau^1_{\xi_1}\le t_1\}\cap \{\theta(\tau^1_{\xi_1},\tau^2_{\xi_2}\wedge t_2)\ge \pi\} \in\F_{\ulin t}.$$
	Similarly, $\{\ulin u(\tau^u_{\ulin\xi})\le \ulin t\}\cap \{u_2(\tau^u_{\ulin \xi})=\tau^2_{\xi_1}\}\in \F_{\ulin t}$. Since either $u_1(\tau^u_{\ulin \xi})=\tau^1_{\xi_1}$ or $u_2(\tau^u_{\ulin \xi})=\tau^2_{\xi_2}$, we get $\{\ulin u(\tau^u_{\ulin\xi})\le \ulin t\}\in\F_{\ulin t}$. Thus, $\ulin u(\tau^u_{\ulin \xi})$ is an $(\F_{\ulin t}) $-stopping time.
	
	To prove that $\tau^u_{\ulin\xi}$ is an $(\F^u_t)_{t\ge 0}$-stopping time, it suffices to show that, for any $t\ge 0$ and $\ulin s\in\cal Q$, $\{\tau^u_{\ulin\xi}\le t\}\cap\{\ulin u(t)\le \ulin s\} \in\F_{\ulin s}$.
	We may choose a sequence $\ulin \xi^n=(\xi^n_1,\xi^n_2)_{n\in\N}$ in $\Xi$, which approximates $\ulin \xi$ such that $\{\tau^u_{\ulin\xi}\le t\}=\bigcap_{n=1}^\infty \{\tau^u_{\ulin\xi^n}< t\}$. Then it suffices to show that, for any $n\in\N$,
	$\{\tau^u_{\ulin\xi^n}<t\}\cap\{\ulin u(t)\le \ulin s\}\in\F_{\ulin s}$.
	Since $\ulin u$ is strictly increasing on $[0,\tau^u_{\ulin \xi^n}+\eps)$, 
	$$\{\tau^u_{\ulin\xi^n}<t\}\cap\{\ulin u(t)\le \ulin s\}=\{\ulin u(\tau^u_{\ulin\xi^n})<\ulin u(t)\le \ulin s\} =\bigcup_{\ulin r\in\Q^2\cap[\ulin 0,\ulin s]}\Big(\{\ulin u(\tau^u_{\ulin\xi^n})\le \ulin r\} \cap \{\ulin r<\ulin u(t)\le \ulin s\} \Big). $$
	Since $\ulin u(\tau^u_{\ulin \xi})$ and $\ulin u(t)$ are $(\F_{\ulin t})$-stopping times, the events in the union all belong to $\F_{\ulin s}$. So $\{\tau^u_{\ulin\xi^n}<t\}\cap\{\ulin u(t)\le \ulin s\}\in\F_{\ulin s}$, as desired.
	
	Let $t\ge 0$ and $\ulin s \in\cal Q$. Note that
	$$  \{\ulin u(t\wedge \tau^u_{\ulin\xi})\le \ulin s\} =(\{t<\tau^u_{\ulin\xi}\}\cap\{\ulin u(t)\le \ulin s\})\cup (\{\tau^u_{\ulin\xi}\le t\}\cap \{\ulin u(\tau^u_{\ulin\xi})\le \ulin s\} ).$$ 
 	The first event $\{t<\tau^u_{\ulin\xi}\}\cap\{\ulin u(t)\le \ulin s\}$ belongs to $\F_{\ulin s}$ because  from that $\tau^u_{\ulin\xi}$ is an $(\F^u_t)$-stopping time we know  $\{t<\tau^u_{\ulin\xi}\}\in\F^u_t=\F_{\ulin u(t)}$. The other event $\{\tau^u_{\ulin\xi}\le t\}\cap \{\ulin u(\tau^u_{\ulin\xi})\le \ulin s\}$ equals
	$$ \bigcap_{n\in\N} (\{\tau^u_{\ulin\xi^n}< t\}\cap \{\ulin u(\tau^u_{\ulin\xi^n})<\ulin s\})= \bigcap_{n\in\N} \bigcup_{\ulin r\in\Q^2\cap [\ulin 0,\ulin s)}(\{\ulin r\in{\cal D},\mA(\ulin r)<t\}\cap \{\ulin u(\tau^u_{\ulin\xi^n})\le \ulin r\}),$$
	where we used that $\tau^u_{\ulin\xi^n}=\mA(\ulin u(\tau^u_{\ulin\xi^n}))$. The event on the RHS of the above displayed formula belongs to $\F_{ \ulin s}$ because $\mA$ is $(\F_{ \ulin t})$-adapted and $\ulin u(\tau^u_{\ulin\xi^n})$ is an $(\F_{ \ulin t})$-stopping time.  Thus,  the event  $\{\tau^u_{\ulin\xi}\le t\}\cap \{\ulin u(\tau^u_{\ulin\xi})\le \ulin s\}$ also belongs to $\F_{ \ulin s}$. Then we get $ \{\ulin u(t\wedge \tau^u_{\ulin\xi})\le \ulin s\}\in\F_{ \ulin s}$, as desired.
\end{proof}

Since under $\PP_{iB}$,  for $j=1,2$, $\ha w_j(t_j)=w_j +\sqrt{\kappa}B_j(t_j)$, $t_j\ge 0$, where $(B_1(t_1))$ and $(B_2(t_2))$ are independent standard  Brownian motions, we get five $(\F_{ \ulin t})_{\ulin t\in\cal Q}$-martingales under $\PP_{iB}$: $\ha w_j(t_j)$, $\ha w_j(t_j)^2-\kappa t_j$, $j=1,2$, and $\ha w_1(t_1)\ha w_2(t_2)$.
Using  Lemmas \ref{OST}  and \ref{uisst} and the facts that $u_1(t),u_2(t)\le t$, we conclude that $\ha w_j^u(t )$,  $\ha w_j^u(t )^2-\kappa u_j(t)$, $j=1,2$, and $\ha w^u_1(t )\ha w^u_2(t )$ are all $(\F^u_t)$-martingales under $\PP_{iB}$. So we get quadratic variations and co-variation for $\ha w_j^u$, $j=1,2$:
\BGE \langle \ha w_j^u\rangle_t=\kappa u_j(t) ,\quad j=1,2;\quad \langle \ha w_1^u,\ha w_2^u\rangle_t\equiv 0.\label{variation}\EDE


Fix $\ulin \xi=(\xi_1,\xi_2)\in\Xi$. From Lemmas \ref{Doob} and \ref{OST} we know that $M_{iB\to cs}(u_1(t)\wedge \tau^1_{\xi_1},u_2(t)\wedge \tau^2_{\xi_2})$, $t\ge 0$, $s\in\{4,h\}$, is an $(\F^u_t)_{t\ge 0}$-martingale. Since $\tau^u_{\ulin\xi}$ is an $(\F^u_t)_{t\ge 0}$-stopping time, we see that
$$M_{iB\to cs}(u_1(t\wedge \tau^u_{\ulin\xi})\wedge \tau^1_{\xi_1},u_2(t\wedge \tau^u_{\ulin\xi})\wedge \tau^2_{\xi_2})=M_{iB\to cs}(u_1(t\wedge \tau^u_{\ulin\xi}) ,u_2(t\wedge \tau^u_{\ulin\xi}) )=M_{iB\to cs}^u(t\wedge \tau^u_{\ulin\xi}),\quad t\ge 0,$$
is an $(\F^u_t)_{t\ge 0}$-martingale. Recall that $M_{iB\to cs}^u=M_{iB\to cs}\circ \ulin u$. Since $[0,T^u)=\bigcup_{\xi\in\Xi^*} [0,\tau^u_{\ulin\xi}]$ and $\Xi^*$ is countable, we conclude that  $M_{iB\to cs}^u(t)$, $0\le t<T^u$, is an $(\F^u_t)_{t\ge 0}$-local martingale.

We now compute the SDE for $M_{iB\to c4}^u(t)$, $0\le t<T^u$, in terms of $\ha w^u_1$ and $\ha w^u_2$.
Using (\ref{M-i-4'}) we may express $M^u_{iB\to c4}$ as a product of several factors. Among these factors, $(W_{1,1}^u)^{\bb}$, $(W_{2,1}^u)^{\bb}$, $\sin_2(W_1^u-W_2^u)^{\frac 2\kappa}$, and $\sin_2(W_j^u-V_s^u)^{\frac 2\kappa}$, $j,s\in\{1,2\}$, contribute the martingale part of $M^u_{iB\to c4}$; and other factors are differentiable in $t$. For $j\ne k\in\{1,2\}$, using (\ref{dWkVs},\ref{-3},\ref{1/2-4/3}) we get the $(\F^u_t)$-adapted SDEs:
\BGE dW_j^u = W_{j,1}^ud\ha w^u_j+\Big(\frac \kappa 2-3\Big)W_{j,2}u_j'dt+\cot_2(W_j^u-W_k^u)(W_{k,1}^u)^2u_k'dt,\label{dWju}\EDE
$$\frac{dW_{j,1}^u}{W_{j,1}^u}=\frac{W_{j,2}^u}{W_{j,1}^u}\,d\ha w^u_j+\mbox{drift terms},$$
which imply that, for $s=1,2$,
$$\frac{d\sin_2(W_j^u-V_s^u)^{\frac 2\kappa}}{\sin_2(W_j^u-V_s^u)^{\frac 2\kappa}}=\frac 1\kappa  \cot_2(W_j^u-V_s^u)W_{j,1}^ud\ha w^u_j+\mbox{drift terms},$$
$$\frac{d\sin_2(W_1^u-W_2^u)^{\frac 2\kappa}}{\sin_2(W_1^u-W_2^u)^{\frac 2\kappa}}=\frac 1\kappa \cot_2(W_1^u-W_2^u) [ {W_{1,1}^u}\,d\ha w^u_1- {W_{2,1}^u}\,d\ha w^u_2 ]+\mbox{drift terms},$$
$$\frac{d(W_{j,1}^u)^{\bb}}{(W_{j,1}^u)^{\bb}}=\bb\frac{W_{j,2}^u}{W_{j,1}^u}\,d\ha w^u_j+\mbox{drift terms}.$$
Since we already know that $\ha w^u_1(t)$, $\ha w^u_2(t)$,  $M_{iB\to c4}^u(t)$, $0\le t<T^u$, are $(\F^u_t)_{t\ge 0}$-local martingales, we get
\BGE \frac{d M_{iB\to c4}^u}{M_{iB\to c4}^u}=\sum_{j=1}^2 \Big[\bb\frac{W_{j,2}^u}{W_{j,1}^u} +\sum_{X\in \{W_{3-j},V_1,V_2\}}  \frac 1\kappa \cot_2(W_j^u-X^u) {W_{j,1}^u}   \Big]\,d\ha w^u_j.\label{dMito4u}\EDE
One may also compute (\ref{dMito4u}) directly, and conclude that $M_{iB\to c4}^u(t)$ is an $(\F^u_t)$-local martingale.

From Lemmas \ref{Lemma-RN-T} and \ref{u-st} we know that, for any $\ulin\xi\in\Xi$ and $t\ge 0$, 
\BGE \frac{d \PP_{c4}|\F_{ \ulin u(t\wedge \tau^u_{\ulin\xi})}}{d \PP_{iB}|\F_{ \ulin u(t\wedge \tau^u_{\ulin\xi})}}=\frac{M_{iB\to c4}^u(t\wedge \tau^u_{\ulin\xi} )}{M_{iB\to c4}^u(0)}.\label{RNito4xi}\EDE
We will use  a Girsanov argument to derive the SDEs for $\ha w_j^u$, $j=1,2$, under $\PP_{c4}$.

\begin{Lemma}
Under $\PP_{c4}$,  there are two independent standard Brownian motions $B^u_j(t)$, $j=1,2$,  such that $\ha w_j^u$ satisfies the SDE
$$d\ha w_j^u=\sqrt{\kappa u_j' }dB_j^u +\Big[\kappa \bb\frac{W_{j,2}^u}{W_{j,1}^u} +\sum_{X\in\{W_{3-j},V_1,V_2\}}  \cot_2(W_j^u-X^u) {W_{j,1}^u}   \Big]u_j'dt,\quad 0\le t<\infty.$$
\end{Lemma}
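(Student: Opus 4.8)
The plan is to run a Girsanov change of measure from $\PP_{iB}$ to $\PP_{c4}$, using the Radon--Nikodym relation (\ref{RNito4xi}) and the SDE (\ref{dMito4u}) for the density process, and then to extract the two Brownian motions by a time change governed by $u_1'$ and $u_2'$.

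First I would fix $\ulin\xi=(\xi_1,\xi_2)\in\Xi$ and work on the interval $[0,\tau^u_{\ulin\xi}]$, which is contained in $[0,T^u)$ since $[\ulin 0,\tau_{\ulin\xi}]\subset{\cal D}$. By (\ref{RNito4xi}), $\PP_{c4}$ and $\PP_{iB}$ are mutually absolutely continuous on $\F^u_{t\wedge\tau^u_{\ulin\xi}}$ with density process $M_{iB\to c4}^u(t\wedge\tau^u_{\ulin\xi})/M_{iB\to c4}^u(\ulin 0)$, an $(\F^u_t)$-martingale under $\PP_{iB}$. Under $\PP_{iB}$, (\ref{variation}) gives that $\ha w^u_1,\ha w^u_2$ are continuous $(\F^u_t)$-local martingales with $d\langle\ha w^u_j\rangle_t=\kappa u_j'(t)\,dt$ and $d\langle\ha w^u_1,\ha w^u_2\rangle_t\equiv 0$; moreover $u_j\in C^1$ and by (\ref{uj''}) together with $W^u_{j,1}>0$ one has $u_j'=\frac{\sin(Z^u_j)}{(W^u_{j,1})^2(\sin(Z^u_1)+\sin(Z^u_2))}>0$ on $[0,T^u)$. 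Girsanov's theorem then yields that, under $\PP_{c4}$ on $[0,\tau^u_{\ulin\xi}]$,
$$N_j(t):=\ha w^u_j(t)-\int_0^t\frac{d\langle\ha w^u_j,M_{iB\to c4}^u\rangle_s}{M_{iB\to c4}^u(s)},\qquad j=1,2,$$
are continuous $(\F^u_t)$-local martingales with unchanged covariations $d\langle N_j\rangle_t=\kappa u_j'\,dt$ and $d\langle N_1,N_2\rangle_t\equiv 0$. Computing the bracket from the martingale part of (\ref{dMito4u}) and using $d\langle\ha w^u_1,\ha w^u_2\rangle=0$ to kill the cross term, the compensator equals $\int_0^t\big[\kappa\bb\frac{W^u_{j,2}}{W^u_{j,1}}+\sum_{X\in\{W_{3-j},V_1,V_2\}}\cot_2(W^u_j-X^u)W^u_{j,1}\big]u_j'\,ds$, i.e.\ exactly the drift asserted in the lemma.

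Next I would set $B^u_j(t)=\int_0^t(\kappa u_j'(s))^{-1/2}\,dN_j(s)$, which is well defined because $u_j'>0$ on $[0,T^u)$. Then under $\PP_{c4}$ the pair $(B^u_1,B^u_2)$ is a continuous $(\F^u_t)$-local martingale with $\langle B^u_i,B^u_j\rangle_t=\delta_{ij}t$, so by L\'evy's characterization it is a two-dimensional standard Brownian motion; in particular $B^u_1,B^u_2$ are independent standard Brownian motions, and $d\ha w^u_j=\sqrt{\kappa u_j'}\,dB^u_j+(\text{drift})\,dt$ on $[0,\tau^u_{\ulin\xi}]$. Since $N_j$, and hence $B^u_j$, comes from the $\PP_{c4}$-semimartingale decomposition of $\ha w^u_j$ and is independent of the choice of $\ulin\xi$, letting $\ulin\xi$ range over the countable set $\Xi^*$ and using $[0,T^u)=\bigcup_{\ulin\xi\in\Xi^*}[0,\tau^u_{\ulin\xi}]$ shows the SDE holds on $[0,T^u)$. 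Finally, by Lemma~\ref{T=infty} we have $\PP_{c4}$-a.s.\ ${\cal D}=[0,\infty)^2$; since $\lim_{t\to T^u}\ulin u(t)=(T^u_1,T^u_2)\notin{\cal D}$ while $u_j(t)\le t$, this forces $T^u=\infty$ $\PP_{c4}$-a.s., so the SDE holds for all $t\in[0,\infty)$, as stated.

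The calculation is routine It\^o/Girsanov bookkeeping; the two points that need genuine care are that the density relation (\ref{RNito4xi}) is only available stopped at the times $\tau^u_{\ulin\xi}$, so the change of measure must be performed on each $[0,\tau^u_{\ulin\xi}]$ separately and then patched along the countable family $\Xi^*$, and that the time change extracting $B^u_j$ requires the strict positivity of $u_j'$ on all of $[0,T^u)$ (from (\ref{uj''}) and $W^u_{j,1}>0$) together with the identification $T^u=\infty$ under $\PP_{c4}$ via Lemma~\ref{T=infty}.
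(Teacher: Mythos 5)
Your proposal is correct and follows essentially the same route as the paper: a Girsanov change of measure from $\PP_{iB}$ to $\PP_{c4}$ via the density process $M^u_{iB\to c4}$ stopped at the times $\tau^u_{\ulin\xi}$, computation of the drift from (\ref{dMito4u}) and of the brackets from (\ref{variation}), extraction of the Brownian motions by L\'evy's characterization using $u_j'>0$, and patching over the countable family $\Xi^*$ together with Lemma~\ref{T=infty}. The only difference is cosmetic: you invoke the local-martingale form of Girsanov directly, whereas the paper phrases the same step as showing $\til w^u_j M^u_{iB\to c4}$ is a $\PP_{iB}$-local martingale and then upgrades to a true martingale via a uniform boundedness claim for $\til w^u_j$ on $[0,\tau^u_{\ulin\xi}]$ (needed there to transfer the martingale property through the Radon--Nikodym relation at the two-parameter stopping times $\ulin u(t\wedge\tau^u_{\ulin\xi})$), a technical point your version bypasses.
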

\begin{proof}
For $j=1,2$, define a process $\til w_j^u$, which has initial value $w_j$, and satisfies the SDE
\BGE d\til w_j^u=d\ha w_j^u-\Big[\kappa \bb\frac{W_{j,2}^u}{W_{j,1}^u} +\sum_{X\in\{W_{3-j},V_1,V_2\}}   \cot_2(W_j^u-X^u) {W_{j,1}^u}   \Big]u_j'dt.\label{tilwju}\EDE
From (\ref{dMito4u}) we know that $\til w_j^u(t)M_{iB\to c4}^u(t)$, $0\le t<T^u$, is an $(\F^u_t)$-local martingale under $\PP_{iB}$.

We claim that,  for any $j\in\{1,2\}$ and $\ulin \xi\in\Xi$, $|\til w_j^u|$ is bounded on $[0,\tau^u_{\ulin \xi}]$ by a constant depending only on $\kappa,\ulin\xi,w_1,v_1,w_2,v_2$.
The proof is similar to that of Lemma \ref{uniform}. We may write $\til w_j^u(t)=\ha w_j^u(t)+A_j(t)$ using (\ref{tilwju}). From that proof of Lemma \ref{uniform} we know that $|\log(W^u_{j,1})|$, $|W_{j,2}^u|$, $\cot_2(W^u_j-W^u_{3-j})$, $W^u_j-V^u_s$, $s=1,2$, are all uniformly bounded on $[0,\tau^u_{\ulin \xi}]$. Since $\tau^u_{\ulin\xi}=\mA(\ulin u(\tau^u_{\ulin \xi}))$ and $K(\ulin u(\tau^u_{\ulin \xi}))$ is contained in the $\D$-hull generated by $\xi_1\cup \xi_2$, $\tau^u_{\ulin\xi}$ is also uniformly bounded. From (\ref{uj''}) we know that $u_j'$ is uniformly bounded on $[0,\tau^u_{\ulin \xi}]$. The above argument shows that $|A_j|$ is uniformly bounded on $[0,\tau^u_{\ulin \xi}]$. In order to prove the  uniform boundedness of $| \ha w_j^u|$  on $[0,\tau^u_{\ulin \xi}]$, it suffices to show that $|\ha w_j|$ is uniformly bounded on $[0,\tau^j_{\xi_j}]$.  For $\ha w_2(t)$, we have
\BGE \til g_2(t_2,v_1)>\ha w_2(t_2)>\til g_2(t_2,v_2),\quad 0\le t_2\le \tau^2_{\xi_2}. \label{<<}\EDE Since $\pa_{t_2} \til g_2(t_2,v_s)= \cot_2(\til g_2(t_2,v_s)-\ha w_2(t_2))$, by the uniform boundedness of  $|\cot_2(\til g_2(t_2,v_s)-\ha w_2(t_2))|=|\cot_2(V_s(0,t_2)-W_2(0,t_2))|$ and $t_2$ on $[0,\tau^2_{\xi_2}]$, we see that $|\til g_2(t_2,v_s)|$ is uniformly bounded on $ [0,\tau^2_{\xi_2}]$ for $s=1,2$. Using (\ref{<<}) we get the uniform boundedness of $\ha w_2(t_2)$ on $[0,\tau^j_{\xi_j}]$. The argument for $\ha w_1(t_1)$ is similar except that  we use $\til g_1(t_1,v_2+2\pi)>\ha w_1(t)>\til g_1(t_1,v_1)$. So the claim is proved.

From Lemma \ref{uniform} and the above claim, we see that, for any $j\in\{1,2\}$ and $\ulin \xi\in\Xi$, $\til w_j^u(t\wedge \tau^u_{\ulin \xi})M_{iB\to c4}^u(t\wedge \tau^u_{\ulin \xi})$, $t\ge 0$, is an $(\F^u_t)$-martingale under $\PP_{iB}$. Since this process is $(\F_{ \ulin u(t\wedge \tau^u_{\ulin \xi})})$-adapted, and $\F_{ \ulin u(t\wedge \tau^u_{\ulin \xi})}\subset \F_{ \ulin u(t)}=\F^u_t$, we see that it is an $(\F_{ \ulin u(t\wedge \tau^u_{\ulin \xi})})$-martingale. From (\ref{RNito4xi}) we see that $\til w_j^u(t\wedge \tau^u_{\ulin \xi})$, $t\ge 0$, is an $(\F_{ \ulin u(t\wedge \tau^u_{\ulin \xi})})$-martingale under $\PP_{c4}$. We now show that $(\til w_j^u(t\wedge \tau^u_{\ulin \xi}))$ is an $(\F^u_t)$-martingale under $\PP_{c4}$. To check this, we need to show that for any $t\ge s\ge 0$ and $A\in \F^u_s$,
\BGE \EE_{c4}[{\bf 1}_A \til w_j^u(t\wedge \tau^u_{\ulin \xi})]=\EE_{c4}[{\bf 1}_A \til w_j^u(s\wedge \tau^u_{\ulin \xi})].\label{E4=}\EDE
Write $A=A_1\cup A_2$, where $A_1=A\cap \{\tau^u_{\ulin \xi}< s\}$ and $A_2=A\cap \{\tau^u_{\ulin \xi}\ge s \}$. Since $t\wedge \tau^u_{\ulin \xi}=s\wedge \tau^u_{\ulin \xi}$ on $A_1$, (\ref{E4=}) holds with $A_1$ in place of $A$. From Lemma \ref{T<S}, $A_2=A\cap \{\ulin u(s)\le \ulin u(s\wedge \tau^u_{\ulin \xi}) \}\in \F_{ \ulin u(s\wedge \tau^u_{\ulin \xi})}$. So (\ref{E4=}) also holds with $A_2$ in place of $A$.
Combining, we get (\ref{E4=}), as desired. Thus, $\til w_j^u(t\wedge \tau^u_{\ulin \xi})$, $t\ge 0$, is an $(\F^u_t)$-martingale under $\PP_{c4}$. From Lemma \ref{T=infty} we know that $\PP_{c4}$-a.s.\ $T^u=\infty$.  Since $T^u=\sup_{\xi\in\Xi^*} \tau^u_{\ulin\xi}$,  we see that $\til w_j^u(t)$,  $0\le t<\infty$, is an  $(\F^u_t)$-local martingale  under $\PP_{c4}$.

From (\ref{variation}) we know that, under $\PP_{iB}$,
\BGE \langle \til w_j^u(\cdot\wedge \tau^u_{\ulin \xi} )\rangle_t=\kappa u_j(t\wedge \tau^u_{\ulin \xi}) ,\quad j=1,2;\quad \langle \til w_1^u(\cdot\wedge \tau^u_{\ulin \xi}),\til w_2^u(\cdot\wedge \tau^u_{\ulin \xi} )\rangle_t\equiv 0\label{varitaion4}\EDE
Since $\PP_{c4}\ll \PP_{iB}$ on $\F_{ \ulin u(t\wedge \tau^u_{\ulin \xi})}$ for any $t\ge 0$, we also have (\ref{varitaion4}) under $\PP_{c4}$. Since $T^u=\sup_{\xi\in\Xi^*} \tau^u_{\ulin\xi}$, we conclude that, under $\PP_{c4}$, \BGE \langle \til w_j^u \rangle_t=\kappa u_j(t ) ,\quad j=1,2;\quad \langle \til w_1^u ,\til w_2^u \rangle_t\equiv 0,\quad 0\le t<T^u=\infty.\label{varitaion4'}\EDE
Since $(\til w^u_j)$, $j=1,2$, are  $(\F^u_t)$-local martingales  under $\PP_{c4}$, we see that there are two independent standard Brownian motions $B^u_j(t)$, $j=1,2$, under $\PP_{c4}$, such that
$d\til w_j^u(t)=\sqrt{\kappa u_j'(t)}dB^u_j(t)$, $0\le t<\infty$. Using (\ref{tilwju}) we then complete the proof.
\end{proof}

\begin{proof}[Proof of Lemma \ref{ZSDE}]
Recall that $Z_j=W_j-V_j$, $j=1,2$. Since $W_1>V_1>W_2>V_2>W_1-2\pi$, and $\theta^u=V_1^u-V_2^u=\pi$, we have $Z_j^u\in(0,\pi)$, $j=1,2$. Let $k=3-j$.
Using (\ref{dWkVs}) we get
$$dV_j^u=-\cot_2(W_j^u-V_j^u)(W_{j,1}^u)^2u_j'dt-\cot_2(W_k^u-V_j^u)(W_{k,1}^u)^2u_k'dt.$$
Combining this formula with  (\ref{uj''},\ref{dWju}),
and that $V_j^u-V_{k}^u=\pm\pi$, we get (\ref{dZjTu}).
\end{proof}

\section{Transition Density} \label{section-transition-density}
In this section, we are going to find out the transition density of the process $(Z^u_1,Z^u_2)$ that satisfies (\ref{dZjTu}).
Define $B^u_+(t)$ and $B^u_-(t)$ such that
$$B^u_\pm(t)=\int_0^t \sqrt{\frac{ \sin(Z_1^u(s))}{\sin(Z_1^u(s))+\sin(Z_2^u(s))}}\,dB_1^u(s)\pm \int_0^t \sqrt{\frac{  \sin(Z_2^u(s))}{\sin(Z_1^u(s))+\sin(Z_2^u(s))}}\,dB_2^u(s).$$
Then both $B^u_+(t)$ and $B^u_-(t)$ are standard $(\F^u_t)$-Brownian motions, and their quadratic covariation satisfies
\BGE d\langle B^u_+,B^u_-\rangle_t=\cot_2({Z_1^u+Z_2^u})\tan_2( {Z_1^u- Z_2^u})dt.  \label{cov-Bpm}\EDE

Let $Z_\pm ^u=(Z_1^u\pm Z_2^u)/2$. Then $Z_+ ^u\in(0,\pi)$, $Z_-^u\in(-\frac\pi 2,\frac\pi 2)$, and they satisfy the SDEs
$$ dZ_+^u=\frac{\sqrt{\kappa}}2dB_+ ^u+2\cot(Z_+^u)dt,\quad 0\le t<\infty.$$
$$ dZ_-^u=\frac{\sqrt{\kappa}}2dB_- ^u-2\tan(Z_-^u)dt,\quad 0\le t<\infty.$$


We are going to follow the argument in \cite[Appendix B]{tip} to derive the transition density of  $(Z_+^u,Z_-^u)$. 
Let $X=\cos(Z_+^u) $ and $Y=\sin(Z_-^u)$. Then $X,Y\in(-1,1)$, and satisfy the SDEs
\BGE dX =-\frac{\sqrt{\kappa}}2\sqrt{1-X ^2}dB_+ ^u-\Big(2+\frac \kappa 8\Big)X  dt.\label{dX}\EDE
\BGE dY =+\frac{\sqrt{\kappa}}2\sqrt{1-Y ^2}dB_- ^u-\Big(2+\frac \kappa 8\Big)Y  dt.\label{dY}\EDE
From (\ref{cov-Bpm}) we have
\BGE d\langle X ,Y\rangle_t= -\frac \kappa 4XYdt.\label{cov-XY}\EDE
Since $X(t)^2+Y(t)^2=1-\sin(Z^u_1(t))\sin(Z^u_2(t))<1$, we see that $(X(t),Y(t))\in\D$ for all $t\ge 0$. We will find out the transition density $p_t((x,y),(x^*,y^*))$ for the joint process $(X,Y)$.

First, we assume that the transition density $p_t((x,y),(x^*,y^*))$ for $(X,Y)$ exists, and make some observations. For any fixed $({x^*},{y^*})\in\D$ and $t_0>0$, the process $M^{({x^*},{y^*})}_t:=p(t_0-t,(X(t),Y(t)),({x^*},{y^*}))$, $0\le t<t_0$, is a martingale. Assuming further that $p$ is smooth in $(t,x,y)$, then we get a PDE:
\BGE-\pa_t p+{\cal L}p=0,\label{PDE'}\EDE
where ${\cal L}$ is the second order differential operator:
$${\cal L}:=\frac\kappa 8(1-x^2)\pa_x^2 +\frac\kappa 8(1-y^2)\pa_y^2 -\frac \kappa 4 xy\pa_x\pa_y -(2+\frac \kappa 8)x\pa_x -(2+\frac \kappa 8)y\pa_y .$$
We will derive the eigenvectors and eigenvalues of ${\cal L}$. Note that for integers $n,m\ge 0$,
$$\L(x^ny^m)=-\frac\kappa 8(n+m)(n+m+\frac{16}\kappa)x^ny^m+\frac\kappa 8n(n-1)x^{n-2}y^m+\frac\kappa 8m(m-1)x^{n}y^{m-2}.$$
Define
\BGE \lambda_n=-\frac\kappa 8 n(n+\frac{16}\kappa),\quad n\in\N\cup\{0\}.\label{lambdan}\EDE Then $\L(x^ny^m)$ equals to $\lambda_{n+m} x^ny^m$ plus a polynomial in $x,y$ of degree less than $n+m$. Hence, for each $n,m\in\N\cup\{0\}$, there is a polynomial $P_{(n,m)}(x,y)$ of degree $n+m$, which can be written as $x^ny^m$ plus a polynomial of degree less than $n+m$, such that
$$\L P_{(n,m)}=\lambda_{n+m} P_{(n,m)}.$$
Let $\Psi(x,y)=(1-x^2-y^2)^{\frac8\kappa-1}$, and define the inner product $$\langle f,g\rangle_\Psi:=\int\!\int_{\D} f(x,y)g(x,y)\Psi(x,y)dxdy.$$
Since $\Psi\equiv 0$ on $\TT$, using integration by parts, we can show that for smooth functions $f$ and $g$ on $\lin{\D}$, $\langle \L f,g\rangle_\Psi=\langle f,\L g\rangle_\Psi$. In fact, if we let $a_{xx}=\frac\kappa 8(1-x^2)$, $a_{yy}=\frac \kappa 8(1-y^2)$, $a_{xy}=a_{yx}=-\frac \kappa 8xy$, $b_x=-(2+\frac\kappa 8)x$, and $b_y=-(2+\frac\kappa 8)y$, then both $\langle \L f,g\rangle_\Psi$ and $\langle f,\L g\rangle_\Psi$ equal
$$-\int\!\int_{\D} [(\pa_x f) a_{xx}\Psi (\pa_x g)+(\pa_x f)a_{xy}\Psi(\pa_y g)+(\pa_y f) a_{yx}\Psi (\pa_x g) +(\pa_y f) a_{yy}\Psi (\pa_y g) ]dxdy.$$
Here we use $\pa_x(a_{xx}\Psi)+\pa_y(a_{xy}\Psi)=b_x\Psi$ and $\pa_y(a_{yx}\Psi)+\pa_y(a_{yy}\Psi)=b_y\Psi$.

Thus, $P_{(n,m)}$ is orthogonal to $P_{(n',m')}$ w.r.t.\ $\langle\cdot\rangle_\Psi$ if $n+m\ne n'+m'$; and we may construct a sequence of polynomials $v_{(n,s)} $, $n=0,1,2,\dots$, $s=0,1,\dots,n$, such that $v_{(n,s)}$ is a polynomial in $x,y$ of degree $n$, $\L v_{(n,s)}=\lambda_n v_{(n,s)}$, and $\{v_{(n,s)}\}$ form an orthonormal basis w.r.t.\ $\langle \cdot\rangle_\Psi$. Here every $v_{(n,s)}$ is a linear combination of $P_{(j,k)}$ over $j,k\in\N\cup\{0\}$ such that $j+k=n$.
On the other hand, if a sequence of polynomials $v_{(n,s)} $, $n=0,1,2,\dots$, $s=0,1,\dots,n$,   form an orthonormal basis w.r.t.\ $\langle \cdot\rangle_\Psi$, and each $v_{(n,s)}$ has degree $n$, then $\L v_{(n,s)}=\lambda_n v_{(n,s)}$. This is because $v_{(n,s)}$ is orthogonal to all polynomials of degree less than $n$, and so it must be a linear combination of $P_{(j,k)}$ over $j,k\in\N\cup\{0\}$ such that $j+k=n$. From \cite[Section 1.2.2]{orthogonal-2}, we may choose $v_{(n,s)}$ such that for each $n\ge 0$, $v_{(n,0)},v_{(n,1)},\dots,v_{(n,n)}$ are given by
$$v_{n,j,1}=h_{n,j,1}P_j^{(\frac8\kappa-1,n-2j)}(2r^2-1)r^{n-2j}\cos((n-2j)\theta),\quad 0\le 2j\le n,$$
$$v_{n,j,2}=h_{n,j,2}P_j^{(\frac8\kappa-1,n-2j)}(2r^2-1)r^{n-2j}\sin((n-2j)\theta),\quad 0\le 2j\le n-1,$$
where $P_j^{(\frac8\kappa-1,n-2j)}$ are Jacobi polynomials of index $(\frac8\kappa-1,n-2j)$, $(r,\theta)$ is the polar coordinate of $(x,y)$: $x=r\cos\theta$ and $y=r\sin\theta$, and $h_{n,j,i}>0$ are normalization constants. Using the polar integration and Formula (\cite[Table 18.3.1]{NIST:DLMF})
\BGE \int_{-1}^1 P_j^{(\alpha,\beta)}(x)^2(1-x)^\alpha(1+x)^\beta dx=\frac{2^{\alpha+\beta+1}\Gamma(j+\alpha+1)\Gamma(j+\beta+1)} {j!(2j+\alpha+\beta+1)\Gamma(j+\alpha+\beta+1)}\label{L2-norm}\EDE
with $\alpha=\frac 8\kappa-1$ and $\beta=n-2j$, we compute
\BGE h_{n,j}:=h_{n,j,1}=h_{n,j,2}=\sqrt{\frac {1+{\bf 1}_{n\ne 2j}}\pi\cdot\frac{j!(n+\frac 8\kappa)\Gamma(n-j+\frac 8\kappa)}{\Gamma(j+\frac 8\kappa)\Gamma(n-j+1)}}.\label{h}\EDE
Using the super norm (over $[-1,1]$) of $P_j^{(\alpha,\beta)}$ (\cite[18.14.1,18.14.2]{NIST:DLMF}):
\BGE \|P_j^{(\alpha,\beta)}\|_\infty=\frac{\Gamma(\max\{\alpha,\beta\}+j+1)}{j!\Gamma(\max\{\alpha,\beta\}+1)},\quad\mbox{if } \max\{\alpha,\beta\}\ge -1/2\mbox{ and }\min\{\alpha,\beta\}> -1,\label{supernorm}\EDE
we get
\BGE \|v_{n,j,1}\|_\infty =\|v_{n,j,2}\|_\infty=h_{n,j}\max\Big\{\frac{\Gamma(\frac 8\kappa+j)}{j!\Gamma(\frac 8\kappa)},\frac{\Gamma(n-j+1)}{j!\Gamma(n-2j+1)}\Big\}.\label{v}\EDE

For $t>0$, $(x,y),(x^*,y^*)\in\D$, we define
\BGE p_t((x,y),(x^*,y^*))=\sum_{n=0}^\infty\sum_{s=0}^n \Psi(x^*,y^*)v_{(n,s)}(x,y)v_{(n,s)}(x^*,y^*)e^{\lambda_nt}. \label{pt}\EDE
Let $p_\infty(x^*,y^*)$ be the  term for $n=s=0$. Since $\lambda_0=0$ and $P^{\alpha,\beta}_0\equiv 1$, we have
\BGE p_\infty(x^*,y^*)=\frac {8}{\pi\kappa} \Psi(x^*,y^*)=\frac {8}{\pi\kappa} (1-(x^*)^2-(y^*)^2)^{\frac8\kappa-1}.\label{p-infty}\EDE

\begin{Lemma}
  For any $t_0>0$, the series in (\ref{pt}) converges uniformly on $[t_0,\infty)\times \lin\D\times \lin\D$, and there is $C_{t_0}\in(0,\infty)$ depending only on $\kappa$ and $t_0$ such that
$$ |p_t((x,y),(x^*,y^*))-p_\infty(x^*,y^*)|\le  C_{t_0} e^{-(2+\frac\kappa 8)t}p_\infty(x^*,y^*),\quad t\ge t_0,\quad (x,y),(x^*,y^*)\in\lin\D.$$ 
Moreover, for any $t>0$ and $(x^*,y^*)\in \lin\D$,
\BGE p_\infty(x^*,y^*)=\int\!\int_{\D} p_\infty(x,y)p_t((x,y),(x^*,y^*))dxdy.\label{invar}\EDE \label{asym-lemma}
\end{Lemma}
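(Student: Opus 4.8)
The plan is to establish uniform convergence by the Weierstrass $M$-test, using the explicit formulas (\ref{h}) and (\ref{v}) for the normalizing constants and the sup-norms of the basis polynomials, then to read off the decay rate from the fact that $\lambda_n$ is strictly decreasing in $n$ with $\lambda_1=-(2+\frac\kappa 8)$; the invariance identity (\ref{invar}) will then fall out of orthonormality once the interchange of sum and integral is justified by the already-proved uniform convergence.

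\textbf{Step 1: polynomial-times-exponential bounds on $\|v_{(n,s)}\|_\infty$.} Each $v_{(n,s)}$ is a polynomial, hence bounded on the compact set $\lin\D$. I would bound $\|v_{n,j,i}\|_\infty$ through (\ref{v}): the Jacobi sup-norm factor is $\binom{n-j}{j}\le 2^n$ when $\max\{\frac8\kappa-1,n-2j\}=n-2j$, and is $\frac{\Gamma(\frac8\kappa+j)}{j!\,\Gamma(\frac8\kappa)}=O(n^{8/\kappa})$ when the max equals $\frac8\kappa-1$; and $h_{n,j}$ from (\ref{h}) is controlled by Stirling, using $\kappa<8$ (so $\frac8\kappa-1>0$) to see that $\Gamma(n-j+\frac8\kappa)/\Gamma(n-j+1)$ and $j!/\Gamma(j+\frac8\kappa)$ are bounded by polynomials in $n$ uniformly in $j\in\{0,\dots,\lfloor n/2\rfloor\}$. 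Combining gives constants $C,p$ depending only on $\kappa$ with $\|v_{(n,s)}\|_\infty^2\le C\,4^n n^{p}$ for all $n$ and all $s\in\{0,\dots,n\}$.

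\textbf{Step 2: uniform convergence and the decay estimate.} Since $\lambda_n=-\frac\kappa8 n(n+\frac{16}\kappa)$ is decreasing in $n$ with $\lambda_1=-(2+\frac\kappa 8)$, for $t\ge t_0$ one has $e^{\lambda_n t}\le e^{\lambda_n t_0}$, so $\sum_n\sum_s\|v_{(n,s)}\|_\infty^2 e^{\lambda_n t_0}\le\sum_n(n+1)\,C\,4^n n^{p}e^{-\frac{\kappa t_0}{8}n^2}<\infty$; together with $\Psi(x^*,y^*)\le1$ on $\lin\D$ (again $\kappa<8$), the $M$-test gives uniform convergence on $[t_0,\infty)\times\lin\D\times\lin\D$. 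For the inequality, every term with $n\ge1$ still carries the factor $\Psi(x^*,y^*)$, so $|p_t-p_\infty|\le\Psi(x^*,y^*)\sum_{n\ge1}\sum_s\|v_{(n,s)}\|_\infty^2 e^{\lambda_n t}$; factoring out $e^{\lambda_1 t}=e^{-(2+\frac\kappa8)t}$ and using $(\lambda_n-\lambda_1)t\le(\lambda_n-\lambda_1)t_0$ for $n\ge1,\ t\ge t_0$ bounds this by $\Psi(x^*,y^*)\,e^{-(2+\frac\kappa8)t}\,S$, where $S:=\sum_{n\ge1}\sum_s\|v_{(n,s)}\|_\infty^2 e^{(\lambda_n-\lambda_1)t_0}<\infty$. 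Since $\Psi(x^*,y^*)=\frac{\pi\kappa}{8}p_\infty(x^*,y^*)$ by (\ref{p-infty}), this is exactly the asserted bound with $C_{t_0}:=\frac{\pi\kappa}{8}S$.

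\textbf{Step 3: the invariance identity.} Insert $p_\infty(x,y)=\frac{8}{\pi\kappa}\Psi(x,y)$ and the series (\ref{pt}) into the right side of (\ref{invar}). As $\Psi$ is bounded on $\D$, which has finite area, and the series converges uniformly in $(x,y)\in\lin\D$ by Step 2, I may swap summation and integration to get $\frac{8}{\pi\kappa}\sum_{n,s}\Psi(x^*,y^*)v_{(n,s)}(x^*,y^*)e^{\lambda_n t}\langle v_{(n,s)},1\rangle_\Psi$. Writing the constant function as $1=h_{0,0}^{-1}v_{(0,0)}$ with $h_{0,0}=\sqrt{8/(\pi\kappa)}$ (from (\ref{h}) at $n=j=0$) and invoking the orthonormality $\langle v_{(n,s)},v_{(0,0)}\rangle_\Psi=\delta_{n,0}\delta_{s,0}$, only the $n=s=0$ term survives, equal to $\frac{8}{\pi\kappa}\Psi(x^*,y^*)\,v_{(0,0)}(x^*,y^*)h_{0,0}^{-1}=\frac{8}{\pi\kappa}\Psi(x^*,y^*)=p_\infty(x^*,y^*)$.

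\textbf{Main obstacle.} The only genuinely technical point is Step 1: producing clean, $\kappa$-explicit bounds on $h_{n,j}$ and $\|P_j^{(\frac8\kappa-1,\,n-2j)}\|_\infty$ that are uniform in $j\in\{0,\dots,\lfloor n/2\rfloor\}$, starting from the Gamma-function formulas. This is pure Stirling bookkeeping, and the hypothesis $\kappa\in(0,8)$ is precisely what makes $\Psi$ bounded on $\lin\D$ and the relevant Gamma ratios polynomially bounded. Once any bound of the form $\|v_{(n,s)}\|_\infty^2\le C^n\,\mathrm{poly}(n)$ is available, the super-exponential decay $e^{\lambda_n t_0}$ (of order $e^{-cn^2}$) dominates with room to spare, and everything in Steps 2 and 3 is immediate.
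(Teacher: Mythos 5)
Your proposal is correct and follows essentially the same route as the paper's (deliberately terse) proof: the paper likewise deduces uniform convergence and the $e^{\lambda_1 t}$ decay from Stirling's formula together with (\ref{h}), (\ref{v}) and the facts $\lambda_1>\lambda_n$ for $n>1$ and $\lambda_n\asymp-\frac\kappa8 n^2$, and obtains (\ref{invar}) from orthogonality plus uniform convergence. You have simply carried out the Gamma-ratio and Jacobi sup-norm bookkeeping that the paper leaves implicit, and your details check out.
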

\begin{proof}
  The uniform convergence of the series in (\ref{pt}) and the first formula follows from Stirling's formula, (\ref{h},\ref{v}), and the facts that $\lambda_1= -(2+\frac\kappa 8)>\lambda_n$ for any $n>1$ and $\lambda_n\asymp -\frac\kappa 8n^2$ for big $n$.   Formula (\ref{invar}) follows from the orthogonality of $v_{n,s}$ w.r.t.\ $\langle\cdot,\cdot\rangle_\Psi$ and the uniform convergence of the series in (\ref{pt}).
\end{proof}

\begin{Lemma} Under $\PP_{c4}$,
$p_t((x,y),(x^*,y^*))$ is the transition density for $(X(t),Y(t))$ that satisfies (\ref{dX},\ref{dY},\ref{cov-XY}), and $p_\infty$ is the invariant density.
\end{Lemma}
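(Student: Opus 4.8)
The plan is to run, rigorously and in reverse, the heuristic that produced (\ref{PDE'}): one checks that the explicit kernel $p_t$ of (\ref{pt}) solves the backward Kolmogorov equation for the diffusion $(X,Y)$, and then reads off that it is a genuine probability density realizing the law of $(X(t),Y(t))$, with $p_\infty$ invariant. First I would record the standing facts that make the statement meaningful: under $\PP_{c4}$ one has $T^u=\infty$ by Lemma \ref{T=infty} and $\cal D={\cal Q}$, so $(Z_1^u,Z_2^u)$, which solves the autonomous system (\ref{dZjTu}), and hence $(X,Y)=(\cos Z_+^u,\sin Z_-^u)$, are defined for all $t\ge 0$; since the ordering $W_1>V_1>W_2>V_2>W_1-2\pi$ persists, $Z_j^u\in(0,\pi)$ for all $t$, so $X^2+Y^2=1-\sin(Z_1^u)\sin(Z_2^u)\in[0,1)$ and the process never leaves the open disc $\D$. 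By well-posedness of the martingale problem for (\ref{dX},\ref{dY},\ref{cov-XY}) (smooth coefficients on $\D$, process confined to $\D$), the law of $(X,Y)$ depends only on the starting point $(x,y)=(X(0),Y(0))\in\D$; write $\EE^{(x,y)}$ for the associated expectation.

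Next I would show that $p_t((x,y),(x^*,y^*))$, as a function of $(t,x,y)$, satisfies $-\pa_t p+{\cal L}^{(x,y)}p=0$ on $(0,\infty)\times\D$. Lemma \ref{asym-lemma} already furnishes uniform convergence of (\ref{pt}) on $[t_0,\infty)\times\lin\D\times\lin\D$; the same Stirling estimates applied to (\ref{h},\ref{v}) and to the analogous sup-norm bounds for $\pa_x v_{(n,s)},\pa_y v_{(n,s)}$ and their second derivatives -- all growing only polynomially in $n$, against the decay of $e^{\lambda_n t}=e^{-\frac\kappa 8 n(n+\frac{16}\kappa)t}$ -- show that (\ref{pt}) may be differentiated term by term in $t,x,y$ for $t\ge t_0$. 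Since ${\cal L}v_{(n,s)}=\lambda_n v_{(n,s)}$, the PDE follows termwise. Now fix a polynomial $\phi$ on $\lin\D$ and set $u(t,x,y)=\int\!\int_{\D}p_t((x,y),(x^*,y^*))\phi(x^*,y^*)\,dx^*dy^*$. Interchanging sum and integral (justified by the uniform convergence in $(x^*,y^*)$) and using orthonormality of $\{v_{(n,s)}\}$ in $\langle\cdot,\cdot\rangle_\Psi$, one gets $u(t,x,y)=\sum_{n,s}\langle\phi,v_{(n,s)}\rangle_\Psi\,v_{(n,s)}(x,y)e^{\lambda_n t}$, a \emph{finite} sum since $\langle\phi,v_{(n,s)}\rangle_\Psi=0$ for $n>\deg\phi$. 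Thus $u$ is smooth on $[0,\infty)\times\lin\D$, polynomial and bounded in $(x,y)$ there, with $u(0,\cdot)=\phi$ and $-\pa_t u+{\cal L}u=0$.

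Then, fixing $t>0$ and $(x,y)\in\D$, I would apply It\^o's formula to $N_s:=u(t-s,X(s),Y(s))$, $0\le s\le t$, using (\ref{dX},\ref{dY},\ref{cov-XY}): the drift of $N_s$ equals $(-\pa_t u+{\cal L}u)(t-s,X(s),Y(s))=0$, and since $u$ is bounded on $[0,t]\times\lin\D$ with $(X(s),Y(s))\in\lin\D$, $N_s$ is a bounded martingale. Hence $\EE^{(x,y)}[\phi(X(t),Y(t))]=\EE[N_t]=N_0=u(t,x,y)=\int\!\int_{\D} p_t((x,y),(x^*,y^*))\phi(x^*,y^*)\,dx^*dy^*$. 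Because polynomials are dense in $C(\lin\D)$ and $|u(t,x,y)|\le\sup_{\lin\D}|\phi|$, this identity passes to all $\phi\in C(\lin\D)$, so the measure with density $p_t((x,y),\cdot)$ is the law of $(X(t),Y(t))$ started from $(x,y)$: taking $\phi\ge0$ gives $p_t((x,y),\cdot)\ge0$ a.e., $\phi\equiv1$ gives total mass $1$, and since two finite Borel measures agreeing on $C(\lin\D)$ coincide, $p_t$ is the transition density. Finally $p_\infty\ge0$ and $\int\!\int_{\D} p_\infty=h_{0,0}^2\int\!\int_{\D}\Psi=\langle v_{(0,0)},v_{(0,0)}\rangle_\Psi=1$ by (\ref{p-infty},\ref{h}), while (\ref{invar}) of Lemma \ref{asym-lemma} reads $\int\!\int_{\D} p_\infty(x,y)\,p_t((x,y),(x^*,y^*))\,dxdy=p_\infty(x^*,y^*)$, i.e.\ $p_\infty\,dx\,dy$ is preserved by the transition semigroup; hence $p_\infty$ is the invariant density.

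The only genuinely technical point is the term-by-term differentiation of (\ref{pt}) -- a routine Stirling estimate for Jacobi polynomials and their first two derivatives against $e^{\lambda_n t}$; everything else is the standard verification-of-Kolmogorov-equation scheme together with Stone--Weierstrass. The one input that is geometric rather than analytic, namely that $(X,Y)$ stays off the circle $\TT$ so that the smooth function $u$ on $\lin\D$ may legitimately be fed into It\^o's formula, is already supplied by $X^2+Y^2=1-\sin(Z_1^u)\sin(Z_2^u)<1$ together with $T^u=\infty$ $\PP_{c4}$-a.s.
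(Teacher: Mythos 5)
Your proposal is correct and follows essentially the same route as the paper: expand a polynomial test function in the orthonormal eigenbasis $\{v_{(n,s)}\}$ so that the resulting finite sum solves the backward equation $-\pa_t u+{\cal L}u=0$, apply It\^o's formula to obtain a bounded martingale, and pass to all of $C(\lin\D,\R)$ by Stone--Weierstrass, with invariance of $p_\infty$ read off from (\ref{invar}). The extra material you include (term-by-term differentiation of the full series, confinement of $(X,Y)$ to $\D$, normalization of $p_\infty$) is sound but not needed beyond what the finite-sum argument already provides.
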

\begin{proof}
Fix $(x,y)\in\D$. Let $(X(t),Y(t))$, $t\ge 0$, be the process that satisfies (\ref{dX},\ref{dY},\ref{cov-XY}) with initial value $(x,y)$. Fix $t_0>0$. For the first statement, it suffices to show that, for any $f\in C(\lin\D,\R)$,
\BGE \EE_{c4}[f(X_{t_0},Y_{t_0})]=\int\!\int_{\D} p_{t_0}((x,y),(x^*,y^*))f(x^*,y^*)dx^*dy^*.\label{Efp}\EDE

Since  $\L v_{(n,s)}=\lambda_n v_{(n,s)}$, every function $v_{(n,s)}(x,y) e^{\lambda_nt}$ solves (\ref{PDE'}). Let $f$ be a polynomial in $x,y$. Let $a_{(n,s)}=\langle f,v_{(n,s)}\rangle_\Psi$. Then $f(x,y)=\sum_{n=0}^\infty\sum_{s=0}^n a_{(n,s)}v_{(n,s)}(x,y)$, where all but finitely many $a_{(n,s)}$ are not zero. Define $$f(t,(x,y))=\sum_{n=0}^\infty\sum_{s=0}^n  a_{(n,s)} v_{(n,s)}(x,y)e^{\lambda_nt}= \int\!\int_{\D} p_{t_0}((x,y),(x^*,y^*))f(x^*,y^*)dx^*dy^*.$$
Then $f(t,(x,y))$ solves (\ref{PDE'}) since it is a linear combination of $v_{(n,s)}(x,y) e^{\lambda_nt}$. Let $(X(t),Y(t))$ be a stochastic process in $\D$, which solves (\ref{dX},\ref{dY},\ref{cov-XY}) with initial value $(x,y)$.  Fix $t_0>0$ and define $M_t=f(t_0-t,(X(t),Y(t)))$, $0\le t\le t_0$. By It\^o's formula, $(M_t)$ is a bounded martingale w.r.t.\ $\PP_4$, which implies that $\EE_{c4}[f(X({t_0}),Y(t_0))]=\EE_{c4}[M_{t_0}]=M_0=f(t_0,(x,y))$. So we get (\ref{Efp}) for a polynomial $f$. Formula (\ref{Efp}) for a general $f\in C(\lin\D,\R)$ follows from Stone-Weierstrass theorem. The statement on $p_\infty$ follows immediately from (\ref{invar}).
\end{proof}

\begin{Corollary}
	Under $\PP_{c4}$, the transition density for $(Z^u_1,Z^u_2)$ that satisfies (\ref{dZjTu}) is $$ p^Z_t((z_1,z_2),(z_1^*,z_2^*)) =p_t((\cos_2(z_1+z_2),\sin_2(z_1-z_2)),(\cos_2(z_1^*+z_2^*),\sin_2(z_1^*-z_2^*)))\frac{\sin z_1^* +\sin z_2^*}{4},$$
and the invariant density is
$$p^Z_\infty ( z_1^*,z_2^*)= p_\infty( \cos_2(z_1^*+z_2^*),\sin_2(z_1^*-z_2^*))\frac{\sin z_1^* +\sin z_2^*}{4}.$$
\end{Corollary}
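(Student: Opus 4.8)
The plan is to read off the $Z$-densities from the already-established $(X,Y)$-densities by a deterministic change of variables. By the definitions preceding (\ref{dX}), $X=\cos(Z_+^u)=\cos_2(Z_1^u+Z_2^u)$ and $Y=\sin(Z_-^u)=\sin_2(Z_1^u-Z_2^u)$, so that $(X(t),Y(t))=\Phi(Z_1^u(t),Z_2^u(t))$ for every $t$, where $\Phi(z_1,z_2):=(\cos_2(z_1+z_2),\sin_2(z_1-z_2))$. The first step is to show that $\Phi$ is a $C^\infty$-diffeomorphism from $(0,\pi)^2$ onto $\D$. I would factor $\Phi$ through the linear map $(z_1,z_2)\mapsto(z_+,z_-):=(\frac{z_1+z_2}{2},\frac{z_1-z_2}{2})$, which sends $(0,\pi)^2$ onto the open diamond $\{(z_+,z_-):0<z_+\pm z_-<\pi\}$, followed by $(z_+,z_-)\mapsto(\cos z_+,\sin z_-)$; on the diamond one has $|z_-|<\min\{z_+,\pi-z_+\}$, hence $|\sin z_-|<\sin z_+=\sqrt{1-\cos^2 z_+}$, so the image is exactly $\{x^2+y^2<1\}=\D$, the map is injective, and its Jacobian is nonvanishing. (This incidentally re-proves $X^2+Y^2=1-\sin(Z_1^u)\sin(Z_2^u)<1$, already noted in the text.)

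The second step is the Jacobian. A direct computation of the $2\times2$ determinant of $D\Phi$ gives $|\det D\Phi(z_1,z_2)|=\frac{1}{2}\sin_2(z_1+z_2)\cos_2(z_1-z_2)$, and the product-to-sum identity $\sin_2(z_1+z_2)\cos_2(z_1-z_2)=\frac{1}{2}(\sin z_1+\sin z_2)$ turns this into $\frac{1}{4}(\sin z_1+\sin z_2)$, precisely the factor in the statement.

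The rest is formal. Under $\PP_{c4}$, the previous lemma gives that $(X,Y)$ started from $\Phi(z_1,z_2)$ has transition density $p_t(\Phi(z_1,z_2),\cdot)$; since $(Z_1^u,Z_2^u)=\Phi^{-1}(X,Y)$ and $\Phi$ is a diffeomorphism, the Markov property and the existence of a transition density for $(Z_1^u,Z_2^u)$ are inherited from $(X,Y)$, and conditioning $(Z_1^u,Z_2^u)$ to start at $(z_1,z_2)$ is the same as conditioning $(X,Y)$ to start at $\Phi(z_1,z_2)$. The pushforward-of-density formula then shows that $(Z_1^u(t),Z_2^u(t))$ has density $(z_1^*,z_2^*)\mapsto p_t(\Phi(z_1,z_2),\Phi(z_1^*,z_2^*))\,|\det D\Phi(z_1^*,z_2^*)|$ on $(0,\pi)^2$; substituting the Jacobian gives the claimed $p^Z_t$. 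Pushing the invariant measure $p_\infty(x,y)\,dx\,dy$ forward by $\Phi^{-1}$, equivalently letting $t\to\infty$ and invoking Lemma \ref{asym-lemma}, yields $p^Z_\infty$. The only point requiring genuine care, rather than bookkeeping, is the surjectivity (and injectivity) of $\Phi$ onto all of $\D$, which is what makes $\Phi^{-1}$ globally defined so that the change of variables is valid on the whole domain; everything else is routine.
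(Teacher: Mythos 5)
Your proposal is correct and follows essentially the same route as the paper, which simply observes that $X=\cos_2(Z^u_1+Z^u_2)$ and $Y=\sin_2(Z^u_1-Z^u_2)$ satisfy (\ref{dX},\ref{dY},\ref{cov-XY}) and then changes variables; your Jacobian $\frac14(\sin z_1+\sin z_2)$ and the verification that $\Phi$ is a diffeomorphism of $(0,\pi)^2$ onto $\D$ are exactly the details the paper leaves implicit.
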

\begin{proof}
This follows from the above lemma and the fact that $X(t):=\cos_2(Z^u_1(t)+Z^u_2(t))$ and $Y(t):=\sin_2(Z^u_1(t)-Z^u_2(t))$ satisfy (\ref{dX},\ref{dY},\ref{cov-XY}).
\end{proof}

Next, we will
derive the transition density $\til p^Z_t((z_1,z_2),(z_1^*,z_2^*))$ under $\PP_{2}$ for $(Z^u_1,Z^u_2)$. Now $B^u_1$ and $B^u_2$ are not standard Brownian motions under $\PP_{2}$, and we no longer have $\PP_{2}$-a.s.\ $T^u=\infty$. In fact, we will see that $\PP_{2}$-a.s.\ $T^u<\infty$. By saying that $\til p^Z_t((z_1,z_2),(z_1^*,z_2^*))$ is the transition density for $(Z^u_1,Z^u_2)$ under $\PP_{2}$, we mean that, for any $t>0$ and $(z_1,z_2)\in(0,\pi)^2$,  if $(Z^u_1,Z^u_2)$ starts from $(z_1,z_2)$, then for any bounded measurable function $f$ on $(0,\pi)^2$, we have
$$\EE_{2}[{\bf 1}_{\{T^u>t\}}f(Z^u_1(t),Z^u_2(t))]=\int_0^\pi\!\int_0^\pi \til p^Z_t((z_1,z_2),(z_1^*,z_2^*))f(z_1^*,z_2^*)dz_2^*dz_1^*.$$
In particular, we have $\PP_{2}[T^u>t]=\int_0^\pi\!\int_0^\pi \til p^Z_t((z_1,z_2),(z_1^*,z_2^*)) dz_2^*dz_1^*$.

From (\ref{RN-4to2}) we know that, for any $t\ge 0$,
$$\frac{d\PP_{2}|\F^u_t\cap\{T^u>t\}}{d\PP_{c4}|\F^u_t\cap\{T^u>t\}}=\frac{M_{c4\to ch}^u(t)}{M_{c4\to ch}^u(0)} .$$
Let $G^u(z_1,z_2)$ be a function defined for $z_1,z_2\in(0,\pi)$ such that
$$G^u(z_1,z_2)=[\sin_2 z_1\sin_2 z_2]^{\frac 8\kappa-1}\cos_2(z_1-z_2)^{\frac 4\kappa}F\Big(\frac{\cos_2 z_1\cos_2 z_2}{\cos_2(z_1-z_2)}\Big)^{-1}.$$
From (\ref{G(WV)},\ref{M2to4}) we get $G(W^u_1,V^u_1;W^u_2,V^u_2)=G^u(Z^u_1,Z^u_2)$ and
$$M^u_{c4\to ch}(t)=e^{-\alpha_0t} G^u(Z^u_1(t),Z^u_2(t))^{-1}.$$
Recall that $\PP_{c4}$-a.s.\ $T^u=\infty$. So we obtain the following lemma.

\begin{Lemma}
	Under $\PP_{2}$, $(Z^u_1(t),Z^u_2(t))$, $0\le t<T^u$, is  a Markov process with transition density
$$\til p^Z_t((z_1,z_2),(z_1^*,z_2^*)):=  e^{-\alpha_0t} p^Z_t((z_1,z_2),(z_1^*,z_2^*)) \frac{G^u(z_1,z_2)}{G^u(z_1^*,z_2^*)}.$$
\label{Lemma-til-p}
\end{Lemma}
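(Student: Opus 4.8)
The plan is to obtain $\til p^Z_t$ by a Doob-type change of measure from $\PP_{c4}$ to $\PP_{2}$, exploiting that the transition density of $(Z^u_1,Z^u_2)$ under $\PP_{c4}$ is already known (the Corollary above) and that $\PP_{c4}$-a.s.\ $T^u=\infty$ (Lemma \ref{T=infty}). Two facts are already assembled just above the statement: first, for every $t\ge 0$,
$$\frac{d\PP_{2}|_{\F^u_t\cap\{T^u>t\}}}{d\PP_{c4}|_{\F^u_t\cap\{T^u>t\}}}=\frac{M_{c4\to ch}^u(t)}{M_{c4\to ch}^u(0)};$$
second, $M^u_{c4\to ch}(t)=e^{-\alpha_0 t}G^u(Z^u_1(t),Z^u_2(t))^{-1}$, hence, when $(Z^u_1,Z^u_2)$ starts from $(z_1,z_2)$, $M^u_{c4\to ch}(0)=G^u(z_1,z_2)^{-1}$. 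I would also record that $G^u$ is smooth and strictly positive on $(0,\pi)^2$ --- on that set $\sin_2 z_j>0$, $\cos_2(z_1-z_2)>0$, and the argument of $F$ stays in $[0,1]$, where $F>0$ --- so $1/G^u$ is well defined along the path, since $Z^u_j(t)\in(0,\pi)$.

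The core computation is the transition-density identity for a fixed starting point. Fix a bounded measurable $f$ on $(0,\pi)^2$ and a Borel set $E\subseteq(0,\pi)^2$. The event $\{Z^u(t)\in E\}\cap\{T^u>t\}$ lies in $\F^u_t$ and is contained in $\{T^u>t\}$: here $\{T^u>t\}=\bigcup_{\ulin\xi\in\Xi^*}\{\tau^u_{\ulin\xi}>t\}\in\F^u_t$ because each $\tau^u_{\ulin\xi}$ is an $(\F^u_t)$-stopping time (Lemma \ref{u-st}), and $\mathbf{1}_{\{T^u>t\}}Z^u(t)$ is $\F^u_t$-measurable. Inserting this event into the Radon--Nikodym identity, substituting the two formulas for $M^u_{c4\to ch}$, and replacing $\mathbf{1}_{\{T^u>t\}}$ by $1$ under $\PP_{c4}$ (Lemma \ref{T=infty}) gives
$$\PP_{2}\big[\{T^u>t\}\cap\{Z^u(t)\in E\}\big]=e^{-\alpha_0 t}\,G^u(z_1,z_2)\,\EE_{c4}\Big[\mathbf{1}_{\{Z^u(t)\in E\}}\,\frac{1}{G^u(Z^u(t))}\Big].$$
Expanding the right-hand expectation with the $\PP_{c4}$-transition density $p^Z_t$ turns the right side into $\int\!\int_E e^{-\alpha_0 t}\,p^Z_t((z_1,z_2),(z_1^*,z_2^*))\,G^u(z_1,z_2)/G^u(z_1^*,z_2^*)\,dz_1^*dz_2^*=\int\!\int_E\til p^Z_t((z_1,z_2),(z_1^*,z_2^*))\,dz_1^*dz_2^*$, and finiteness of this integral is automatic since the left side is at most $1$.

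For the Markov property I would run the same change of measure at two times. Fix $0\le s\le t$, a bounded measurable $f$, and $A\in\F^u_s$; since $\{T^u>t\}\subseteq\{T^u>s\}$ I may assume $A\subseteq\{T^u>s\}$. I would convert $\EE_{2}[\mathbf{1}_{\{T^u>t\}}\mathbf{1}_A f(Z^u(t))]$ to a $\PP_{c4}$-expectation via the Radon--Nikodym identity at time $t$ (dropping $\mathbf{1}_{\{T^u>t\}}$), condition on $\F^u_s$ and use that $(Z^u_1,Z^u_2)$ is a time-homogeneous Markov process under $\PP_{c4}$ with transition density $p^Z$ --- valid since (\ref{dZjTu}) is autonomous --- and then convert back to $\PP_{2}$ via the Radon--Nikodym identity at time $s$; the factors $e^{-\alpha_0 s}$ and $G^u(Z^u(s))^{\pm 1}$ then recombine, yielding
$$\EE_{2}\big[\mathbf{1}_{\{T^u>t\}}f(Z^u(t))\mid\F^u_s\big]=\mathbf{1}_{\{T^u>s\}}\int_0^\pi\!\int_0^\pi\til p^Z_{t-s}(Z^u(s),(z_1^*,z_2^*))\,f(z_1^*,z_2^*)\,dz_2^*dz_1^*.$$
The main obstacle is purely bookkeeping: at each use of the Radon--Nikodym identity one must keep the survival event $\{T^u>t\}$ correctly attached and track which events sit in which of $\F^u_s,\F^u_t$; once that is organized, the remainder is algebra on the explicit $G^u$, $p^Z$, and the exponential factor.
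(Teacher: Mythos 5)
Your proposal is correct and follows the same route as the paper: the paper derives the lemma directly from the Radon--Nikodym identity (\ref{RN-4to2}) restricted to $\F^u_t\cap\{T^u>t\}$, the formula $M^u_{c4\to ch}(t)=e^{-\alpha_0 t}G^u(Z^u_1(t),Z^u_2(t))^{-1}$, the fact that $\PP_{c4}$-a.s.\ $T^u=\infty$, and the known $\PP_{c4}$-transition density $p^Z_t$. Your write-up merely makes explicit the bookkeeping (measurability of the survival event, the two-time change of measure for the Markov property) that the paper leaves implicit.
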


Note that for some explicit constant $C\in(0,\infty)$ depending on $\kappa$,
$$\frac{p^Z_\infty(z_1,z_2)} {G^u(z_1,z_2)}=C[ {\cos_2 z_1\cos_2 z_2} ]^{\frac8\kappa -1}    \sin_2(z_1+z_2)  {\cos_2(z_1-z_2)}  ^{1-\frac 4\kappa}   {F \Big(\frac{\cos_2 z_1\cos_2 z_2}{\cos_2(z_1-z_2)} \Big)}.$$
So $\frac{p^Z_\infty(z_1,z_2)} {G^u(z_1,z_2)} $ extends to a  continuous function on $[0,\pi]^2$, which vanishes at the corners. We may normalize it to get a probability density, i.e., we define
\BGE {\cal Z}=\int_0^\pi\!\int_0^\pi \frac{p^Z_\infty(z_1,z_2)} {G^u(z_1,z_2)} dz_1dz_2\in(0,\infty),\label{calZ}\EDE
\BGE \til p^Z_\infty(z_1,z_2)=\frac 1{{\cal Z}}\frac{p^Z_\infty(z_1,z_2)} {G^u(z_1,z_2)},\quad z_1,z_2\in[0,\pi].\label{tilpZinfty}\EDE

From now on, if a quantity $Q$ depends on $t\in(0,\infty)$ and other variables $\ulin x$, and $f$ is a positive function on $(0,\infty)$, we write $Q$ as $O(f(t))$, if for any $t_0>0$ there is $C_{t_0}\in(0,\infty)$ depending only on $\kappa$ and $t_0$ such that for any $t\ge t_0$ and any $\ulin x$, $|Q(t,\ulin x)|\le Cf(t)$.

\begin{Lemma}
\begin{enumerate}
  \item [(i)] For any $t>0$ and $z_1^*,z_2^*\in[0,\pi]$, \BGE \int_0^\pi\!\int_0^\pi  \til p^Z_\infty(z_1,z_2)\til p^Z_t((z_1,z_2),(z_1^*,z_2^*))dz_1dz_2=\til p^Z_\infty(z_1^*,z_2^*)e^{-\alpha_0t}.
	\label{invariant-psudo}\EDE
This means, under the law $\PP_{2}$, if the process $(Z^u_1,Z^u_2)$ starts from a random point $(z_1,z_2)\in (0,\pi)^2$ with density $\til p^Z_\infty$, then for any deterministic $t\ge 0$, the density of $(Z^u_1(t),Z^u_2(t))$ (that is survived)  at time $t$ is $e^{-\alpha_0 t} \til p^Z_\infty$. 
\item [(ii)]	For any $(z_1,z_2)\in(0,\pi)^2$ and a process $(Z^u_1,Z^u_2)$ started from $(z_1,z_2)$, we have
	\BGE\PP_{2}[T^u>t]={\cal Z} G^u(z_1,z_2) e^{-\alpha_0t}(1+O(e^{-(2+\frac\kappa 8)t}));
	\label{P[T>t]}\EDE
	\BGE \til p^Z_t((z_1,z_2),(z_1^*,z_2^*))=\PP_{2}[T^u>t]\til p^Z_\infty(z_1^*,z_2^*)(1+O(e^{-(2+\frac\kappa 8)t})).\label{tilptT>t}\EDE \label{property-til-p}
Here we emphasize that the implicit constants in the $O$ symbols do not depend on $(z_1,z_2)$.
\end{enumerate}
\end{Lemma}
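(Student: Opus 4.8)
The plan is to deduce both parts from Lemma \ref{Lemma-til-p} together with the asymptotics of the transition density $p^Z_t$ that follow from Lemma \ref{asym-lemma}, by a straightforward (but bookkeeping-heavy) integration argument. First I would record the key input: by Lemma \ref{asym-lemma} and the Corollary computing $p^Z_t$ from $p_t$, we have $p^Z_t((z_1,z_2),(z_1^*,z_2^*)) = p^Z_\infty(z_1^*,z_2^*)(1 + O(e^{-(2+\frac\kappa8)t}))$ uniformly in all four angular variables, where the Jacobian factor $\frac{\sin z_1^*+\sin z_2^*}4$ is the same on both sides and cancels inside the error. Then by Lemma \ref{Lemma-til-p}, $\til p^Z_t((z_1,z_2),(z_1^*,z_2^*)) = e^{-\alpha_0 t} p^Z_\infty(z_1^*,z_2^*)\frac{G^u(z_1,z_2)}{G^u(z_1^*,z_2^*)}(1+O(e^{-(2+\frac\kappa8)t})) = e^{-\alpha_0 t}\,{\cal Z}\,G^u(z_1,z_2)\,\til p^Z_\infty(z_1^*,z_2^*)(1+O(e^{-(2+\frac\kappa8)t}))$, using the definitions (\ref{calZ}) and (\ref{tilpZinfty}) to rewrite $p^Z_\infty(z_1^*,z_2^*)/G^u(z_1^*,z_2^*) = {\cal Z}\,\til p^Z_\infty(z_1^*,z_2^*)$. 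This single identity is the backbone of everything that follows.

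For part (i), I would compute $\int\!\int \til p^Z_\infty(z_1,z_2)\til p^Z_t((z_1,z_2),(z_1^*,z_2^*))\,dz_1\,dz_2$ directly from Lemma \ref{Lemma-til-p}: the factor $G^u(z_1^*,z_2^*)^{-1}$ pulls out, $e^{-\alpha_0 t}$ pulls out, and what remains is $e^{-\alpha_0 t}G^u(z_1^*,z_2^*)^{-1}\int\!\int \til p^Z_\infty(z_1,z_2) p^Z_t((z_1,z_2),(z_1^*,z_2^*))G^u(z_1,z_2)\,dz_1\,dz_2$. Now $\til p^Z_\infty(z_1,z_2)G^u(z_1,z_2) = \frac1{\cal Z}p^Z_\infty(z_1,z_2)$ by (\ref{tilpZinfty}), so after converting back to the $(X,Y)$-coordinates via the Jacobian (turning $p^Z_\infty$ into $p_\infty$ and $p^Z_t$ into $p_t$), the inner integral becomes exactly the left side of the invariance relation (\ref{invar}) in Lemma \ref{asym-lemma}, which equals $p_\infty(x^*,y^*)$, i.e. (after converting back) $p^Z_\infty(z_1^*,z_2^*)/\frac{\cal Z}{\cal Z}$—more precisely $\frac1{\cal Z}p^Z_\infty(z_1^*,z_2^*)$. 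Dividing by $G^u(z_1^*,z_2^*)$ and using (\ref{tilpZinfty}) once more gives $e^{-\alpha_0 t}\til p^Z_\infty(z_1^*,z_2^*)$, which is (\ref{invariant-psudo}). The probabilistic interpretation is then immediate from the definition of the transition density under $\PP_2$. The one point requiring a word of care is justifying the change of variables and Fubini, which is fine because $p_\infty$ is a bounded continuous density on $\lin\D$ and the series defining $p_t$ converges uniformly on $[t_0,\infty)\times\lin\D\times\lin\D$ by Lemma \ref{asym-lemma}.

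For part (ii), formula (\ref{P[T>t]}) follows by integrating the backbone identity over $(z_1^*,z_2^*)\in(0,\pi)^2$: $\PP_2[T^u>t] = \int_0^\pi\!\int_0^\pi \til p^Z_t((z_1,z_2),(z_1^*,z_2^*))\,dz_2^*\,dz_1^* = e^{-\alpha_0 t}{\cal Z}G^u(z_1,z_2)\int_0^\pi\!\int_0^\pi \til p^Z_\infty(z_1^*,z_2^*)(1+O(e^{-(2+\frac\kappa8)t}))\,dz_2^*\,dz_1^*$, and since $\til p^Z_\infty$ is a probability density (normalized precisely so) and the $O$-term is uniform, this equals ${\cal Z}G^u(z_1,z_2)e^{-\alpha_0 t}(1+O(e^{-(2+\frac\kappa8)t}))$. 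Then (\ref{tilptT>t}) is obtained by dividing the backbone identity for $\til p^Z_t$ by the just-derived expression for $\PP_2[T^u>t]$: the $e^{-\alpha_0 t}{\cal Z}G^u(z_1,z_2)$ factors cancel, leaving $\til p^Z_t((z_1,z_2),(z_1^*,z_2^*)) = \PP_2[T^u>t]\,\til p^Z_\infty(z_1^*,z_2^*)\,\frac{1+O(e^{-(2+\frac\kappa8)t})}{1+O(e^{-(2+\frac\kappa8)t})}$, and the ratio of the two $(1+O(\cdot))$ factors is again $1+O(e^{-(2+\frac\kappa8)t})$ for $t$ bounded below (so the denominator stays bounded away from $0$). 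Throughout, the key bookkeeping point — and the only thing resembling an obstacle — is tracking that \emph{every} $O$-error is uniform in the starting point $(z_1,z_2)$ and in $(z_1^*,z_2^*)$; this is exactly the content of the uniform bound in Lemma \ref{asym-lemma} (the error bound there is relative to $p_\infty(x^*,y^*)$ with a constant depending only on $\kappa,t_0$), so once that is invoked correctly there is no real difficulty, just careful cancellation. I would also remark that no part of this requires knowing whether $T^u<\infty$ $\PP_2$-a.s.; that fact is a corollary (let $t\to\infty$ in (\ref{P[T>t]})), not an input.
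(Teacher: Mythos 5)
Your proposal is correct and follows essentially the same route as the paper: part (i) via the invariance relation (\ref{invar}) transported to $Z$-coordinates, and part (ii) via the identity $\til p^Z_t = e^{-\alpha_0 t}\,{\cal Z}\,G^u(z_1,z_2)\,\til p^Z_\infty(z_1^*,z_2^*)(1+O(e^{-(2+\frac\kappa 8)t}))$ obtained by combining Lemma \ref{Lemma-til-p} with the uniform asymptotics of Lemma \ref{asym-lemma} and the normalizations (\ref{calZ},\ref{tilpZinfty}), then integrating over the target variables and dividing. The only cosmetic slip is the momentary miswriting of $\tfrac1{\cal Z}p^Z_\infty(z_1^*,z_2^*)$ in part (i), which you immediately correct.
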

\begin{proof}
	Part (i) follows easily from (\ref{invar}). For part (ii), suppose $(Z^u_1,Z^u_2)$  starts from $(z_1,z_2)$. Using Lemmas \ref{asym-lemma}, \ref{Lemma-til-p} and formulas ( \ref{calZ},\ref{tilpZinfty}), we get
	$$\PP_{2}[T^u>t]=\int_0^\pi\!\int_0^\pi \til p^Z_t((z_1,z_2),(z_1^*,z_2^*))dz_1^*dz_2^*$$
	$$=\int_0^\pi\!\int_0^\pi e^{-\alpha_0t}  p^Z_t((z_1,z_2),(z_1^*,z_2^*)) \frac{G^u(z_1,z_2)}{G^u(z_1^*,z_2^*)}dz_1^*dz_2^*$$
	$$=\int_0^\pi\!\int_0^\pi e^{-\alpha_0t}  p^Z_\infty(z_1^*,z_2^*)(1+O(e^{-(2+\frac\kappa 8)t})) \frac{G^u(z_1,z_2)}{G^u(z_1^*,z_2^*)}dz_1^*dz_2^*$$
	$$={\cal Z} G^u(z_1,z_2) e^{-\alpha_0t}(1+O(e^{-(2+\frac\kappa 8)t})),$$
	which is (\ref{P[T>t]}); and
	$$ \til p^Z_t((z_1,z_2),(z_1^*,z_2^*))=e^{-\alpha_0t}  p^Z_t((z_1,z_2),(z_1^*,z_2^*)) \frac{G^u(z_1,z_2)}{G^u(z_1^*,z_2^*)}$$
	$$=e^{-\alpha_0t}  p^Z_\infty(z_1^*,z_2^*)(1+O(e^{-(2+\frac\kappa 8)t})) \frac{G^u(z_1,z_2)}{G^u(z_1^*,z_2^*)}$$
	$$=e^{-\alpha_0 t} {\cal Z} \til p^Z_\infty(z_1^*,z_2^*)(1+O(e^{-(2+\frac\kappa 8)t})) G^u(z_1,z_2),$$
	which together with (\ref{P[T>t]}) implies (\ref{tilptT>t}).
\end{proof}

\section{Proofs of Main Theorems} \label{section-proofs}
We will prove the main theorems of the paper in this section. We will   need the Domain Markov Property for $2$-SLE in the following form.

\begin{Lemma}
	 Let $(\eta_1,\eta_2)$ be a $2$-SLE$_\kappa$ in a simply connected domain $D$ with link pattern $(a_1\to b_1;a_2\to b_2)$. Suppose, for $j=1,2$, $\eta_j$ is parametrized by the chordal capacity viewed from $b_j$ (determined by a conformal map from $D$ onto $\HH$ that takes $b_j$ to $\infty$). Note that the lifetime of the parametrized $\eta_1$ and $\eta_2$ are both $\infty$. Let $(\F^j_t)_{t\ge 0}$ be the filtration generated by $\eta_j$, $j=1,2$, which together generate a  separable $\cal Q$-indexed filtration $(\F_{ \ulin t})_{\ulin t\in\cal Q}$. Let $\ulin T=(T_1,T_2)$ be a finite $(\F_{ \ulin t})$-stopping time. Let $D_{\ulin T}^j$ denote the connected component of $D\sem (\eta_1([0,T_1])\cup\eta_2([0,T_2]))$ whose boundary contains $b_j$, $j=1,2$ Then
	\begin{enumerate}
		\item[(i)] Conditioning on $\F_{ \ulin T}$ and the event  that $D_{\ulin T}^1=D_{\ulin T}^2=:D_{\ulin T}$ and $\eta_1(T_1)\ne \eta_2(T_2)$,  $\eta_1|_{[T_1,\infty]}$ and $\eta_2|_{[T_2,\infty]})$ form a  $2$-SLE$_\kappa$ in $D_{\ulin T}$ with link pattern $(\eta_1(T_1)\to b_1;\eta_2(T_2)\to b_2)$.
		\item[(ii)] Conditioning on $\F_{ \ulin T}$ and the event that $D_{\ulin T}^1\ne D_{\ulin T}^2$,   $\eta_j|_{[T_j,\infty]}$ is a chordal SLE$_\kappa$ curve  in $D_{\ulin T}^j$ from $\eta_j(T_j)$ to $b_j$, $j=1,2$, and $\eta_1|_{[T_1,\infty]}$ and $\eta_2|_{[T_2,\infty]}$ are independent.
	\end{enumerate} \label{DMP-bi-chordal}
\end{Lemma}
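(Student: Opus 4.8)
The plan is to reduce everything to the one-curve Domain Markov Property (DMP) for chordal SLE$_\kappa$ together with the already-stated DMP for $2$-SLE (the one proved just after the definition of $2$-SLE$_\kappa$, where we grow one curve to a stopping time $T$ for $\eta_1$). The subtlety here is that $\ulin T=(T_1,T_2)$ is a stopping time for the two-parameter filtration, not a stopping time for a single curve, so we cannot directly apply a one-variable DMP; we have to peel off the two curves one at a time and handle the measurability bookkeeping via the machinery in Section~\ref{section-two-parameter} (Lemmas~\ref{T<S}, \ref{measurable}, \ref{separable-lemma}).

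First I would recall that since $(\F_{\ulin t})$ is separable, generated by $(\F^1_{t_1})$ and $(\F^2_{t_2})$, each $T_j$ is a stopping time for the enlarged filtration $(\F^j_t\vee\F^{3-j}_\infty)_{t\ge 0}$, and $\F_{\ulin T}\subset\F^1_{T_1}\vee\F^2_{\infty}$ (and symmetrically). The key structural input is the commutation/DMP for $2$-SLE already recorded in the excerpt: conditionally on $\eta_2$ entirely (i.e.\ on $\F^2_\infty$), the curve $\eta_1$ is a chordal SLE$_\kappa$ in the appropriate complementary component of $D\sem\eta_2$, so $T_1$, being an $(\F^1_t\vee\F^2_\infty)$-stopping time, is an ordinary stopping time for that chordal SLE, and the one-curve DMP applies: conditionally on $\F^1_{T_1}\vee\F^2_\infty$ and on the event that $\eta_1$ has not yet reached $b_1$ nor disconnected $b_1$ (automatic here since $\ulin T$ is finite and the parametrization is by capacity from $b_j$), the curve $\eta_1|_{[T_1,\infty]}$ is a chordal SLE$_\kappa$ from $\eta_1(T_1)$ to $b_1$ in the component of $(D\sem\eta_2)\sem\eta_1([0,T_1])$ containing $b_1$ on its boundary. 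Then I would integrate this over the conditional law of $\eta_2$ given $\F^1_{T_1}$ — but for that I instead run the symmetric argument: condition on $\F^1_\infty$, use that $\eta_2$ is a chordal SLE$_\kappa$ in the complement of $\eta_1$, and that $T_2$ is a stopping time for it.

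The cleanest route is: fix the event $E_{\ne}$ that $\eta_1(T_1)\ne\eta_2(T_2)$ and $D^1_{\ulin T}=D^2_{\ulin T}=:D_{\ulin T}$. On this event, neither curve's terminal point lies on the ``wrong'' side, and I claim that conditionally on $\F_{\ulin T}$ restricted to $E_{\ne}$, the pair $(\eta_1|_{[T_1,\infty]},\eta_2|_{[T_2,\infty]})$ has the $2$-SLE$_\kappa$ law in $D_{\ulin T}$ with link pattern $(\eta_1(T_1)\to b_1;\eta_2(T_2)\to b_2)$. To verify this I would check the defining property of $2$-SLE: condition additionally on $\eta_2|_{[T_2,\infty]}$ (equivalently on $\F^1_{T_1}\vee\F^2_\infty$ intersected with $E_{\ne}$), and show $\eta_1|_{[T_1,\infty]}$ is a chordal SLE$_\kappa$ in the component of $D_{\ulin T}\sem\eta_2|_{[T_2,\infty]}$ bounded by $\eta_1(T_1)$ and $b_1$. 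This follows by first applying the $2$-SLE DMP ``in the $\eta_1$ variable'' at the $(\F^1_t\vee\F^2_\infty)$-stopping time $T_1$ to see $\eta_1|_{[T_1,\infty]}$ is chordal SLE$_\kappa$ from $\eta_1(T_1)$ to $b_1$ in the $b_1$-component of $D\sem(\eta_1([0,T_1])\cup\eta_2)$, and then noting that, on $E_{\ne}$, this component coincides with the $b_1$-component of $D_{\ulin T}\sem\eta_2|_{[T_2,\infty]}$ because $\eta_2([0,T_2])$ is already ``used up'' in the definition of $D_{\ulin T}$. Since this holds after conditioning on $\eta_2|_{[T_2,\infty]}$, and symmetrically with the roles of $1$ and $2$ swapped, the conditional law of the pair given $\F_{\ulin T}\cap E_{\ne}$ is exactly $2$-SLE$_\kappa$. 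For part (ii), on the event $E_{\text{sep}}$ that $D^1_{\ulin T}\ne D^2_{\ulin T}$, the set $\eta_1([0,T_1])\cup\eta_2([0,T_2])$ separates $b_1$ from $b_2$; conditioning on $\F^2_\infty$ and applying the one-curve DMP gives $\eta_1|_{[T_1,\infty]}$ a chordal SLE$_\kappa$ in $D^1_{\ulin T}$, and since $D^1_{\ulin T}$ no longer has $b_2$, $a_2$, or any part of $\eta_2|_{[T_2,\infty]}$ on its relevant boundary, the conditional law of $\eta_1|_{[T_1,\infty]}$ does not depend on $\eta_2|_{[T_2,\infty]}$ beyond $\F^2_{T_2}$-information, which (combined with the symmetric statement) gives conditional independence.

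The main obstacle I anticipate is not the SLE input but the measure-theoretic care needed to apply a one-variable DMP at $T_j$ when the conditioning $\sigma$-field is $\F_{\ulin T}$ rather than $\F^j_{T_j}$: one must justify that $\{\ulin T$ finite$\}\cap E_{\ne}\in\F_{\ulin T}$ (using Lemmas~\ref{T<S} and \ref{measurable} applied to the continuous adapted processes $\eta_1(T_1),\eta_2(T_2)$ and to the event that the two complementary components agree, which is measurable w.r.t.\ the hulls), and that conditioning on $\F_{\ulin T}$ and then on $\F^2_\infty$ composes correctly with the chordal DMP at the $(\F^1_t\vee\F^2_\infty)$-stopping time $T_1$. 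A secondary point is to confirm that ``$\ulin T$ finite'' rules out the degenerate possibilities in the one-curve DMP (reaching or disconnecting $b_j$), so that the DMP applies cleanly with no exceptional null set beyond the already-excised events. Once these bookkeeping points are settled, both statements reduce to a diagram-chase through the definitions, and I would present the proof as: (1) reduce to checking the $2$-SLE defining property on $E_{\ne}$ after extra conditioning; (2) invoke the $2$-SLE DMP in one variable plus identification of complementary components; (3) symmetrize; (4) handle (ii) by the same method, reading off independence from the fact that the separated components are disjoint and each ``forgets'' the other curve's future.
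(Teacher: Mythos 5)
Your proposal is correct and follows essentially the same route as the paper: condition on $\F^2_\infty$ so that $\eta_1$ is a chordal SLE$_\kappa$ in a component of $D\sem\eta_2$, note that $T_1$ is a stopping time for $(\F^1_{t}\vee\F^2_\infty)_{t\ge 0}$ with $\F_{\ulin T}\vee\F^2_\infty$ contained in the corresponding stopped $\sigma$-field, apply the one-curve DMP, identify the complementary component as $\F_{\ulin T}\vee\F^2_\infty$-measurable, symmetrize, and read off (i) from the defining property of $2$-SLE and (ii) from the fact that on the separated event each component forgets the other curve's future. The measurability points you flag are exactly the ones the paper handles, via the inclusion $\F_{\ulin T}\vee\F^2_\infty\subset\F^1_{T_1}\vee\F^2_\infty$ (as a stopped $\sigma$-field) and the observation that $\F_{\ulin T}\vee\F^2_\infty$ is generated by $\F_{\ulin T}$ and $\eta_2|_{[T_2,\infty]}$.
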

\begin{proof}
	By the property of $2$-SLE$_\kappa$, conditioning on  $\F^2_\infty$, $\eta_1$ is a chordal SLE$_\kappa$ curve from $a_1$ to $b_1$ in a connected component of $D\sem \eta_2$. Let $\F^{(2,\infty)}_{t_1}=\F^1_{t_1}\vee \F^2_\infty$. Then we get a filtration $(\F^{(2,\infty)}_{t_1})_{t_1\ge 0}$. Since for any $t_1\ge 0$,  $\{T_1\le t_1\}=\bigcup_{n\in\N} \{\ulin T\le (t_1,n)\}$, we see that $T_1$ is an $(\F^{(2,\infty)}_{t_1})$-stopping time. If $A\in\F_{\ulin T}$, then from
$A\cap \{T_1\le t_1\}=\bigcup_{n\in\N} A\cap \{\ulin T\le (t_1,n)\}$ , $t_1\ge 0$,
we see that $A\in \F^{(2,\infty)}_{T_1}$. Thus, $\F_{\ulin T}\vee \F^2_\infty\subset  \F^{(2,\infty)}_{T_1}$.

By the DMP of chordal SLE$_\kappa$, conditioning on $\F^{(2,\infty)}_{T_1}$,  $\eta_1|_{[T_1,\infty]}$ has the law of a chordal SLE$_\kappa$ curve from $\eta_1(T_1)$ to $b_1$ in a connected component of $D\sem (\eta_1([0,T_1])\cup \eta_2)$, which is denoted by $D_{T_1,\infty}$.
Note that the triple $(D_{T_1,\infty};\eta_1(T_1),b_1)$ is measurable w.r.t.\ $\F_{ \ulin T}\vee \F^2_\infty$. Since  $\F_{ \ulin T}\vee \F^2_\infty\subset\F^{(2,\infty)}_{T_1}$, we conclude that, conditioning on $\F_{ \ulin T}\vee \F^2_\infty$, $\eta_1|_{[T_1,\infty]}$ also has the law of a chordal SLE$_\kappa$ in $D_{T_1,\infty}$ from $\eta_1(T_1)$ to $b_1$.
Since $\F_{ \ulin T}\vee \F^2_\infty$ agrees with the $\sigma$-algebra generated by $\F_{ \ulin T}$ and $\eta_2|_{[T_2,\infty]}$,  we can say that, conditioning first on $\F_{ \ulin T}$ and then on $\eta_2|_{[T_2,\infty]}$, $\eta_1|_{[T_1,\infty]}$ has the law of a chordal SLE$_\kappa$ in $D_{T_1,\infty}$ from $\eta_1(T_1)$ to $b_1$. Similarly, conditioning first on $\F_{ \ulin T}$ and then on $\eta_1|_{[T_1,\infty]}$, $\eta_2|_{[T_2,\infty]}$ has the law of a chordal SLE$_\kappa$ in $D_{\infty,T_2}$. On the event that $D_{\ulin T}^1\ne D_{\ulin T}^2$, $D_{\infty,T_j}$ does not depend on $\eta_{3-j}([T_{3-j},\infty])$, so $\eta_1|_{[T_1,\infty]}$ and $\eta_2|_{[T_2,\infty]}$ are conditionally independent given $\F_{\ulin T}$ on this event. This is (ii). On the event that $D_{\ulin T}^1=D_{\ulin T}^2=:D_{\ulin T}$ and $\eta_1(T_1)\ne \eta_2(T_2)$, the conditional joint law of $\eta_1|_{[T_1,\infty]}$ and $\eta_2|_{[T_2,\infty]}$ given $\F_{\ulin T}$ agrees with that of the $2$-SLE$_\kappa$ in $D_{\ulin T}$ with link pattern $(\eta_1(T_1)\to b_1;\eta_2(T_2)\to b_2)$. So we get (i).
%
\end{proof}

\begin{proof} [Proof of Theorem \ref{main-thm1}]
We first work on (\ref{main-est-1}).
  By Koebe's distortion theorem, it suffices to prove the theorem for $D=\D$ and $z_0=0$. By symmetry, we may assume that   $a_j=e^{iw_j}$ and $b_j=e^{iv_j}$, $j=1,2$, and $w_1>v_1>w_2>v_2>w_1-2\pi$.  We use $p(w_1,v_1,w_2,v_2;r)$ to denote the probability that both $\ha \eta_1$ and $\ha \eta_2$ have distance less than $r$ from $0$. Since $G_{\D;e^{iw_1},e^{iv_1};e^{iw_2},e^{iv_2}}(0)$ agrees with the $G(w_1,v_1;w_2,v_2)$ defined by (\ref{G(WV)}), it suffices to show that, for some constant $C_0\in(0,\infty)$,
\BGE p(w_1,v_1,w_2,v_2;r)= C_0G(w_1,v_1;w_2,v_2) r^{\alpha_0} (1+O (r^{\beta_0})),\quad \mbox{as } r\to 0^+.\label{Green-disk}\EDE

  For $j=1,2$, suppose $\ha \eta_j$ is oriented from $a_j$ to $b_j$, and let $\eta_j$ be the part of $\ha \eta_j$ from $a_j$ up to $b_j$ or the first time that $\ha \eta_j$ separates $0$ from any of $b_j,a_{3-j},b_{3-j}$ if such time exists.
    Then we may parametrize $\eta_1$ and $\eta_2$ by the radial capacity viewed from $0$ such that they are radial Loewner curves with lefetime $\til T_1$ and $\til T_2$ driven by some functions $\ha w_1$ and $\ha w_2$ with initial values $w_1$ and $w_2$, respectively. Then the law of $(\ha w_1,\ha w_2)$ is  $\PP_{2}^{w_1,v_1;w_2,v_2}$ as defined in Section \ref{section-Probability-measures}.


 We use the symbols in  Section \ref{section-Ensemble}. Now we write $K_{\ulin t}$ and $g_{\ulin t}$ for $K(t_1,t_2)$ and $g((t_1,t_2),\cdot)$, and let $D_{\ulin t}=\D\sem K_{\ulin t}$. Recall that $g_{\ulin t} $ maps $D_{\ulin t} $ conformally onto $\D$, fixes $0$, and has derivative $e^{\mA(\ulin t)}$ at $0$. Moreover, $g_{\ulin t}( \eta_j(t_j))=e^{iW_j(\ulin t)}$ and $g_{\ulin t}(e^{iv_j})=e^{iV_j(\ulin t)}$, $j=1,2$. Suppose $\ulin T=(T_1,T_2)$ is an $(\F_{\ulin t})_{\ulin t\in\cal Q}$-stopping time such that $T_j<\til T_j$, $j=1,2$. Then $\ulin T$ corresponds to a stopping time w.r.t.\ the ${\cal Q}$-indexed filtration  generated by $\ha \eta_1,\ha \eta_2$, which are parametrized by chordal capacities viewed from $b_1,b_2$, respectively. By Lemma \ref{DMP-bi-chordal}, conditionally on $\F_{\ulin T}$ and the event that $\ulin T\in\cal D$, the $g_{\ulin T}$-images of the parts of $\ha \eta_j$ from $\eta_j(u_j(t_0))$ to $e^{iv_j}$, $j=1,2$, together form a $2$-SLE$_\kappa$ in $\D$ with link pattern $(e^{iW_1(\ulin T)}\to e^{iV_1(\ulin T)};e^{iW_2(\ulin T)}\to e^{iV_2(\ulin T)})$.

Suppose that $v_1-v_2=\pi$ so that the time curve $\ulin u$ in Section \ref{section-Time-curve} can be defined, and we may use the results and symbols there. 
Fix $r\in(0,1/4)$. Suppose that it happens that $\dist(0,\ha \eta_j)<r$, $j=1,2$.  Then the parts of $\ha \eta_1$ and $\ha \eta_2$ up to their respective hitting times at  $\{|z|=r\}$ do not intersect. Because if they did intersect, then they together could disconnect $e^{iv_1}$ and $e^{iv_2}$ in $\D$, and the rest parts of $\ha \eta_1$ and $\ha \eta_2$ would grow in different domains, and could not both visit the disc $\{|z|<r\}$, which is a contradiction. Thus, for $j=1,2$, the above part of $\ha \eta_j$ does not disconnect $0$ from any of $e^{iv_j},e^{iw_{3-j}},e^{iv_{3-j}}$, and so belongs to $\eta_j$. Let $\tau_j$ be the first hitting time of $\eta_j$ at $\{|z|=r\}$, $j=1,2$. Then  $(\tau_1,\tau_2)\in\cal D$. By Koebe's $1/4$ theorem, we get $\tau_j\ge -\log(4r)$, $j=1,2$. Recall that $\theta(\tau_1,0)<\pi<\theta(0,\tau_2)$. So there is $\ulin s=(s_1,s_2)\in \{(\tau_1,t_2):0\le t_2\le \tau_2\}\cup\{(t_1,\tau_2):0\le t_1\le \tau_1\}$ such that $\theta(\ulin s)=\pi$. This implies that $\mA(s_1,s_2)<T^u$ and $s_j=u_j(\mA(s_1,s_2))$, $j=1,2$. Using (\ref{mA-est}) we get $T^u>s_1\vee s_2\ge -\log(4r)$.

Now fix $t_0\in[0,-\log(4r)]$. Then $\{\dist(0,\ha \eta_j)<r,j=1,2\}\subset \{T^u> t_0\}$. When $T^u>t_0$ happens, since $u_j(t_0)\le t_0$, by Koebe's $1/4$ theorem, $\dist(0,\eta_j[0,u_j(t_0)])\ge r$, $j=1,2$. Thus, $\dist(0,\ha \eta_j)<r$, $j=1,2$, if and only if $T^u>t_0$ and the parts of $\ha \eta_j$ after $\eta_j(u_j(t_0))$, $j=1,2$, both visit the disc $\{|z|<r\}$. Suppose $T^u>t_0$ does happen. Let $R_1<R_2\in(0,1)$ be such that $\frac{e^{-t_0}R_1}{(1-R_1)^2}=\frac{e^{-t_0}R_2}{(1+R_2)^2}=r$. Since $(g^u_{t_0})'(0)=e^{\mA(\ulin u(t_0))}=e^{t_0}$ and $r\le \frac 14 e^{-t_0}$, by Koebe's distortion theorem, $\{|z|<r\}\subset D_{\ulin u(t_0)}$, and
\BGE \{|z|<R_1\}\subset g^u_{t_0}(\{|z|<r\})\subset \{|z|<R_2\}.\label{R1R2}\EDE
By rotation symmetry, there is a function $p(z_1,z_2;r)$ such that $p(w_1,v_1,w_2,v_2;r)=p(w_1-v_1,w_2-v_2;r)$ if $v_1-v_2=\pi$. 
From the conditional joint law of the $ g_{\ulin u(t_0)}$-images of the parts of $\ha \eta_j$ after $\eta_j(u_j(t_0))$, $j=1,2$, given $\F^u_{t_0}$, and the facts that $V_1^u(t_0)-V_2^u(t_0)=\pi$ and $Z_j^u=W_j^u-V_j^u$, $j=1,2$,  we get
\BGE p(Z_1^{u}(t_0),Z_2^u(t_0);R_1)\le \PP[\dist(0,\ha \eta_j)<r,j=1,2|\F^u_{t_0},T^u>t_0]\le p(Z_1^{u}(t_0),Z_2^u(t_0);R_2).\label{p-condition}\EDE

We will first find the asymptotic of  $p(r):=\int_0^\pi\int_0^\pi p(z_1,z_2;r)\til p^Z_\infty(z_1,z_2)dz_1dz_2$  as $r\to 0^+$. Such  $p(r)$ is the probability that the two curves $\ha \eta_1$ and $\ha \eta_2$ in a $2$-SLE$_\kappa$ in $\D$ with link pattern $(e^{iz_1}\to 1;-e^{iz_2}\to -1)$  both get within distance $r$ from $0$, where $z_1,z_2$ are random numbers in $(0,\pi)$ with joint density $\til p^Z_\infty$. From (\ref{P[T>t]}) we know that,  $\PP[T^u>t_0]=e^{-\alpha_0t_0}$, and conditioning on $T^u>t_0$, $(Z^u_1(t_0),Z^u_2(t_0))$ also has joint density $\til p^Z_\infty$.

Suppose $0<t<T^u$. Let $d_j=\dist(0,\eta_j([0,u_j(t)]))$. Since $\mA(u_1(t),u_2(t))=t$, by Schwarz Lemma, we have $d_1\wedge d_2\le e^{-t}$. By symmetry we may assume that $d_1\le d_2$. Since $\theta(u_1(t),u_2(t))=\pi$, we know that the harmonic measure of the union of $\eta_2([0,u_2(t)])$ and the subarc of $\TT$ between $e^{iv_1 }$ and $e^{iv_1 }$ that contains $e^{iw_2 }$ in $\D\sem K(u_1(t),u_2(t))$ viewed from $0$ is exactly $1/2$. Using Beurling estimate, we get $1/2\le 2(d_1/d_2)^{1/2}$, which implies that $d_2\le 16 d_1$. Since $d_1\le e^{-t}$, we get $d_1,d_2\le 16e^{-t}$, and so $\dist(0,\eta_j)\le d_j\le 16 e^{-t}$, $j=1,2$. This means that $p(r)\ge \PP[T^u>t]=e^{-\alpha_0t}>0$ if $r>16 e^{-t}$. So $p$ is positive.
By (\ref{p-condition}) we get
$$e^{-\alpha_0t_0}p(R_1)\le p(r)\le e^{-\alpha_0t_0}p(R_2),\quad \mbox{if }\frac{e^{-t_0}R_1}{(1-R_1)^2}=\frac{e^{-t_0}R_2}{(1+R_2)^2}=r.$$
 Let $q(r)=r^{-\alpha_0}p(r)$. Suppose $r,R\in(0,1)$ satisfy that $r<\frac{R}{(1+R)^2}$. By choosing $t_0>0$ such that $e^{t_0}=\frac{R/r}{(1+R)^2}$, we conclude from the above formula that $p(r)\le e^{-\alpha_0 t_0}p(R)=(\frac{R/r}{(1+R)^2})^{-\alpha_0} p(R)$. Thus, $q(r)\le (1+R)^{2\alpha_0} q(R)$. Similarly, by choosing $t_0>0$ such that $e^{t_0}=\frac{R/r}{(1-R)^2}$, we get $q(r)\ge (1-R)^{2\alpha_0} q(R)$. So we have
\BGE (1-R)^{2\alpha_0} q(R)\le q(r)\le (1+R)^{2\alpha_0} q(R),\quad \mbox{if }r<\frac{R}{(1+R)^2}.\label{Rr}\EDE
Thus, $\lim_{r\to 0^+}\log (q(r))$ converges to a finite number, which implies that $\lim_{r\to 0^+} q(r)$ converges to a finite positive number. Let $L$ denote the limit. Fixing $R\in(0,1)$ and sending $r\to 0^+$ in (\ref{Rr}), we get $L(1+R)^{-2\alpha_0}\le q(R)\le L(1-R)^{-2\alpha_0}$. So $p(r)=Lr^{\alpha_0}(1+O(r))$ as $r\to 0$ for some $L\in(0,\infty)$.

Next, we  find the asymptotic of $p(z_1,z_2;r)$ as $r\to 0^+$ for any $z_1,z_2\in(0,\pi)$.   From Lemma \ref{property-til-p} we know that, for any $t_0>0$, $\PP[T^u>t_0]={\cal Z} G^u(z_1,z_2) e^{-\alpha_0t_0}(1+O(e^{\lambda_1t_0}))$, and conditionally on $\F^u_{t_0}$ and $T^u>t_0$, the joint density of $(Z^u_1(t_0),Z^u_2(t_0))$ is $\til p^Z_\infty(z_1^*,z_2^*)(1+O(e^{\lambda_1t_0}))$, where $\lambda_1=-2-\frac{\kappa}{8}$. Fix $r\in(0,1/4)$ and choose $t_0>0$ such that $t_0<-\log(4r)$. We now still have (\ref{R1R2}). Note that $R_j=e^{t_0}r(1+O(e^{t_0}r))$, $j=1,2$, if $e^{t_0}r$ is small. From (\ref{p-condition}) we get
$$p(z_1,z_2;r)={\cal Z} G^u(z_1,z_2) e^{-\alpha_0t_0} (1+O(e^{\lambda_1t_0}))p(e^{t_0}r(1+O(e^{t_0}r)))$$
$$={\cal Z}L G^u(z_1,z_2) e^{-\alpha_0t_0} [e^{t_0}r(1+O(e^{t_0}r))]^{\alpha_0}(1+O(e^{\lambda_1t_0}))(1+O(e^{t_0}r))$$
$$={\cal Z}L G^u(z_1,z_2) r^{\alpha_0}(1+O(e^{\lambda_1t_0})+O(e^{t_0}r)).$$ Since $\beta_0=\frac{-\lambda_1}{1-\lambda_1}$,
letting $C_0={\cal Z}L$ and choosing $e^{t_0}$ such that $e^{t_0}=r^{\frac{-1}{1-\lambda_1}}$, we get
$$p(z_1,z_2;r)=C_0G^u(z_1,z_2)r^{\alpha_0} (1+O(r^{\beta_0})).$$
This means that we obtain (\ref{Green-disk}) in the case that $v_1-v_2=\pi$.

Finally, we consider the case that $\theta(0,0)=v_1-v_2\ne\pi$.  First, suppose that $\theta(0,0)<\pi$. Recall that $\theta(t_1,t_2)$ is increasing in $t_2$. Let $\tau_2$ be the first $t_2$ such that $(0,t_2)\in\cal D$ and $\theta(0,t_2)=\pi$, if such time exists; otherwise, let $\tau_2$ be the lifetime $\til T_2$ of $\eta_2$. Then $\tau_2$ is an $(\F^2_t)$-stopping time. From (\ref{pa-j-theta'}) we know that $\pa_2\theta(0,t_2)\ge 2\cot(\theta(0,t_2)/4)$, which implies that $\cos(\theta(0,t_2)/4)\le e^{-t/2}\cos(\theta(0,0)/4)< e^{-t/2}$. If $\log(2)<\til T_2$, then $\cos(\theta(0,\log(2))/4)< 1/\sqrt 2$, which implies that $\theta(0,\log(2))>\pi$, and so $\tau_2<\log(2)$. If $\log(2)\ge \til T_2$, we then have $\tau_2\le \til T_2\le \log(2)$. Thus, in both cases, $\tau_2$ is bounded above by $\log(2)$, and we get an $(\F_{ \ulin t})$-stopping time $(0,\tau_2)$.

Moreover, if $(0,\tau_2)\not\in\cal D$, then $\tau_2=\til T_2$, which means that the conformal radius of $\D\sem \ha \eta_2$ viewed from $0$ is $e^{-\til T_2}\ge 1/2$, and from Koebe's $1/4$ theorem, we get $\dist(0,\ha \eta_2)\ge 1/8$. Thus, if $\dist(0,\ha \eta_2)< 1/8$, then $(0,\tau_2) \in\cal D$, and we get $V_1(0,\tau_2)-V_2(0,\tau_2)=\pi$.
Conditional on $\F_{(0,\tau_2)}$ and the event that $(0,\tau_2)\in\cal D$, the $g_{
(0,\tau_2)}$-image of $\ha \eta_1$ and the part of $\ha \eta_2$ after $\eta_2(\tau_2)$ form a $2$-SLE$_\kappa$ in $\D$ with link pattern $(e^{iW_1(0,\tau_2)}\to e^{iV_1(0,\tau_2)};e^{iW_2(0,\tau_2)}\to e^{iV_2(0,\tau_2)})$. Since $V_1(0,\tau_2)-V_2(0,\tau_2)=\pi$,
by Koebe distortion theorem and the result in the case $v_1-v_2=\pi$, we get that, if $r<1/8$,
$$p(w_1,v_1,w_2,v_2;r)=\EE[{\bf 1}_{\{(0,\tau_2)\in\cal D\}}p(Z_1(0,\tau_2),Z_2(0,\tau_2); e^{\tau_2}r(1+O(r)))]$$
$$=C_0\EE[{\bf 1}_{\{(0,\tau_2)\in\cal D\}}  e^{\alpha_0\mA(0,\tau_2)} G(W_1,V_1;W_2,V_2)|_{(0,\tau_2)}]  r^{\alpha_0}(1+O(r^{\beta_0}))$$
$$=C_0 G(w_1,v_1;w_2,v_2) r^{\alpha_0}(1+O(r^{\beta_0})),$$
where the last step follows from (\ref{EchG}). The proof of the case that $\theta(0,0)>\pi$ is similar.
So we have proved (\ref{Green-disk}) in all cases, which implies (\ref{main-est-1}).

Finally, from (\ref{main-est-1}) we know that there are constants $\rho\in(0,1)$ and $C_1>0$ such that if $\frac rR<\rho$, then $\PP[\dist(z_0,\ha \eta_j)<r,j=1,2]\le C_1G_{D;a_1,b_1;a_2,b_2}(z_0) r^{\alpha_0}$. Using (\ref{Green-bound}) we then get (\ref{main-est-2}) in the case  $\frac rR<\rho$. Since $\PP[\dist(z_0,\ha \eta_j)<r,j=1,2]\le 1$, we get (\ref{main-est-2}) for all $r>0$.
\end{proof}

\begin{proof} [Proof of Theorem \ref{main-thm2}]
  The proof is almost the same as that of the previous theorem except that we need a new way to prove that $\PP[\dist(z_0,\ha \eta_1\cap\ha \eta_2)<r]>0$ for all $r\in(0,R)$. To prove this, first note that from the previous theorem, the probability of the event $E_r$ that both $\ha \eta_1$ and $\ha \eta_2$ visit the disc $\{|z-z_0|<r\}$ is positive, and when this event happens, the connected component of $D\sem \ha \eta_1$ whose boundary contains $a_2,b_2$, denoted by $D_2$, contains a part of the circle $\{|z-z_0|=r\}$ but not the whole circle. Thus, $\pa D_2\cap \{|z-z_0|<r\}$ is not empty. Since conditionally on $\ha \eta_1$, $\ha \eta_2$ is a chordal SLE$_\kappa$ curve in $D_2$, and $\kappa\in(4,8)$, the conditional probability that $\ha \eta_2$ intersects $\pa D_2\cap \{|z-z_0|<r\}$ given $\ha \eta_1$ and $E_r$ is positive, and when  $\ha \eta_2$ intersects $\pa D_2\cap \{|z-z_0|<r\}$, we have $\dist(z_0,\ha \eta_1\cap\ha \eta_2)<r$. So we get the desired positiveness.
\end{proof}

\end{document}